\documentclass[11pt]{amsart}
\pdfoutput=1
\usepackage[margin=1in]{geometry}
\usepackage{amsthm,amsmath,amssymb,enumitem,hyperref,fontenc}
\usepackage[utf8]{inputenc}

\setcounter{tocdepth}{1}

\usepackage{tikz-cd}
\usepackage{stackengine}
\usepackage{trimclip}

\newtheorem{theorem}{Theorem}[section]
\newtheorem{definition-theorem}[theorem]{Definition-Theorem}
\newtheorem{lemma}[theorem]{Lemma}
\newtheorem{corollary}[theorem]{Corollary}
\newtheorem{proposition}[theorem]{Proposition}

\theoremstyle{definition}
\newtheorem{definition}[theorem]{Definition}
\newtheorem{example}[theorem]{Example}

\newtheorem{remark}[theorem]{Remark}
\newtheorem{notation}[theorem]{Notation}

\newcommand{\xto}[1]{\xrightarrow{#1}}
\newcommand{\cover}{\lessdot}
\newcommand{\cP}{\mathcal{P}}
\newcommand{\cA}{\mathcal{A}}
\newcommand{\cQ}{\mathcal{Q}}
\newcommand{\cL}{\mathcal{L}}
\newcommand{\cX}{\mathcal{X}}

\newcommand{\cS}{\mathcal{S}}
\newcommand{\cT}{\mathcal{T}}
\newcommand{\cE}{\mathcal{E}}
\newcommand{\cF}{\mathcal{F}}

\newcommand{\field}{\mathbb{K}}

\newcommand{\fG}{\mathfrak{G}}
\newcommand{\fH}{\mathfrak{H}}
\newcommand{\cG}{\mathcal{G}}
\newcommand{\add}{\mathrm{add}}
\newcommand{\gldim}{\mathrm{gl.dim}}
\DeclareMathOperator{\rep}{\mathrm{rep}}
\DeclareMathOperator{\repdim}{\mathrm{repdim}}
\DeclareMathOperator{\Rep}{\mathrm{Rep}}
\DeclareMathOperator{\colim}{\mathrm{colim}}
\DeclareMathOperator{\Hom}{\mathrm{Hom}}
\DeclareMathOperator{\End}{\mathrm{End}}
\DeclareMathOperator{\Ker}{\mathrm{Ker}}
\DeclareMathOperator{\mods}{\mathrm{mod}}
\DeclareMathOperator{\ind}{\mathrm{ind}}
\DeclareMathOperator{\proj}{\mathrm{proj}}
\DeclareMathOperator{\coker}{\mathrm{coker}}
\DeclareMathOperator{\im}{\mathrm{im}}

\newcommand{\tail}{\mathrel{\clipbox*{{-.5ex} {-.5ex} {.3\width} {1.5ex}}{$\rightarrowtail$}}}

\newcommand\mono[2][]{%
  \mathrel{\ooalign{$\mkern-4mu\tail$\cr%
$\mkern4mu\xrightarrow[#1]{#2}$\cr}}
}

\newcommand\epi[2][]{%
  \mathrel{\ooalign{$\xrightarrow[#1\mkern4mu]{#2\mkern4mu}$\cr%
  \hidewidth$\rightarrow\mkern4mu$}}
}

\makeatletter
\providecommand*{\twoheadrightarrowfill@}{%
  \arrowfill@\relbar\relbar\twoheadrightarrow
}
\providecommand*{\twoheadleftarrowfill@}{%
  \arrowfill@\twoheadleftarrow\relbar\relbar
}
\providecommand*{\xtwoheadrightarrow}[2][]{%
  \ext@arrow 0579\twoheadrightarrowfill@{#1}{#2}%
}
\providecommand*{\xtwoheadleftarrow}[2][]{%
  \ext@arrow 5097\twoheadleftarrowfill@{#1}{#2}%
}
\makeatother

\title{Exact Structures for Persistence Modules}

\author{Benjamin Blanchette}
\address{Benjamin Blanchette, Départment de Mathématiques,
Université de Sherbrooke,
Sherbrooke, QC, J1K 2R1, Canada}
\email{Benjamin.Blanchette@USherbrooke.ca}

\author{Thomas Brüstle}
\address{Thomas Brüstle, Départment de Mathématiques,
Université de Sherbrooke,
Sherbrooke, QC, J1K 2R1, Canada and
 Bishop’s University, Sherbrooke, QC, J1M 1Z7, Canada}
\email{Thomas.Brustle@USherbrooke.ca, tbruestl@bishops.ca}

\author{Eric J. Hanson}
\address{Département de Mathématiques, Université de Sherbrooke, Sherbrooke, QC, J1K 2R1, Canada;\newline \indent Département de Mathématiques, LACIM, Université du Québec à Montréal, Montréal, QC,\newline\indent\indent H2L 2C4, Canada}
\email{ejhanso3@ncsu.edu}

\subjclass[2020]{55N31, 16G20, 18G25 (primary); 16E10, 16E20, 16S50, 19A49 (secondary)}

\keywords{multiparameter persistence modules, homological invariant, relative homological algebra, Grothendieck groups}

\begin{document}

\maketitle

\begin{abstract}
 We discuss applications of exact structures and relative homological algebra to the study of invariants of multiparameter persistence modules. This paper is mostly expository, but does contain a pair of novel results. Over finite posets, classical arguments about the relative projective modules of an exact structure make use of Auslander-Reiten theory. One of our results establishes a new adjunction which allows us to ``lift'' these arguments to certain infinite posets over which Auslander-Reiten sequences do not always exist. We give several examples of this lifting, in particular highlighting the non-existence and existence of resolutions by upsets when working with finitely presentable representations of the plane and of the closure of the positive quadrant, respectively. We then restrict our attention to finite posets. In this setting, we discuss the relationship between the global dimension of an exact structure and the representation dimension of the incidence algebra of the poset. We conclude with our second novel contribution. This is an explicit description of the irreducible morphisms between relative projective modules for several exact structures which have appeared previously in the literature.
\end{abstract}

\tableofcontents

\section{Introduction}

The classical approach to topological data analysis is to pass from a data set (e.g. a finite set of points in $\mathbb{R}^n$) to a representation of a totally ordered set (called a \emph{1-parameter persistence module}) by building a filtration of topological spaces and taking homology. By an extension of Gabriel's theorem for representations of quivers of type A (see \cite[Theorem~1.2]{BCB}), the resulting representation uniquely decomposes into a direct sum of ``interval modules''. These intervals encode the ``birth points'' and ``death points'' of homological features, and together comprise the \emph{barcode} of the persistence module, and can be seen as encoding the ``birth points'' and ``death points'' of homological features. As the barcode uniquely determines the isomorphism class of the persistence module, the problem of understanding 1-parameter persistence modules is essentially solved from a representation-theoretic perspective.

On the other hand, there are many instances in topological data analysis where one wishes to build a filtration of topological spaces using two or more parameters. See e.g. \cite[Section~1.1]{BL} for examples. This leads to the concept of \emph{multiparameter persistent homology}, originally formulated in \cite{CZ}. In this framework, one obtains a representation of a product of totally ordered sets. With few exceptions, these posets have wild representation type, meaning that an explicit classification of all indecomposable representations lies far beyond computational feasibility. One approach to this problem is to describe persistence modules using \emph{additive invariants} such as the dimension vector, the barcode, or the rank invariant of a persistence module. We defer a more formal discussion and examples to Section~\ref{sec:invariant}.

Recently, it has become apparent that tools from relative homological algebra (RHA), and specifically the theory of \emph{exact categories}, are useful in understanding old and defining new invariants. 
In particular, many classical invariants from topological data analysis  (the barcode, the dimension vector, the rank invariant, and many of their generalizations) can be formulated 
nicely using the language of exact categories,
see \cite{asashiba,AENY,BBH,BOO,BOOS,CGRST,OS}. Following the third author's presentation of the paper \cite{BBH} at the 2022 International Conference on Representations of Algebras (ICRA), the present paper is meant to serve as a complement to these works. Indeed, a beautiful summary of the study of multiparameter persistence is included in the proceedings of the 2020 ICRA conference \cite{BL}. Moreover, \cite[Section~1 and~3]{BBH} outlines motivations for using RHA to define new invariants and explains its relationship to other constructions in the literature. Thus to avoid redundancy, this paper has three main goals.

\begin{enumerate}
    \item Many of the arguments appearing in the study of exact structures of persistence modules are taken from the classical papers \cite{AS,DRSS}. Both of these papers assume the category being studied has \emph{almost split sequences}, a property which holds when working over finite posets but not necessarily over infinite ones. As observed in \cite{BOO,BOOS,OS}, many of the arguments can indeed be extended to posets which do not have almost split sequences, but we believe there is some subtlety to be addressed. This is mostly done in Sections~\ref{sec:exact} and~\ref{sec:infinite}. In particular, we show in Examples~\ref{ex:infinite_exact_2} and~\ref{ex:upsets} that finitely presentable modules over $\mathbb{R}^2$ cannot always be resolved by finitely presentable upset-representations, while those over $[0,\infty)^2 \subseteq \mathbb{R}^2$ can.
    \item The ``homological invariants'' defined in \cite{BBH} require the global dimension of an exact structure to be finite. Over finite posets, this is often proved by showing that the endomorphism ring of the relative projectives has finite global dimension. It was asked at the 2022 ICRA conference whether this is related to the notion of \emph{representation dimension}. Indeed, in \cite{AENY}, the theory of ``quasi-hereditary algebras'' is used to prove the finiteness of a particular exact structure arising in persistence theory. Such algebras were fundamental in Iyama's proof of the finiteness of the representation dimension \cite{iyama} (see also \cite{ringel}). We expand upon this discussion in Section~\ref{sec:rep_dim}.
    \item In \cite[Section~8.3]{OS} and \cite[Sections~6 and~7]{BOOS}, a recipe is given (and its hypotheses verified in special cases) for establishing stability results of homological invariants for $n$-parameter persistence modules. One of the key ingredients is to show that, when working over finite subgrids (see Defintion~\ref{def:grid}), the global dimension of an exact structure is uniformly bounded with respect to the size of the grid. In Section~\ref{sec:irreducible}, we describe the irreducible morphisms between indecomposable relative projectives over certain exact structures (over finite posets). This in particular allows us to compute explitly the quiver of the corresponding endomorphism ring, which we hope to be a useful tool for establishing similar stability results in future works.
\end{enumerate}


\section{Preliminaries}\label{sec:background}

In this section, we recall background information about posets, persistence modules, and invariants. We work over a fixed poset $\cP$ throughout this section.

\subsection{Posets and lattices}

Fix a poset $\cP$. A subset $A \subseteq \cP$ is called an \emph{antichain} if there are no relations between distinct elements of $A$. If $A = \{a\}$ is a singleton, we will often simply denote $A = a$. For $A$ and $B$ two antichains, we write $A \leq B$ if there exists $a \in A$ and $b \in B$ such that $a \leq b$. 
Influenced by that appearing in \cite{BBH,BOO}, we fix the following notation and terminology.

\begin{definition}\label{def:segment_hook}
    Let $A \leq B$ be antichains. Denote
    \begin{enumerate}
        \item $\langle A,\infty\langle \ = \{c \in \cP \mid A \leq c\}$. Subsets of this form are referred to as \emph{closed upsets}.
        \item $\langle A,B\rangle = \{c \in \cP \mid A \leq c \leq B\}$. If $|A| = 1 = |B|$, we call $\langle A,B\rangle$ a \emph{segment}.
        \item $\langle A,B\langle \ = \{c \in \cP \mid A \leq c \not\geq B\}$. If $|A| = 1 = |B|$, we call $\langle A,B\langle$ a \emph{hook}.
        \item $\rangle A,B\rangle = \{c \in \cP \mid A \not \geq c \leq B\}$. If $|A| = 1 = |B|$, we call $\langle A,B\langle$ a \emph{cohook}.
    \end{enumerate}
\end{definition}

See Figure~\ref{fig:spreads} for examples of these constructions.

\begin{figure}
    \begin{tikzpicture}[scale = 0.7]
        \filldraw (0,0) circle (0.1);
        \filldraw (1,0) circle (0.1);
        \filldraw (2,0) circle (0.1);
        \filldraw (0,1) circle (0.1);
        \filldraw (1,1) circle (0.1);
        \filldraw (2,1) circle (0.1);
        \filldraw (0,2) circle (0.1);
        \filldraw (1,2) circle (0.1);
        \filldraw (2,2) circle (0.1);
        \draw[fill,color=black,opacity=.2] (-0.5,1.5)--(-0.5,2.5)--(2.5,2.5)--(2.5,-0.5)--(1.5,-0.5)--(1.5,0.5)--(0.5,0.5)--(0.5,1.5)--cycle;

        \node[anchor = north] at (0,2) {$a_1$};
        \node[anchor = north] at (1,1) {$a_2$};
        \node[anchor = north] at (2,0) {$a_3$};
        \node[anchor = north] at (0,1) {$c_1$};
        \node[anchor = north] at (1,0) {$c_2$};
        \node[anchor = south] at (2,2) {$d$};

        \node at (1,-1) {(i)};
    \begin{scope}[shift = {(5,0)}]
                \filldraw (0,0) circle (0.1);
        \filldraw (1,0) circle (0.1);
        \filldraw (2,0) circle (0.1);
        \filldraw (0,1) circle (0.1);
        \filldraw (1,1) circle (0.1);
        \filldraw (2,1) circle (0.1);
        \filldraw (0,2) circle (0.1);
        \filldraw (1,2) circle (0.1);
        \filldraw (2,2) circle (0.1);
        \draw[fill,color=black,opacity=.2] (0.5,-0.5)--(0.5,2.5)--(2.5,2.5)--(2.5,-0.5)--cycle;
        \node [anchor = north] at (1,0) {$a$};
        \node [anchor = north] at (0,0) {$c$};
        \node [anchor = south] at (2,2) {$b$};

        \node at (1,-1) {(ii)};
    \end{scope}
      \begin{scope}[shift = {(10,0)}]
        \filldraw (0,0) circle (0.1);
        \filldraw (1,0) circle (0.1);
        \filldraw (2,0) circle (0.1);
        \filldraw (0,1) circle (0.1);
        \filldraw (1,1) circle (0.1);
        \filldraw (2,1) circle (0.1);
        \filldraw (0,2) circle (0.1);
        \filldraw (1,2) circle (0.1);
        \filldraw (2,2) circle (0.1);
        \draw[fill,color=black,opacity=.2] (-0.5,-0.5)--(-0.5,2.5)--(0.5,2.5)--(0.5,0.5)--(2.5,0.5)--(2.5,-0.5)--cycle;
        \node [anchor = north] at (0,0) {$a$};
        \node [anchor = south] at (1,1) {$b$};
        \node [anchor = south] at (2,1) {$c$};

        \node at (1,-1) {(iii)};
    \end{scope}
    \end{tikzpicture}
    \caption{(i) The closed upset $\langle \{a_1,a_2,a_3\},\infty\langle$. This can also be written as $\rangle \{c_1,c_2\},d\rangle$. (ii) The segment $\langle a,b\rangle$, the closed upset $\langle a,\infty\rangle$, and the cohook $\rangle c,b\rangle$ all coincide. (iii) The hook $\langle a,b\langle$. All examples are taken over the poset $\{0,1,2\}\times \{0,1,2\}$ with the usual product order.}\label{fig:spreads}
\end{figure}

\begin{definition}\label{def:covers}
    Let $\cS \subseteq \cP$ and $x \in \cP$. We say that $\cS$ \emph{covers} $x$, in symbols $x \cover \cS$, if the following both hold.
    \begin{enumerate}
        \item $x \leq \cS$ and $x \notin \cS$.
        \item If $y \in \cP$ is such that $x \lneq y \leq \cS$, then $y \in \cS$.
    \end{enumerate}
    We define $\cS \cover x$ analogously. Note also that if $\cS = \{z\}$ is a singleton, then $x \cover \{z\}$ if and only if $z$ covers $x$ in the classical sense. In this case, we often write $x \cover z$ in place of $x \cover \{z\}$.
\end{definition}

\begin{example}
    Let $\cS$ be the shaded region of Figure~\ref{fig:spreads}(iii). Then $\cS \cover b$. Moreover, $\cS \not\hspace{-0.3em}\cover\hspace{0.3em} c$ but there exists $y \in \cS$ such that $y \cover c$.
\end{example}

\begin{definition}\label{def:spread}
    Let $\cS \subseteq \cP$.
    \begin{enumerate}
        \item We say that $\cS$ is an \emph{upset} if for all $x \in \cS$ and $x \leq y \in \cP$, one has $y \in \cS$.
        \item We say that $\cS$ is \emph{convex} if for all $x \leq y \in \cS$, one has $\langle x,y\rangle \subseteq \cS$.
        \item We say that $\cS$ is \emph{connected} if for all $x, y \in \cS$, there exists a sequence
        $x = z_0 \leq z_1 \geq z_2 \leq \cdots \geq z_k = y$
        with each $z_i \in \cS$.
        \item We say that $\cS$ is a \emph{spread} if it is both convex and connected\footnote{Spreads are often referred to as ``intervals'' in the literature, but we avoid using this term to avoid confusion with segments. Note also that in \cite{BBH}, the term ``spread'' means only a convex subset and ``connected spread'' is used for the concept defined here.}.
        \item We say that $\cS$ is an \emph{fp-spread} if it is a spread of the form $\langle A,B\langle$ or $\langle A,\infty\langle$ for $A$ and $B$ finite antichains in $\cP$. We also call $\langle A,\infty\langle$ an \emph{fp-upset}.
        \item We say that $\cS$ is a \emph{single-source fp-spread} if it is of the form $\langle x,\infty\langle$ or $\langle x,B\langle$ for $x \in \cP$ and $B \subseteq \cP$ a finite antichain.
    \end{enumerate}
\end{definition}

\begin{example}
    The examples in Figure~\ref{fig:spreads} are all fp-spreads, with (ii) and (ii) being single-source. An example of a region which is convex but not connected can be obtained by removing the point $a$ from the shaded region of Figure~\ref{fig:spreads}(iii).
\end{example}

\begin{remark}\label{rem:cover}
We note that subsets of the form $\langle A,\infty\langle$ and $\langle A,B\langle$ are always convex, but will not necessarily be connected. The name fp-spread will be justified in Corollary~\ref{cor:fp_spread}.
\end{remark}

Finally, we recall that $\cP$ is a \emph{join-semilattice} (or \emph{upper semilattice}) if each pair of elements $x, y \in \cP$ admits a \emph{join}, or least upper bound, denoted $x \vee y$. It will often be convenient for us to restrict to posets which are either finite or join-semilattices.

\subsection{Persistence modules}

Fix a field $\field$. We consider $\cP$ as a category with set of objects $\cP$ and with a single morphism $(x,y) \in \Hom_{\cP}(x,y)$ for each segment $\langle x,y\rangle$. By either a \emph{($\cP$-)persistence module} or a \emph{representation} of $\cP$, we mean a (covariant) functor from $\cP$ to the category of $\field$-vector spaces. We denote by $\Rep \cP$ the category of $\cP$-persistence modules.

We also consider $\field_\cP$ the (generally non-unital) $\field$-algebra with vector-space basis the set of segments in $\cP$ and multiplication of basis elements given by $\langle x,y\rangle \cdot \langle z,w\rangle = \langle x,w\rangle$ if $z = y$ and $\langle x,y\rangle \cdot \langle z,w\rangle = 0$ otherwise.
We denote by $\mathrm{Mod}(\field_\cP)$ the category of right $\field_\cP$-modules. 
If $\cP$ is finite, then $\field_\cP$ is the ($\field$-)incidence algebra of $\cP$ in the sense of \cite{DRS}. In this case, it is well-known (see e.g. \cite[Chapter~5]{simson}) that the categories $\Rep \cP$ and $\mathrm{Mod}(\field_\cP)$ are equivalent, but this is still true without the finiteness assumption. Explicitly, there are inverse equivalences $\Phi: \Rep \cP \rightleftarrows \mathrm{Mod}(\field_\cP): \Psi$ as follows.
\begin{itemize}
    \item For $M \in \Rep \cP$, one has $\Phi(M) = \bigoplus_{x \in \cP} M(x)$ as an abelian group. The right action is then given by the linear extension of the rule $$\Pi_z(v \cdot \langle x,y\rangle) = \begin{cases} M(x,y)(\Pi_x(v)) & z = y\\ 0 & \text{otherwise},\end{cases}$$
    for $x, y, z \in \cP$ with $x \leq y$, $v \in \Phi(M)$, and $\Pi_z$ and $\Pi_x$ the projection maps $\Phi(M) \rightarrow M(z)$ and $\Phi(M) \rightarrow M(x)$.
   \item For $V \in \Rep \cP$, $x \leq y \in \cP$, and $v \in V$, one has
   $$
       \Psi(V)(x) = V \cdot \langle x,x\rangle,\qquad\qquad
       \Psi(V)(x,y)(v) = v \cdot \langle x,y\rangle.
   $$
\end{itemize}
In light of this fact, we identify $\Rep \cP$ and $\mathrm{Mod}(\field_\cP)$ for the remainder of this paper.

In practice, one often considers only \emph{pointwise finite-dimensional (pwf)} representations, which are those representations $M$ for which each $M(x)$ is finite-dimensional. Given a spread $\cS \subseteq \cP$, the indicator module $\mathbb{I}_\cS$, known as a \emph{spread module}, is a notable example. Explicitly, $\mathbb{I}_\cS$ is given by
$$\mathbb{I}_\cS(x) = \begin{cases} \field & x \in \cS\\0 & \text{otherwise,}\end{cases}\qquad \mathbb{I}_\cS(x,y) = \begin{cases} 1_\field & x, y \in \cS\\0 & \text{otherwise.}\end{cases}$$
We analogously refer to \emph{fp-spread modules}, \emph{upset modules}, etc.

A description of the Hom-space between spread modules over $\mathbb{R}^n$ was given in \cite[Proposition~14]{DX}. See also \cite[Proposition~3.10]{miller} (which considers upset modules over arbitrary posets) and \cite[Proposition~5.5]{BBH} (which considers spread modules over finite posets). The statement and proof of this result is readily generalised to spread modules over arbitrary posets, thus yielding the following.

\begin{proposition}\label{prop:hom_spread}
    Let $\cS,\cT \subseteq \cP$ be spreads. We denote by $\mathfrak{I}(\cS,\cT)$ the set of connected components $\mathcal{U}$ of $\cS \cap \cT$ which satisfy
    $$\{x \in \cS \mid x \leq \mathcal{U}\} \subseteq \mathcal{U} \text{ and } \{y \in \cT \mid \mathcal{U} \leq y\} \subseteq \mathcal{U}.$$
    Then $\dim_\field\Hom_\cP\left(\mathbb{I}_{\cS},\mathbb{I}_{\cT}\right) = |\mathfrak{I}(\cS,\cT)|$. Moreover, for all $\mathcal{U} \in \mathfrak{I}(\cS,\cT)$, there exists a morphism $f: \mathbb{I}_\cS \rightarrow \mathbb{I}_\cT$ with $\im(f) = \mathbb{I}_{\mathcal{U}}$.
\end{proposition}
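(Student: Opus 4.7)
The plan is to construct an explicit basis of $\Hom_\cP(\mathbb{I}_\cS,\mathbb{I}_\cT)$ indexed by $\mathfrak{I}(\cS,\cT)$. For each $\mathcal{U} \in \mathfrak{I}(\cS,\cT)$, I would define $f_{\mathcal{U}} : \mathbb{I}_\cS \to \mathbb{I}_\cT$ pointwise by letting $f_{\mathcal{U}}(x) = 1_\field$ when $x \in \mathcal{U}$ (so that $\mathbb{I}_\cS(x) = \field = \mathbb{I}_\cT(x)$) and $f_{\mathcal{U}}(x) = 0$ otherwise. The image of $f_{\mathcal{U}}$ is then visibly $\mathbb{I}_{\mathcal{U}}$, so the second assertion of the proposition will follow as soon as we verify naturality.

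To check naturality at $x \leq y$, I would run through four cases. When $x \notin \cS$ both sides vanish trivially. When $x \in \cS \setminus \mathcal{U}$, the left-hand side requires $y \notin \mathcal{U}$; this is exactly the contrapositive of the downward closure condition $\{x \in \cS \mid x \leq \mathcal{U}\} \subseteq \mathcal{U}$. When $x \in \mathcal{U}$ and $y \in \cS$, I need $y \in \mathcal{U}$ iff $y \in \cT$; the nontrivial direction uses that $[x,y] \subseteq \cS \cap \cT$ by convexity of both spreads, so $y$ lies in the same connected component as $x$. When $x \in \mathcal{U}$ and $y \notin \cS$, I need $y \notin \cT$, which is exactly the upward closure condition $\{y \in \cT \mid \mathcal{U} \leq y\} \subseteq \mathcal{U}$ (together with $\mathcal{U} \subseteq \cS$).

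For linear independence, since the various $\mathcal{U}$ are disjoint, evaluating $\sum c_{\mathcal{U}} f_{\mathcal{U}}$ at any point of a fixed $\mathcal{U}$ extracts the coefficient $c_{\mathcal{U}}$. The remaining content is to show the $f_{\mathcal{U}}$ span. Given any $f \in \Hom_\cP(\mathbb{I}_\cS,\mathbb{I}_\cT)$, define $c_x \in \field$ by $f(x) = c_x \cdot 1_\field$ for each $x \in \cS \cap \cT$. Naturality applied to any comparison $x \leq y$ with both $x,y \in \cS \cap \cT$ gives $c_x = c_y$, and by the definition of connectedness (zig-zags through $\cS \cap \cT$), $c$ is constant on each connected component of $\cS \cap \cT$, yielding a scalar $c_{\mathcal{U}}$ for each component $\mathcal{U}$.

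The final step, which I expect to be the most delicate, is showing $c_{\mathcal{U}} = 0$ whenever $\mathcal{U} \notin \mathfrak{I}(\cS,\cT)$. If the downward closure condition fails, pick $x \in \cS \setminus \mathcal{U}$ with $x \leq u$ for some $u \in \mathcal{U}$. If $x \in \cT$ then $x$ lies in the component of $u$ by convexity, contradicting $x \notin \mathcal{U}$; hence $x \notin \cT$, so $\mathbb{I}_\cT(x) = 0$, and naturality at $x \leq u$ forces $c_u = 0$. A symmetric argument using $u \leq y \in \cT$ handles the failure of the upward closure condition. This shows $f = \sum_{\mathcal{U} \in \mathfrak{I}(\cS,\cT)} c_{\mathcal{U}} f_{\mathcal{U}}$, completing the proof.
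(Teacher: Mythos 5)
Your proof is correct and is essentially the standard argument that the paper has in mind when it states the result is ``readily generalised'' from the cited references (Dey--Xin, Miller, and \cite{BBH}): one constructs the indicator morphisms $f_{\mathcal U}$ as a candidate basis, verifies naturality via the two closure conditions defining $\mathfrak{I}(\cS,\cT)$, and shows spanning by observing that any morphism is forced to be locally constant on components of $\cS\cap\cT$ and to vanish on components failing either closure condition. One very minor simplification: in your Case~3 you invoke convexity to place $\langle x,y\rangle$ in $\cS\cap\cT$, but this is unnecessary --- the direct relation $x\leq y$ with $x,y\in\cS\cap\cT$ already witnesses that $y$ is in the same zig-zag component as $x$ under the paper's definition of connectedness.
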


In particular, each spread module has endomorphism ring isomorphic to $\field$, and is thus indecomposable. Moreover, if $\cP$ is totally ordered, then every indecomposable pwf representation is (isomorphic to) a spread module by \cite[Theorem~1.2]{BCB} (see also \cite[Section~3.6]{GR}). We will discuss the implications of this fact for 1-parameter persistence theory in Section~\ref{sec:invariant}.

While natural to consider, the category of pwf representations of an infinite poset can exhibit several pathological homological properties. For example, over $\mathbb{R}$ with the standard order, there exist representations which are projective\footnote{Recall that an object $P$ in an abelian category $\mathcal{A}$ is \emph{projective} if $\mathrm{Ext}^1_\mathcal{A}(P,-) = 0$, or equivalently if $\Hom_\mathcal{A}(P,-)$ is exact.} in the category of pwf representations but not projective in the category $\Rep \cP$, see e.g. \cite[Section~2.1]{IRT} and \cite[Theorem~1.2]{BM}.
To avoid such behavior, we restrict to the category $\rep \cP$ (resp. $\mods(\field_\cP)$) of \emph{finitely presentable}\footnote{Recall that an object $M$ in an abelian category $\mathcal{A}$ is \emph{finitely presentable} if it is isomorphic to the cokernel of a morphism between finitely generated projectives. In our setting, we take $\mathcal{A} = \Rep(\cP)$.} representations (resp. modules). 

\begin{remark}
    It is well-known (e.g. from the theory of finite-dimensional algebras as developed in \cite{ASS} or \cite{ARS}) that the categories of finitely presentable and pwf persistence modules coincide for finite posets. Over infinite posets, our decision to restrict to finitely presentable modules is consistent with the approach taken in many recent papers, e.g. \cite{BS,BOO,LW}. There are, however, examples of pwf persistence modules which are not finitely presentable but are still ``tame'', and thus have nice homological properties. We refer to \cite{miller} and \cite[Section~8]{CJT} for a more detailed discussion.
\end{remark}

\begin{remark} Projective representations are also called \emph{free persistence modules}, especially in the context where $\Rep(\mathbb{R}^n)$ is viewed as the category of $\mathbb{Z}^n$-graded modules over the polynomial ring in $n$ variables.\end{remark}

The following is well-known, but we give an outline of a proof for convenience.

\begin{theorem}\label{thm:projective}
    Let $M \in \Rep \cP$ be indecomposable. Then $M$ is projective if and only if there exists $x \in \cP$ such that $M \cong \mathbb{I}_{\langle x,\infty\langle}$.
\end{theorem}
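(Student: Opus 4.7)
The plan is to prove the two directions separately, with the backward direction being essentially formal and the forward direction requiring a Krull-Schmidt-style argument.

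For the backward direction, I would first identify $\mathbb{I}_{\langle x,\infty\langle}$ with the representable functor $\Hom_\cP(x,-) : \cP \to \field\text{-Vect}$, noting that this functor sends $y$ to $\field$ when $x \leq y$ and to $0$ otherwise. The Yoneda lemma then gives a natural isomorphism $\Hom_{\Rep \cP}(\mathbb{I}_{\langle x,\infty\langle}, N) \cong N(x)$ for every $N \in \Rep \cP$. Since short exact sequences in $\Rep \cP$ are computed pointwise, the evaluation functor $N \mapsto N(x)$ is exact, so $\mathbb{I}_{\langle x,\infty\langle}$ is projective.

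For the forward direction, suppose $M$ is indecomposable and projective. Each element $m \in M(x)$ corresponds under the Yoneda isomorphism to a morphism $f_m : \mathbb{I}_{\langle x,\infty\langle} \to M$, and packaging these together yields an epimorphism
$$F := \bigoplus_{x \in \cP} \bigoplus_{m \in M(x)} \mathbb{I}_{\langle x,\infty\langle} \twoheadrightarrow M,$$
whose image at any point $y \in \cP$ already contains every element of $M(y)$ (take $x = y$). Projectivity of $M$ makes this surjection split, exhibiting $M$ as a direct summand of $F$. Proposition~\ref{prop:hom_spread} applied with $\cS = \cT = \langle x,\infty\langle$ shows that each summand has endomorphism ring isomorphic to $\field$, which is local. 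I would then invoke the Krull-Schmidt-Azumaya theorem, which asserts that an indecomposable direct summand of a coproduct of objects with local endomorphism rings is isomorphic to one of the summands. This forces $M \cong \mathbb{I}_{\langle x,\infty\langle}$ for some $x \in \cP$, as desired.

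The main subtlety lies in the final step: Krull-Schmidt-Azumaya must be applied to a (potentially uncountable) direct sum. This is legitimate because $\Rep \cP \cong \mathrm{Mod}(\field_\cP)$ is a Grothendieck abelian category, a setting in which Azumaya's classical theorem holds without restriction on the indexing set. Once this point is granted, the rest of the argument is entirely formal.
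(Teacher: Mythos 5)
Your proposal is correct and follows essentially the same route as the paper: establish the Yoneda-type natural isomorphism $\Hom_\cP(\mathbb{I}_{\langle x,\infty\langle},-)\cong (-)(x)$ to get projectivity and the existence of a projective cover from a (possibly infinite) direct sum of these modules, then invoke Krull--Remak--Schmidt--Azumaya (the endomorphism rings being local since each spread module is a brick) to conclude the forward direction. Your extra remark justifying the applicability of Azumaya's theorem in a Grothendieck category is a welcome bit of precision that the paper leaves implicit.
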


\begin{proof}
    There is a natural isomorphism $\Hom_\cP(\mathbb{I}_{\langle x,\infty\langle},-) \rightarrow (-)(x)$ given by sending $h: \mathbb{I}_{\langle x,\infty\langle} \rightarrow N$ to $h_x(1_\field)$. This implies that $\Hom_\cP(\mathbb{I}_{\langle x,\infty\langle},-)$ is exact and so $\mathbb{I}_{\langle x,\infty\langle}$ is projective. Moreover, it implies that every $N \in \Rep(\cP)$ is isomorphic to a quotient of a direct sum of representations of the form $\mathbb{I}_{\langle x,\infty\langle}$. Since the endomorphism ring of each $\mathbb{I}_{\langle x,\infty\langle}$ is local, the Krull-Remak-Schmidt-Azumaya Theorem \cite{azumaya} (see also \cite[page~17-12]{gabriel}) thus implies that there are no other indecomposable projectives.
\end{proof}

\begin{remark}
    $\mathbb{I}_{\langle x,\infty\langle} = \langle x,x\rangle \cdot \field_\cP$ as a right $\field_\cP$-module.
\end{remark}

\begin{corollary}\label{cor:fp_spread}
    \begin{enumerate}
        \item Every finitely presentable persistence module is pwf.
        \item Suppose that $\cP$ is finite or a join-semilattice, and let $\cS \subseteq \cP$ be a spread. Then $\mathbb{I}_{\cS}$ is finitely presentable if and only if $\cS$ is an fp-spread.
    \end{enumerate}
\end{corollary}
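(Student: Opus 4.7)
For part (1), I apply Theorem~\ref{thm:projective}: every indecomposable projective $\mathbb{I}_{\langle x,\infty\langle}$ is pwf (dimension at most $1$ at each point), hence so is every finitely generated projective, and the cokernel of a morphism between pwf representations is pointwise a quotient of a finite-dimensional space. Therefore every finitely presentable representation is pwf.

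For part (2) $(\Leftarrow)$, I construct explicit finite presentations. For $\cS = \langle A,\infty\langle$ with $A$ a finite antichain, let
$$\phi \colon \bigoplus_{a \in A}\mathbb{I}_{\langle a,\infty\langle} \twoheadrightarrow \mathbb{I}_\cS$$
send the generator at $a$ to $1 \in \field$. In a join-semilattice (or finite poset), I claim $\ker\phi$ is generated by the ``pair relations'' $\mathbb{I}_{\langle a \vee a',\infty\langle} \to \bigoplus_a \mathbb{I}_{\langle a,\infty\langle}$, $1 \mapsto e_a - e_{a'}$, for distinct pairs $a,a' \in A$: at any $y \in \langle A,\infty\langle$, an element $(v_a) \in \ker\phi(y)$ satisfies $\sum_a v_a = 0$ and may be rewritten as $\sum_{a \neq a_0}v_a(e_a - e_{a_0})$ for any $a_0$ in its support. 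Hence $\mathbb{I}_{\langle A,\infty\langle}$ is finitely presentable. For $\cS = \langle A,B\langle$, the short exact sequence
$$0 \to \mathbb{I}_{\langle B,\infty\langle} \to \mathbb{I}_{\langle A,\infty\langle} \to \mathbb{I}_\cS \to 0$$
exhibits $\mathbb{I}_\cS$ as a cokernel of a morphism between finitely presentable modules, hence as finitely presentable.

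For $(\Rightarrow)$, the finite-poset case is immediate, so assume $\cP$ is a join-semilattice. From any finite generating set of $\mathbb{I}_\cS$ I extract a finite antichain $A \subseteq \cS$, consisting of the minimal elements of $\cS$ with $\cS \subseteq \langle A,\infty\langle$, and reuse the surjection $\phi$ above. A finite presentation provides finitely many kernel generators at points $y_1,\ldots,y_m$. Let $B$ be the set of minimal elements of $\langle A,\infty\langle \setminus \cS$. The key observation is that $\ker\phi(y)$ has dimension $|\{a \leq y\}|$ for $y \in \langle A,\infty\langle \setminus \cS$ but only $|\{a \leq y\}| - 1$ for $y \in \cS$, and the extra ``all-ones'' direction at any $y \notin \cS$ cannot be produced by pushing forward a generator at any $y_j \in \cS$ (whose image has zero coordinate sum). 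This forces (i) each $b \in B$ to coincide with some $y_j$, giving $|B| \leq m$; and (ii) for any $c \in \langle A,\infty\langle \setminus \cS$, selecting a minimal element among the $y_j \leq c$ with $y_j \notin \cS$ (finitely many, hence a minimal one exists) shows via the same dimension argument that this minimal $y_j$ must lie in $B$, whence $c \geq b$ for some $b \in B$. Combined with convexity of $\cS$, this gives $\cS = \langle A,B\langle$, an fp-spread. The main obstacle here is the bookkeeping in the kernel analysis, in particular the descent in (ii), which leverages the finiteness of the kernel generators to avoid imposing a descending chain condition on $\cP$.
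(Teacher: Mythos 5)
Your argument is correct, and parts (1) and (2)($\Leftarrow$) match the paper's proof: the exact sequence you build from the pair relations $1\mapsto e_a - e_{a'}$ is precisely the paper's sequence indexed by the multiset $A' = \{x\vee y \mid x\ne y\in A\}$, and both then pass to $\langle A,B\langle$ via the same short exact sequence. Your converse in (2), however, takes a genuinely different route. The paper observes that $\ker\phi$ is the direct sum of the connected upset modules $\mathbb{I}_{\mathcal{U}}$ over the connected components $\mathcal{U}$ of $\langle A,\infty\langle\,\setminus\,\cS$, invokes that $\rep\cP$ is abelian to conclude this kernel is finitely presentable, and then re-applies the same finite-antichain extraction to each $\mathcal{U}$ (which, being a finitely generated connected upset, must equal $\langle B_{\mathcal{U}},\infty\langle$), finally setting $B = \bigsqcup_{\mathcal{U}} B_{\mathcal{U}}$. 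You instead analyze $\ker\phi$ pointwise, using the ``all-ones'' vector outside $\cS$ as the obstruction: it cannot arise from pushforwards of zero-sum generators based in $\cS$, so a kernel generator must sit at every minimal point of $\langle A,\infty\langle\,\setminus\,\cS$, giving $|B|\le m$ directly, and the descent in (ii) shows every point of the complement dominates some $b\in B$. Both routes rest on the same finite-generation input for $\ker\phi$ (this is where ``$\rep\cP$ is abelian'' enters, implicitly in your phrase ``a finite presentation provides finitely many kernel generators'') and both extract finiteness from the presentation rather than assuming any descending chain condition on $\cP$. Your version avoids the connected-component decomposition and makes the construction of $B$ explicit; the one spot to be careful about is that the descent in (ii) should strictly be tracked with generators of nonzero coordinate sum rather than merely generators based outside $\cS$ (the latter may still have zero sum), but this repairs easily since the same argument run one more step forces the selected minimal $y_{j_0}$ to carry a nonzero-sum generator.
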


\begin{proof}
    (1) is an immediate consequence of the fact that $\mathbb{I}_{\langle x,\infty\langle}$ is pwf for all $x \in \cP$.

    (2) If $\cP$ is finite then (a) every spread is both pwf and an fp-spread, and (b) every pwf representation is finitely presentable. Thus suppose $\cP$ is a join-semilattice. Let $A \subseteq \cP$ be a finite antichain and consider the multiset $A' = \{x \vee y \mid x \neq y \in A\}$. One can then use Proposition~\ref{prop:hom_spread} to show that there is an exact sequence
    $\bigoplus_{z \in A'} \mathbb{I}_{\langle z,\infty\langle} \rightarrow \bigoplus_{x \in A} \mathbb{I}_{\langle x,\infty\langle} \rightarrow 0.$
    This shows that $\mathbb{I}_{\langle A,\infty\langle} \in \rep \cP$. Moreover, for $B \subseteq \cP$ a finite antichain with $B \subseteq \langle A,\infty\langle$, there is a short exact sequence
    $0 \rightarrow \mathbb{I}_{\langle B,\infty\langle} \rightarrow \mathbb{I}_{\langle A,\infty\langle} \rightarrow \mathbb{I}_{\langle A,B\langle} \rightarrow 0,$
    again by Proposition~\ref{prop:hom_spread}.
    Thus $\mathbb{I}_{\langle A,B\langle} \in \rep \cP$ since $\rep \cP$ is abelian.

    Conversely, let $\cS \subseteq \cP$ be a spread and suppose that $\mathbb{I}_\cS \in \rep \cP$. Thus there exist finitely generated projectives $P, Q \in \rep \cP$ and a morphism $r: P \rightarrow Q$ such that $\mathbb{I}_\cS \cong \coker r$. It follows that for all $x \in \cS$ there exists $y \leq x \in \cS$ such that $\mathbb{I}_{\langle y,\infty\langle}$ is a direct summand of $Q$. Since $Q$ is finitely generated, this means there exists a finite antichain $A \subseteq \cS$ such that every $x \in \cS$ satisfies $A \leq x$. If $\cS$ is an upset, this implies that it is an fp-spread. Otherwise, let $\mathcal{T} = \{z \in \cP \mid \cS \lneq z\}$, and note that we can write $\mathcal{T} = \bigsqcup_{\mathcal{U} \in \mathfrak{T}}\mathcal{U}$ as a disjoint union of connected upsets. Proposition~\ref{prop:hom_spread} can then be used to construct a short exact sequence
    $0 \rightarrow \bigoplus_{\mathcal{U} \in \mathfrak{T}} \mathbb{I}_{\mathcal{U}} \rightarrow \mathbb{I}_{\langle A,\infty\langle} \rightarrow \mathbb{I}_\cS \rightarrow 0.$
    Since $\rep \cP$ is abelian, the same argument used for $\cS$ shows that $\mathfrak{T}$ is finite and that each $\mathcal{U}$ is of the form $\langle B_\mathcal{U},\infty\langle$ for some $B_\mathcal{U} \subseteq \cP$ a finite antichain. Taking $B = \bigsqcup_{\mathcal{U} \in \mathfrak{T}} B_\mathcal{U}$, we conclude that $\cS = \langle A,B\langle$ is an fp-spread.
\end{proof}

\begin{example}
    Suppose $\cP = \{x,y\} \sqcup \mathbb{Z}$ with relations $x \leq i$ and $y \leq i$ for all $i \in \mathbb{Z}$. Then $\cP$ is not a join-semilattice and $\mathbb{I}_{\langle \{x,y\},\infty\langle}$ is not finitely presentable.
\end{example}

\begin{remark}
    One consequence of Corollary~\ref{cor:fp_spread} is that the category $\rep \cP$ is \emph{Krull-Remak-Schmidt}, which informally means every representation can be uniquely (in some precise sense) expressed as a finite direct sum of indecomposable subrepresentations. Indeed, Corollary~\ref{cor:fp_spread}(1) and the main result of \cite{BCB} imply that every $M \in \rep \cP$ admits a (potentially infinite) direct sum decomposition which satisfies the necessary uniqueness property. The finiteness of the decomposition is then a consequence of the charactersablion of indecomposable projectives given in Theorem~\ref{thm:projective}.
\end{remark}

\subsection{Invariants}\label{sec:invariant}

We now build towards a formal definition of an invariant and discuss several examples. Note that, while we are working only with finitely presentable persistence modules, the general theory of invariants can be formulated in any additive category. See also Remark~\ref{rem:pwf} for an explicit discussion of the pwf setting.

Since $\rep \cP$ is skeletally small, we adopt the common convention of collapsing isomorphism classes. That is, when we say ``the set of objects in a subcategory'' we assume that we have fixed a single representative for each isomorphism class.

Recall that a subcategory $\mathcal{C} \subseteq \rep \cP$ is \emph{additive} if it is closed under finite direct sums and direct summands. Thus an additive category is uniquely determined by its set of indecomposable objects. For $\mathcal{C} \subseteq \rep \cP$, we denote by $\add(\mathcal{C})$ the smallest additive category containing $\mathcal{C}$, whose objects are all finite direct sums of direct summands of objects in $\mathcal{C}$. We also denote by $\ind(\mathcal{C})$ the set of indecomposable objects in $\mathcal{C}$.

\begin{definition}\label{def:invariant}
    Let $\mathcal{C} \subseteq \rep \cP$ be an additive subcategory and denote by $K^{\mathrm{split}}_0(\mathcal{C})$ the free abelian group with basis $\ind(\mathcal C)$\footnote{This is the \emph{split Grothendieck group} of $\mathcal{C}$. See Definition~\ref{def:grothendieck} for additional discussion.}.
    \begin{enumerate}
        \item An \emph{additive invariant} (from here on referred to just as an \emph{invariant}) on $\mathcal{C}$ is a group homomorphism $p: K^{\mathrm{split}}_0(\mathcal{C}) \rightarrow G$ for some abelian group $G$. Alternatively, $p$ is an association $\mathcal{C} \rightarrow G$ which is constant on isomorphism classes and satisfies $p(M \oplus N) = p(M) + p(N)$.
        \item Let $p: K^{\mathrm{split}}_0(\mathcal{C}) \rightarrow G$ and $p': K^{\mathrm{split}}_0(\mathcal{C}) \rightarrow G'$ be two additive invariants. We say that $p$ is \emph{equivalent} to $p'$ if $\ker(p) = \ker(p')$ and that $p$ is \emph{finer} than $p'$ if $\ker(p) \subseteq \ker(p')$.
        \item We say an additive invariant $p: K^{\mathrm{split}}_0(\mathcal{C}) \rightarrow G$ is \emph{complete} on $\mathcal{C}$ if $\ker(p) = 0$. Alternatively, $p$ is equivalent to the identity on $K^{\mathrm{split}}_0(\mathcal{C})$, and thus $p(M) = p(N)$ if and only if $M \cong N$.
    \end{enumerate}
\end{definition}

We will mostly be interested in the case $\mathcal{C} = \rep \cP$ in Definition~\ref{def:invariant}.

\begin{example}\label{ex:invariants}
    \begin{enumerate}
        \item The first example of an invariant is the \emph{dimension vector} or \emph{Hilbert function}. This associates to each $M \in \rep(\cP)$ the vector 
        $$\underline{\dim}(M) = (\dim_\field M(x))_{x \in \cP} \in \mathbb{Z}^{\cP}.$$
        If e.g. $\cP$ is finite or is a product of finitely many totally ordered sets, then $\rep(\cP)$ is well-known to have finite global dimension. The dimension vector is then complete on the category of projective representations.
        \item Suppose $\cP$ is totally ordered. Then the \emph{barcode} associates $M \in \rep\cP$ to the vector of multiplicities of each $\mathbb{I}_{\langle x,y\langle}$ as a direct summand of $M$. As previously mentioned, the spread modules are the only indecomposables in this case and the category $\rep(\cP)$ is Krull-Remak-Schmidt. Thus the barcode is complete on $\rep \cP$.
        \item The \emph{rank invariant} was the first invariant proposed for multiparameter persistence modules in \cite{CZ}, and a similar concept also appeared independent of persistence theory in \cite{kinser}. This associates each $M \in \rep(\cP)$ to the vector $$\underline{\mathrm{rk}}(M) = (\mathrm{rank} \ M(x,y))_{x \leq y \in \cP} \in \mathbb{Z}^\mathcal{T},$$
        where $\mathcal{T} = \{(x,y) \in \cP^2 \mid x \leq y\}$. This invariant is always finer that the dimension vector, and is equivalent to the barcode when $\cP$ is totally ordered. The work of \cite{BOO} implies that the rank invariant is complete on the category $\add(\cX)$ for $\cX$ the union of the projectives and the hook modules.
    \end{enumerate}
\end{example}

\begin{remark}\label{rem:pwf}
    Each of the examples described in Example~\ref{ex:invariants}, along with the stated completeness results, can be extended to the category of pwf persistence modules. See \cite{CB,BCB,IRT} for (1) and (2) and \cite{BOO} for (3).
\end{remark}

Another source of invariants comes from recording the dimension of Hom-spaces.

\begin{definition}\cite[Definition~4.12]{BBH}\label{def:dim_hom}
     Let $\cX$ be a set of indecomposable objects in $\rep \cP$ and let $\mathcal{C} \subseteq \rep \cP$ be an additive subcategory. We say that an invariant $p: K_0^{\mathrm{split}}(\mathcal{C}) \rightarrow G$ is a \emph{dim-hom invariant} relative to $\cX$ if it is equivalent to the invariant $K_0^{\mathrm{split}}(\mathcal{C}) \rightarrow \prod_{X \in \cX} \mathbb{Z}\cdot [X]$ given by $M \mapsto \left((\dim_\field\Hom_\cP(X,M))\cdot [X]\right)_{X \in \cX}.$
\end{definition}

\begin{example}
    The dimension vector is classically known to be an example of a dim-hom invariant relative to $\cX$ the set of indecomposable projectives. Similarly, if $\cP$ is totally ordered, then the barcode is a dim-hom invariant relative to $\cX$ the set of fp-spread modules (see e.g. \cite[Section~7.1]{BBH}, which explains this in the finite case). Finally, the rank invariant is shown to be a dim-hom invariant in \cite[Section~4]{BOO} (see also \cite[Theorem~7.2]{BBH}).
\end{example}

As defined, many of the invariants we have discussed are difficult to encode over infinite posets. Indeed, if $\cP$ is infinite, then the dimension vector and rank invariant both take values in a group isomorphic to an infinite product of copies of $\mathbb{Z}$. One approach taken in many recent papers (e.g. \cite{AENY,AENY2,BBE,BOO,BOOS,BE,KM,MP,OS,patel,thomas_thesis}) is to utilize tools such as M\"obius inversion and Grothendieck groups to form a ``signed approximation'' or ``generalized persistence diagram''. Informally, this process ``approximates'' a given persistence module by a \emph{pair} of modules in some class in a way that respects a given invariant. To be precise, we consider the following definition motivated by the works \cite{BOO,BOOS,OS}.

\begin{definition}\label{def:signed_decomposition}
    Let $\mathcal{C} \subseteq \rep \cP$ be an additive subcategory and $p: K_0^{\mathrm{split}}(\mathcal{C}) \rightarrow G$ an invariant. Let $\cX \subseteq \ind(\mathcal{C})$. For $M \in \mathcal{C}$, we say that a pair $(M_\cX^0,M_\cX^1)$ is a \emph{signed $\add(\cX)$-decomposition of $p$} if $M_\cX^0 \oplus M_\cX^1 \in \add(\cX)$ and $p(M) = p(M_{\cX}^0) - p(M_\cX^1)$. We further say that $(M_\cX^0,M_\cX^1)$ is a \emph{minimal signed $\add(\cX)$-decomposition of $p$} if $\add(M_\cX^0) \cap \add(M_\cX^1) = 0$.
\end{definition}

Given a canonical choice of signed decomposition $(M_\cX^0,M_\cX^1)$ for each $M \in \mathcal{C}$, one automatically obtains an invariant by recording the multiplicities of each $X \in \mathcal{X}$ as a direct summand of $M_\cX^0$ and/or $M_\cX^1$. Furthermore, it has been shown in \cite[Section~8.3]{OS} and \cite[Sections~6 and~7]{BOOS} that signed decompositions are useful in establishing stability results for invariants coming from (relative) homological algebra. We discuss signed decompositions further in section~\ref{sec:homological}.


\section{General theory of exact structures}\label{sec:exact}

In this section, we recall several definitions and results related to exact structures\footnote{See Remark~\ref{rem:structure_vs_category} for a comparison with the notion of an exact \emph{category}.} on categories of persistence modules (and more generally on abelian categories). We note that the definition of an exact structure is more generally formulated for additive categories, but we restrict our attention to the special case of abelian categories in order to streamline the exposition. We refer to \cite{BBHG,buhler,DRSS,EJ} and the references therein for additional background information and historical context.

\subsection{Exact structures}

Fix an abelian category $\cA$. We write monomorphisms in $\cA$ as $X \mono{f} Y$ (or just $X \mono{} Y$ when the morphism $f$ is implied or unimportant) and epimorphisms in $\cA$ as $Y \epi{g} Z$ (or just $Y \epi{} Z$ when the morphism $g$ is implied or unimportant). We also adopt the following standard terminology.

\begin{definition}\label{def:admissible}
    Let $\cE$ be a class of short exact sequences in $\cA$ which is closed under isomorphisms.
    \begin{enumerate}
        \item We say a short exact sequence $\eta$ in $\cA$ is \emph{$\cE$-exact} if $\eta \in \cE$.
        \item Let $\eta = (\cdots \xrightarrow{f_0} X_1 \xrightarrow{f_1} X_2 \xrightarrow{f_2} \cdots)$ be a long exact sequence in $\cA$. We say that $\eta$ is $\cE$-exact if the sequences $\ker f_i \mono{} X \epi{} \im f_i$ are $\cE$-exact for all $i$.
        \item Let $A \xrightarrow{h} B$ be a morphism in $\cA$. We say that $h$ is \emph{$\cE$-admissible} if $\ker h \mono{} A \epi{} \im h$ and $\im h \mono{} B \epi{} \coker h$ are both $\cE$-exact.
    \end{enumerate}
\end{definition}
 
 We now recall the definition of an exact structure. We use the set of axioms found in \cite[Section~2]{buhler}, which are shown in \cite{keller} to be equivalent to the original axioms found in Quillen's seminal work \cite{quillen}.

 \begin{definition}\label{def:exact}
Let $\cE$ be a class of short exact sequences in $\cA$ which is closed under isomorphisms. Then $\cE$ is said to be an \emph{exact structure} on $\mathcal{A}$ if all of the following hold:
 \begin{enumerate}
	\item For all $X \in \cA$ the identity $1_X$ is $\cE$-admissible.
        \item If $X \mono{f} Y$ and $Y \mono{g} Z$ are both $\cE$-admissible, then $X \mono{g \circ f} Z$ is $\cE$-admissible.
        \item If $X \epi{f} Y$ and $Y \epi{g} Z$ are both $\cE$-admissible, then $X\epi{g\circ f} Z$ is $\cE$-admissible.
        \item If $X \mono{f} Y$ is $\cE$-admissible and $X\xrightarrow{h} W$ is any morphism in $\cA$, then the pushout $f_h$ of $f$ along $h$ is $\cE$-admissible (and a monomorphism since $\cA$ is abelian).
        \item If $Y \epi{g} Z$ is $\cE$-admissible and $W \xrightarrow{h}Z$ is any morphism in $\cA$, then the pullback $g^h$ of $g$ along $h$ is $\cE$-admissible (and an epimorphism since $\cA$ is abelian).
	\end{enumerate}
\end{definition}

\begin{example}
    When viewed as classes of short exact sequences, the exact structures on $\cA$ can be ordered by containment \cite{BHLR}. The minimal exact structure with respect to this relation contains only the split short exact sequences, and is denoted $\cE_{\mathrm{split}}$. The maximal exact structure with respect to this relation contains all short exact sequences, and is denoted $\cE_{\mathrm{all}}$. This is also called the \emph{standard} exact structure.
\end{example}

\begin{remark}\label{rem:structure_vs_category}
    There are two conventions of terminology that are both used in the literature. One is to refer to $\cE$ as an exact structure on $\cA$ (as we have done here), and the other is to refer to the pair $(\cA,\cE)$ as an \emph{exact category}. We use the term exact structure throughout the present paper since we will often consider multiple exact structures on the same abelian category.
\end{remark}

\begin{remark}\label{rem:admissible}
    It is an immediate consequence of the definition
    that any of the following classes uniquely characterizes an exact structure $\cE$.
    \begin{enumerate}
        \item The class of short $\cE$-exact sequences.
        \item The class of $\cE$-admissible monomorphisms.
        \item The class of $\cE$-admissible epimorphisms.
    \end{enumerate}
\end{remark}

\begin{remark}\label{rem:closed}
    Alternatively, one can see an exact structure as a ``closed'' additive subbifunctor of $\mathrm{Ext}^1_{\cA}(-,-)$. See e.g. \cite[Theorem~1.1]{BH} and \cite[Corollary~1.6]{DRSS} for additional details on this perspective.
\end{remark}

We now recall a family of exact structures defined in \cite[Section~1]{AS} for finite-dimensional algebras and in \cite[Section~1.3]{DRSS} for arbitrary abelian categories.

\begin{definition}\label{def:AS}
    Let $\cX$ be a class of objects in $\cA$.
    \begin{enumerate}
        \item We denote by $\cF_\cX$ the class of short exact sequences $\eta = (L \mono{f} M \epi{g} N)$ in $\cA$ for which the induced sequences
    $$0 \rightarrow \Hom_\cA(X,L) \xrightarrow{f_*} \Hom_\cA(X,M) \xrightarrow{g_*} \Hom_\cA(X,N) \rightarrow 0$$
    are exact for all $X \in \cX$. Equivalently, $\eta \in \cF_\cX$ if and only if $g_* = \Hom_\cA(X,g)$ is an epimorphism for all $X \in \cX$.
         \item We denote by $\cF^\cX$ the class of short exact sequences $\eta = (L \mono{f} M \epi{g} N)$ in $\cA$ for which the induced sequences
    $$0 \rightarrow \Hom_\cA(N,X) \xrightarrow{g^*} \Hom_\cA(M,X) \xrightarrow{f^*} \Hom_\cA(L,X) \rightarrow 0$$
    are exact for all $X \in \cX$. Equivalently, $\eta \in \cF^\cX$ if and only if $f^* = \Hom_\cA(f,X)$ is a monomorphism for all $X \in \cX$.
    \end{enumerate}
\end{definition}

\begin{remark}
    With the exception of Example~\ref{ex:spreads_infinite}, we work only with classes of the form $\cF_\cX$, as opposed to $\cF^\cX$. We note, however, that the results presentable for $\cF_\cX$ also hold in their dual forms for $\cF^\cX$.
\end{remark}

\begin{remark}
    We note that if $\cX$ is additive then $\cF_\cX = \cF_{\ind \cX}$. It is therefore common practice to replace $\cX$ with $\ind(\cX)$ when applying Definition~\ref{def:AS}.
\end{remark}

We will typically apply Definition~\ref{def:AS} only when all of the objects in $\cX$ are indecomposable, but it will be useful to use the more general formulation in Section~\ref{sec:infinite_exact_structure}.

\begin{proposition}\label{prop:exact_gen}\cite[Proposition~1.7]{DRSS}\cite[Proposition~1.7]{AS}
    Let $\cX$ be a class of objects in $\cA$. Then $\cF_\cX$ is an exact structure.
\end{proposition}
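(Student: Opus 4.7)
The plan is to verify directly the five axioms in Definition~\ref{def:exact} using the characterization of $\cF_\cX$ as those short exact sequences on which $\Hom_\cA(X,-)$ remains exact for every $X \in \cX$. Throughout, I will use repeatedly that $\Hom_\cA(X,-)$ is always left exact, so the only nontrivial content is surjectivity of the induced map on the right end of the Hom-sequence. I will also use that any split short exact sequence lies in $\cF_\cX$, since $\Hom_\cA(X,-)$ is additive.

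Axiom (1) is immediate: the sequences $0 \mono{} X \epi{1_X} X$ and $X \mono{1_X} X \epi{} 0$ associated with $\ker 1_X$ and $\coker 1_X$ are split, hence in $\cF_\cX$. For axiom (3), given $\cF_\cX$-admissible epimorphisms $Y \epi{f} Z$ and $Z \epi{g} W$, I want to show that $\Hom_\cA(X, gf)$ is surjective for each $X \in \cX$: any $\varphi : X \to W$ lifts through $g$ to $\tilde\varphi : X \to Z$ (since $g$ is $\cF_\cX$-admissible), which lifts through $f$ to $X \to Y$ (since $f$ is $\cF_\cX$-admissible). Axiom (2) is the more delicate of the composition axioms: given $\cF_\cX$-admissible monomorphisms $X \mono{f} Y$ and $Y \mono{g} Z$, apply the $3\times 3$ lemma to produce a short exact sequence $0 \to \coker f \to \coker (gf) \to \coker g \to 0$. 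For $X' \in \cX$ and $\psi : X' \to \coker (gf)$, compose with $\coker(gf) \twoheadrightarrow \coker g$ to obtain $X' \to \coker g$, lift to $\tilde\psi : X' \to Z$ using $\cF_\cX$-admissibility of $g$, and correct the difference $\psi - \overline{\tilde\psi}$, which factors through $\coker f \hookrightarrow \coker (gf)$, by lifting the resulting map $X' \to \coker f$ through $Y \epi{} \coker f$ and pushing back into $Z$.

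For axiom (4), let $X \mono{f} Y$ be $\cF_\cX$-admissible with cokernel $C$, and let $h : X \to W$ be arbitrary. The pushout produces a short exact sequence $W \mono{f_h} Y \sqcup_X W \epi{} C$ with the \emph{same} cokernel $C$, together with a morphism $Y \sqcup_X W \to Y$... actually more usefully, a morphism $Y \to Y \sqcup_X W$ making the standard pushout square commute. For $X' \in \cX$ and $\alpha : X' \to C$, the original admissibility gives a lift $\tilde\alpha : X' \to Y$, and composing with $Y \to Y \sqcup_X W$ produces a lift of $\alpha$ to $Y \sqcup_X W$; hence $f_h$ is $\cF_\cX$-admissible. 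Axiom (5) is dual and actually easier: given $Y \epi{g} Z$ with kernel $K$, and $h : W \to Z$, the pullback is $K \mono{} Y \times_Z W \epi{g^h} W$ with the same kernel $K$, and for $\varphi : X' \to W$, the composition $h \varphi : X' \to Z$ lifts to $X' \to Y$, which combines with $\varphi$ via the universal property to yield a lift $X' \to Y \times_Z W$ of $\varphi$.

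I expect the main obstacle to be axiom (2), since it requires knowing the cokernel sequence of the composition fits into a $3\times 3$ diagram with the two given cokernel sequences, and then running a two-step diagram chase rather than a one-step lifting. All the other axioms reduce immediately to a single surjectivity check on the Hom-level using the universal property at hand. An alternative, more abstract route would be to invoke Remark~\ref{rem:closed} and observe that $\cF_\cX$ is precisely the kernel of the natural transformation $\Ext^1_\cA(-,-) \to \prod_{X \in \cX} \Ext^1_\cA(X,-) \otimes_{\mathbb{Z}} \Hom_\cA(X,-)^\vee$, which is a closed additive subbifunctor of $\Ext^1_\cA$, and then apply \cite[Theorem~1.1]{BH} or \cite[Corollary~1.6]{DRSS}; but the direct verification is short enough and more self-contained.
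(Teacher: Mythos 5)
Your verification is correct, and you correctly identify axiom (2) as the delicate point: composing admissible monomorphisms requires the $3\times 3$ lemma to produce the short exact sequence $\coker f \mono{} \coker(gf) \epi{} \coker g$, after which the two-step lift (first through $Z \epi{} \coker g$, then correcting the discrepancy through $Y \epi{} \coker f$ and pushing forward along $g$) goes through as you describe, because the square $Y \to Z \to \coker(gf)$, $Y \to \coker f \to \coker(gf)$ commutes. Note that the paper does not actually prove this proposition: it cites \cite[Proposition~1.7]{AS} and \cite[Proposition~1.7]{DRSS}, both of which argue in the language of ``closed'' additive subbifunctors of $\Ext^1_\cA(-,-)$, and the Remark immediately following only observes that a direct axiom verification is ``a straightforward exercise.'' Your proposal is precisely that exercise carried out. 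The subbifunctor route (Butler--Horrocks / DRSS, cf.\ Remark~\ref{rem:closed}) buys a conceptual bijection between exact structures and closed additive subbifunctors, which is leveraged elsewhere in the relative-homological-algebra framework, whereas your direct verification is more elementary and self-contained, needing only left-exactness of $\Hom_\cA(X,-)$, one $3\times 3$ lemma, and the universal properties of pushout and pullback. One small caveat: your closing aside writing $\cF_\cX$ as the kernel of a natural transformation landing in $\prod_X \Ext^1_\cA(X,-)\otimes_{\mathbb{Z}}\Hom_\cA(X,-)^\vee$ implicitly presumes finiteness of the Hom-spaces; the correct general statement is that $\cF_\cX(C,A)$ is $\bigcap_{X,\,\phi\colon X\to C}\ker\bigl(\phi^*\colon \Ext^1_\cA(C,A)\to \Ext^1_\cA(X,A)\bigr)$, but since you explicitly set this route aside it has no bearing on the proof you actually give.
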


\begin{remark}
   The references \cite{AS,DRSS} prove Proposition~\ref{prop:exact_gen} using the language of ``closed'' additive subfunctors, but it is a straightforward excercise to prove directly that the axioms are satisfied. We also note that \cite[Proposition~1.7]{AS} is stated only for $\cA = \mods\Lambda$ with $\Lambda$ an artin algebra, but, as observed in \cite[Section~1.3]{DRSS}, the statement and its proof are both valid in any abelian category.
\end{remark}

\subsection{Relative projective objects}

We will generally characterize exact structures in terms of their corresponding relative projective objects, defined as follows.

\begin{definition}\label{def:projectives}
	Let $\cE$ be an exact structure on $\cA$. We say an object $U \in \cA$ is \emph{$\cE$-projective}, in symbols $U \in \proj(\cE)$, if $g_* = \Hom_\cA(U,g)$ is an epimorphism for all $\cE$-admissible epimorphisms $Y \epi{g} Z$. Equivalently, $U \in \proj(\cE)$ if and only if $\Hom_\cA(U,-)$ sends $\cE$-exact sequences to exact sequences.
\end{definition}

\begin{notation}
    As a special case, we denote $\proj(\cA) = 
    \proj(\cE_{\max})$. This is the class of objects which are projective in $\cA$ is the classical sense.
\end{notation}

The following is a straightforward consequence of the definitions.

\begin{lemma}\label{lem:F_proj_included}
    Let $\cX$ be a class of objects in $\cA$. Then $\add(\cX \cup \proj(\cA)) \subseteq \proj(\cF_\cX)$.
\end{lemma}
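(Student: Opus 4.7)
The plan is to verify the three-step inclusion chain
\[ \cX \cup \proj(\cA) \;\subseteq\; \proj(\cF_\cX) \quad\text{and}\quad \proj(\cF_\cX) = \add(\proj(\cF_\cX)), \]
from which the stated inclusion follows immediately.

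First, I would show $\cX \subseteq \proj(\cF_\cX)$. By the equivalent characterization in Definition~\ref{def:AS}(1), an arbitrary $\cF_\cX$-admissible epimorphism $Y \epi{g} Z$ is exactly one for which $\Hom_\cA(X,g)$ is an epimorphism for every $X \in \cX$. That is precisely the defining condition for each such $X$ to lie in $\proj(\cF_\cX)$, per Definition~\ref{def:projectives}.

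Next, I would observe $\proj(\cA) \subseteq \proj(\cF_\cX)$. If $P \in \proj(\cA)$, then $\Hom_\cA(P,-)$ is exact on all short exact sequences in $\cA$; in particular it is exact on the subclass $\cF_\cX$, so $P \in \proj(\cF_\cX)$.

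Finally, I would verify that $\proj(\cF_\cX)$ is closed under finite direct sums and direct summands. For direct sums, the natural isomorphism $\Hom_\cA(U \oplus V,-) \cong \Hom_\cA(U,-) \oplus \Hom_\cA(V,-)$ sends a pair of epimorphisms to an epimorphism, so the $\cF_\cX$-projectivity of $U$ and $V$ transfers to $U\oplus V$. For summands, if $U = V \oplus W$ with $\Hom_\cA(U, g)$ epic, then the splitting of $\Hom_\cA(U, g)$ as a direct sum realizes $\Hom_\cA(V, g)$ as a direct summand of an epimorphism, hence itself epic. Combining these three observations yields $\add(\cX \cup \proj(\cA)) \subseteq \proj(\cF_\cX)$.

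This proof is essentially a bookkeeping exercise; there is no serious obstacle. The only subtle point is the insistence on the equivalent formulation of $\cF_\cX$ in terms of the surjectivity of $\Hom_\cA(X,g)$, which eliminates any need to re-prove exactness of the left-hand portion of the induced sequence (this is automatic from left-exactness of $\Hom$ in $\cA$).
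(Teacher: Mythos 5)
Your proof is correct and is exactly the verification the paper has in mind when it calls this ``a straightforward consequence of the definitions'' (the paper gives no proof at all). The three-step decomposition — $\cX \subseteq \proj(\cF_\cX)$ directly from Definition~\ref{def:AS}(1), $\proj(\cA) \subseteq \proj(\cF_\cX)$ because $\cF_\cX$ is a subclass of all short exact sequences, and closure of $\proj(\cF_\cX)$ under finite sums and summands via the additivity of $\Hom_\cA(-,g)$ — is the canonical route, and your handling of the summand case (an epimorphism of abelian groups restricted to a direct-sum factor is still epi) is sound.
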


As a partial converse to Lemma~\ref{lem:F_proj_included}, we have the following.

\begin{theorem}\label{thm:AS_1}\cite[Proposition~1.10]{AS}
    Let $\Lambda$ be an artin algebra and $\cA = \mods \Lambda$, and let $\cX$ be a class of objects in $\cA$. Then $\add(\cX \cup \proj(\cA)) = \proj(\cF_\cX)$.
\end{theorem}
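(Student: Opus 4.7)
The inclusion $\add(\cX \cup \proj(\cA)) \subseteq \proj(\cF_\cX)$ has already been recorded as Lemma~\ref{lem:F_proj_included}, so the content of the theorem lies entirely in the reverse inclusion $\proj(\cF_\cX) \subseteq \add(\cX \cup \proj(\cA))$. The plan is to argue by contradiction using Auslander-Reiten theory, which is available because $\cA = \mods\Lambda$ is Krull-Remak-Schmidt and every indecomposable non-projective module is the target of an almost split sequence.

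Both $\proj(\cF_\cX)$ and $\add(\cX \cup \proj(\cA))$ are closed under direct summands in the Krull-Remak-Schmidt category $\cA$, so it suffices to prove that every indecomposable $P \in \proj(\cF_\cX)$ lies in $\add(\cX \cup \proj(\cA))$. Suppose for contradiction that $P$ is indecomposable, lies in $\proj(\cF_\cX)$, but does \emph{not} belong to $\add(\cX \cup \proj(\cA))$. Then $P \notin \proj(\cA)$, so there is an almost split sequence
$$\eta: \quad 0 \to \tau P \to E \xto{\pi} P \to 0,$$
and moreover $P$ is not a direct summand of any $X \in \cX$ (otherwise $P \in \add(\cX)$). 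I would next show that $\eta \in \cF_\cX$: fix $X \in \cX$ and $h : X \to P$. Since $P$ is indecomposable and not a direct summand of $X$, the map $h$ cannot be a split epimorphism, and hence it factors through $\pi$ by the defining property of almost split sequences. This yields surjectivity of $\Hom_\cA(X,E) \to \Hom_\cA(X,P)$ for every $X \in \cX$, so $\eta \in \cF_\cX$ by Definition~\ref{def:AS}(1).

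Applying the hypothesis $P \in \proj(\cF_\cX)$ to $\eta$ then forces $\Hom_\cA(P,E) \to \Hom_\cA(P,P)$ to be surjective; lifting $1_P$ produces a section of $\pi$, which splits $\eta$ and contradicts the fact that almost split sequences are non-split. The main obstacle is verifying $\eta \in \cF_\cX$, which is precisely the step that crucially uses both the indecomposability of $P$ and the existence of an almost split sequence ending at $P$. This is also why the hypothesis that $\Lambda$ is an artin algebra is essential: one needs both Krull-Remak-Schmidt and almost split sequences ending at every non-projective indecomposable, and neither is automatic in more general abelian categories. This is exactly what motivates the adjunction-based lifting strategy for infinite posets developed later in the paper.
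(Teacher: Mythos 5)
Your argument is correct and is essentially the same Auslander--Reiten-theoretic proof given in the cited source [AS, Propositions~1.9--1.10], which the paper itself does not reprove but summarizes in Remark~\ref{rem:AR}. You reduce to indecomposables via Krull--Remak--Schmidt, observe that a non-projective indecomposable $P$ not lying in $\add(\cX\cup\proj(\cA))$ is not a summand of any $X\in\cX$, conclude via the right almost split property that the almost split sequence $\eta$ ending in $P$ is $\cF_\cX$-exact, and finally derive a contradiction from $P\in\proj(\cF_\cX)$ by splitting $\eta$ — this is precisely the argument the paper alludes to.
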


\begin{remark}\label{rem:AR}
    The proof of \cite[Proposition~1.10]{AS} is based on the theory of \emph{almost split sequences} (also known as Auslander-Reiten sequences). Indeed, it is shown in \cite[Proposition~1.9]{AS} that an indecomposable nonprojective (in the clasical sense) module $U \in \mods \Lambda$ is $\cF_\cX$-projective if and only if the almost split sequence ending in $U$ is not $\cF_\cX$-exact. As discussed in \cite[Section~2]{DRSS}, this result thus generalizes to any Krull-Remak-Schmidt category which ``has almost split pairs''. There are, however, many  interesting examples of abelian categories which do not have almost split pairs, including the category $\rep(\mathbb{R}^n)$\footnote{There is some subtlety here. Indeed, the work of Auslander \cite{AR_fp} says that, for any $\cP$, every object in $\rep \cP$ is the third term of an almost split sequence in $\Rep \cP$, but there is no requirement that the other terms of the sequence be finitely presentable.}.
\end{remark}

For emphasis, we record the following.

\begin{corollary}\label{cor:AS_1}\cite[Proposition~1.10]{AS}
    Let $\cP$ be a finite poset and let $\cX$ be a class of objects in $\rep \cP$. Then $\add(\cX \cup \proj(\rep \cP)) = \proj(\cF_\cX)$.
\end{corollary}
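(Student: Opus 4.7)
The plan is to deduce this corollary as an immediate specialization of Theorem~\ref{thm:AS_1} once we verify that the hypotheses on the ambient abelian category are met. Concretely, I would argue as follows.

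First, I would recall from the preliminaries that, for any poset $\cP$, there is an equivalence of categories $\Rep \cP \simeq \mathrm{Mod}(\field_\cP)$ that restricts to an equivalence $\rep \cP \simeq \mods(\field_\cP)$. When $\cP$ is finite, the incidence algebra $\field_\cP$ has vector-space basis the (finite) set of segments in $\cP$, so it is a finite-dimensional $\field$-algebra. In particular, $\field_\cP$ is an artin algebra (it is a finitely generated module over its center, which contains $\field$), and every finitely presentable $\field_\cP$-module is finitely generated. Thus $\cA = \rep \cP \simeq \mods \field_\cP$ fits the framework of Theorem~\ref{thm:AS_1}.

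Second, I would apply Theorem~\ref{thm:AS_1} to the class $\cX \subseteq \rep \cP$ under the equivalence. Since equivalences of categories preserve additive hulls, the classes $\proj(\rep \cP)$ and $\proj(\cA)$ of classical projectives, and the class $\cF_\cX$ of exact sequences (which is defined purely in terms of $\Hom$ and short exact sequences), the conclusion
\[
\add(\cX \cup \proj(\rep \cP)) = \proj(\cF_\cX)
\]
transports verbatim from $\mods \field_\cP$ to $\rep \cP$.

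I do not expect any real obstacle here; the work has already been done in Theorem~\ref{thm:AS_1} (via the Auslander--Reiten-theoretic argument indicated in Remark~\ref{rem:AR}), and the only thing to check is that a finite poset yields a finite-dimensional, hence artinian, incidence algebra so that the hypothesis of the earlier theorem is in force. The one mild subtlety worth flagging explicitly in the write-up is that the inclusion $\add(\cX \cup \proj(\rep \cP)) \subseteq \proj(\cF_\cX)$ is already established in full generality by Lemma~\ref{lem:F_proj_included}, so only the reverse inclusion requires the finiteness hypothesis, and it is precisely that inclusion which Theorem~\ref{thm:AS_1} supplies through the existence of almost split sequences over the artin algebra $\field_\cP$.
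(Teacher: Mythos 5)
Your proposal is correct and follows exactly the route the paper takes: since a finite poset has a finite-dimensional (hence artin) incidence algebra $\field_\cP$ with $\rep\cP \simeq \mods\field_\cP$, the statement is an immediate specialization of Theorem~\ref{thm:AS_1}. Your observation that only the inclusion $\proj(\cF_\cX) \subseteq \add(\cX \cup \proj(\rep\cP))$ actually uses the finiteness hypothesis (the other being Lemma~\ref{lem:F_proj_included}) is also accurate and consistent with the paper's Remark~\ref{rem:AR}.
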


It is not immediately clear whether Corollary~\ref{cor:AS_1} holds over arbitrary infinite posets, since the category $\rep \cP$ does not generally have almost split pairs in this case (see the footnote to Remark~\ref{rem:AR} for a clarification of this claim). We will, however, show in Section~\ref{sec:infinite_exact_structure} that the result holds for certain choices of $\cX$ in the case where $\cP$ is a finite product of totally ordered sets.


\section{Homological invariants}\label{sec:approximations}

In this section, we recall the notions of covers, resolutions, and Grothendieck groups relative to an exact structure. We follow much of the exposition of \cite[Section~4.1]{BBH}, but we work only in the category $\rep \cP$ for $\cP$ an arbitrary poset\footnote{Note that \cite[Section~4.1]{BBH} is written for the category $\mods \Lambda$ for $\Lambda$ a finite-dimensional algebra over a field. Thus our setting is neither strictly more nor strictly less general.}. See also \cite[Section~2]{CGRST} for a detailed treatise over finite posets which does not rely on the theory of finite-dimensional algebras. We also fix a class $\cX \subseteq \rep \cP$ such that $\proj(\rep \cP) \subseteq \add(\cX)$.

\subsection{Approximations and resolutions}

\begin{definition}\label{def:approx}
     Let $M \in \rep \cP$. An \emph{ $\add(\cX)$-precover} (also known as a \emph{ right $\add(\cX)$-approx-\linebreak imation}) of $M$ is an object $U \in \add(\cX)$ and an $\cF_\cX$-admissible epimorphim $f: U \rightarrow M$. We say that $f$ is an \emph{ $\add(\cX)$-cover} (also known as a \emph{minimal right $\add(\cX)$-approximation} of $M$) if in addition every endomorphism $g: U \rightarrow U$ such that $f = f \circ g$ is an automorphism. We say that $\add(\cX)$ is \emph{precovering}
    if every object of $\rep \cP$ admits an $\add(\cX)$-precover. If in addition $\add(\cX) = \proj(\cF_\cX)$, we say that $\cF_\cX$ \emph{has enough projectives.}
\end{definition}

\begin{remark}
    The category $\rep(\cP)$ has $\add(\cX)$-precovers for any finite set $\cX$. This is the only setting considered in \cite{BBH}.
\end{remark}

Since $\rep \cP$ is Krull-Remak-Schmidt, the following is well-known.

\begin{lemma}\label{lem:cover}
    Let $M \in \rep(\cP)$. Then $M$ admits an $\add(\cX)$-cover if and only if $M$ admits an $\add(\cX)$-precover.
    Moreover, if $f: U \rightarrow M$ and $g: V \rightarrow M$ are both $\add(\cX)$-covers of $M$, then there is an isomorphism $h: U \rightarrow V$ such that $f = g \circ h$.
\end{lemma}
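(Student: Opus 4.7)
The plan is to combine the Krull-Remak-Schmidt structure of $\rep\cP$ with a standard pruning argument. Only the nontrivial direction of the first claim requires proof, since a cover is by definition a precover. Suppose $f: U \to M$ is an $\add(\cX)$-precover; by definition of $\add(\cX)$ and the KRS property of $\rep\cP$ noted after Corollary~\ref{cor:fp_spread}, $U$ admits an essentially unique decomposition as a finite direct sum of indecomposables, so the total number of indecomposable summands of $U$ is a well-defined finite invariant.

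First I would prune: among all direct sum decompositions $U = U_0 \oplus U_1$ with $f|_{U_1} = 0$, choose one maximizing the number of indecomposable summands of $U_1$; such a choice exists by the finiteness of the total count. Set $f_0 := f|_{U_0}: U_0 \to M$, and let $\pi_0: U \to U_0$ be the projection, so that $f = f_0 \circ \pi_0$. Next I would verify that $f_0$ is still an $\cF_\cX$-admissible epimorphism: surjectivity is immediate from that of $f$, and for any $h: X \to M$ with $X \in \cX$, a lift $\tilde h: X \to U$ through $f$ produces a lift $\pi_0 \circ \tilde h$ through $f_0$, so $\Hom_\cP(X, f_0)$ is surjective for all $X \in \cX$.

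Then I would show that $f_0$ is a cover. Given $g: U_0 \to U_0$ with $f_0 \circ g = f_0$, I argue $g$ is an automorphism by contradiction. Otherwise, using the semiperfectness of $\End(U_0)$ (inherited from the local endomorphism rings of the indecomposable summands), a Fitting-type argument produces a nonzero idempotent $e \in \End(U_0)$ with $f_0 \circ e = 0$. Then $eU_0$ is a nonzero direct summand of $U_0$ on which $f_0$ vanishes, yielding a refined decomposition $U = (1-e)U_0 \oplus (eU_0 \oplus U_1)$ with strictly more indecomposable summands lying in the kernel of $f$, contradicting the maximality choice.

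For uniqueness, given two covers $f: U \to M$ and $g: V \to M$, the precover property applied to each yields $h: U \to V$ and $h': V \to U$ with $g \circ h = f$ and $f \circ h' = g$. Then $f \circ (h' \circ h) = f$ and $g \circ (h \circ h') = g$, so by the minimality condition on covers both $h' \circ h$ and $h \circ h'$ are automorphisms; this forces $h$ to admit both a left and a right inverse, hence to be an isomorphism with $g \circ h = f$. The main technical obstacle is the Fitting-type step producing an idempotent in $\End(U_0)$ that kills $f_0$; this relies on the semiperfectness of endomorphism rings of finite direct sums of indecomposables in a KRS category and on the essential finiteness of the decomposition of $U$, which is precisely what the assumption $U \in \add(\cX)$ guarantees.
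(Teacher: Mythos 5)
Your proposal is correct and takes essentially the same approach as the paper's sketch: both rely on the Krull--Remak--Schmidt property of $\rep\cP$ together with Fitting's Lemma (applicable because endomorphism rings of finite sums of indecomposables with local endomorphism rings are semiperfect) and a finite-descent argument on the number of indecomposable summands. The only cosmetic difference is organizational: the paper iteratively shrinks the precover one Fitting step at a time, while you first select a decomposition maximizing the summands killed by $f$ and then use Fitting to derive a contradiction from non-minimality; the uniqueness argument is identical.
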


\begin{proof}[Sketch of Proof]
    For $f: U \rightarrow M$ an $\add(\cX)$-precover and $h: U \rightarrow U$ a non-isomorphism with $f = f\circ h$, Fitting's Lemma yields a decomposition $U = \mathrm{im}(f^n) \oplus \ker(f^n)$ for some $n$. (See e.g. \cite[Lemmas~5.2 and~5.3]{krause_proj} for a proof that Fitting's Lemma applies.) Then $h: \mathrm{im}(f^n) \rightarrow M$ is an $\add(\cX)$-precover with strictly fewer indecomposable direct summands than $U$, meaning that an $\add(\cX)$-cover must exist. Now given two covers $f$ and $g$, the definition of a precover yields morphisms $h: U \rightarrow V$ and $h': V \rightarrow U$ with $f = g\circ h$ and $g = f\circ h'$. Then $h\circ h'$ and $h' \circ h$ are both isomorphisms by the definition of a cover.
\end{proof}

By definition, the standard exact structure $\cE_{\mathrm{all}} = \cF_{\proj(\rep \cP)}$ on $\rep \cP$ has enough projectives. In this case, any epimorphism from an object in $\proj(\rep \cP)$ to $M$ is a $\proj(\rep \cP)$-precover. For other choices of $\cX$, however, there may be epimorphisms from $\add(\cX)$ to $M$ which are not $\cF_\cX$ admissible, see e.g. \cite[Example~4.3(2)]{BBH}.

We now recall the following definitions, each of which recovers the corresponding classical notion from homological algebra if $\add(\cX) = \proj(\rep \cP)$.

\begin{definition}\label{def:resolution}
Suppose that $\add(\cX)$ is precovering and let $M \in \rep \cP$.
\begin{enumerate}
    \item An \emph{$\add(\cX)$-resolution} of $M$ is a $\cF_\cX$-exact sequence of the form
	$U_\bullet = (\cdots \xrightarrow{q_2} U_1 \xrightarrow{q_1} U_0)$
	such that $M = \coker(q_1)$ and each $U_i \in \add(\cX)$.
	We say $U_\bullet$ is a \emph{minimal} $\add(\cX)$-resolution if all of the $q_i$ and the quotient map $U_0 \rightarrow M$ are $\add(\cX)$-covers of their images. We say $U_\bullet$ is \emph{finite} if there exists some $j$ such that $U_{i} = 0$ for all $i > j$.
		\item Let $U_\bullet$ be a minimal $\add(\cX)$-resolution of $M$. Then the \emph{$\add(\cX)$-dimension} of $M$ is $\dim_{\cX}(M) = \min(\{j \mid U_{j+1} = 0\} \cup \{\infty\})$.
		\item The \emph{$\add(\cX)$-global dimension} of $\field_\cP$ is $\dim_\cX(\field_\cP) = \sup_{M \in \rep \cP} \dim_{\cX}(M).$ Furthermore, we say that the $\add(\cX)$-global dimension of $\field_\cP$ is \emph{realisably-infinite}\footnote{The term ``properly infinite'' is used in \cite{BBH} to describe this property.} if there exists $M \in \rep \cP$ such that $\dim_{\cX}(M) = \infty$.
	\end{enumerate}
\end{definition}

\begin{remark}\label{rem:unique}
   Suppose that $\add(\cX)$ is precovering. It follows from Lemma~\ref{lem:cover} that the minimal $\add(\cX)$-resolution of a given $M \in \rep \cP$ is unique up to isomorphism.
\end{remark}

\begin{remark}\label{rem:global}
    Note that if $\cP$ is finite, then the $\add(\cX)$-global dimension of $\field_\cP$ is infinite if and only if it is realisably infinite. See Remark~\ref{rem:generator} below.
\end{remark}

\begin{remark}
	In Definition~\ref{def:resolution}(1), the quotient map $U_0 \ \epi{} \coker(q_1) = M$ will automatically be $\cF_\cX$-admissible by Definition~\ref{def:admissible}(2).
\end{remark}

\begin{definition}\label{def:grothendieck}
	Let $F$ be the free abelian group generated by the symbols $[M]$ for every isomorphism class of $M \in \rep \cP$. We denote by $H_\cX$ the subgroup generated by
	$$\{[M]-[N]+[L] \mid M \mono{\phantom{f}} N \epi{\phantom{g}} L \text{ is $\cF_\cX$-exact}\}.$$
	The quotient $K_0(\cP,\cX)= F/H_\cX$ is called the \emph{Grothendieck group} of $\cP$ relative to $\cX$. Given $M \in \rep \cP$, we denote by $[M]_\cX$ (or just $[M]$ when $\cX$ is clear from context) the representative of $M$ in the Grothendieck group $K_0(\cP,\cX)$.
\end{definition}

The proof of \cite[Proposition~4.9]{BBH} readily generalizes to a proof of the following.

\begin{proposition}\label{prop:grothendieck}
	Suppose that $\add(\cX)$ is precovering and that the $\add(\cX)$-global dimension of $\field_\cP$ is not realisably infinite. Then:
	\begin{enumerate}
		\item The group $K_0(\cP,\cX)$ is free abelian with basis $\{[U] \mid U \in \ind(\add \cX)\}$.
		\item For any $M \in \rep \cP$ and any finite $\add(\cX)$-resolution $U_\bullet$ of $M$, one has $$[M]_\cX = \sum_{i = 0}^\infty (-1)^i[U_i]_\cX.$$
	\end{enumerate}
\end{proposition}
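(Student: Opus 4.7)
The plan is to establish part (2) first by direct manipulation of the relations defining $K_0(\cP,\cX)$, and then to leverage (2) together with a horseshoe-type argument to deduce (1).

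For (2), I would proceed by unraveling the resolution into short exact sequences. Given a finite $\add(\cX)$-resolution $U_\bullet$ of length $n$, write $K_0 = M$ and $K_i = \ker(U_{i-1} \to U_{i-2})$ for $i \geq 1$, with $K_n = U_n$. The $\cF_\cX$-exactness of $U_\bullet$ (Definition~\ref{def:admissible}(2)) says precisely that each short sequence $K_{i+1} \mono{} U_i \epi{} K_i$ is $\cF_\cX$-exact, so the defining relation of $K_0(\cP,\cX)$ gives $[K_i]_\cX = [U_i]_\cX - [K_{i+1}]_\cX$. Iterating from $i=0$ produces the claimed formula $[M]_\cX = \sum_{i=0}^{n}(-1)^i [U_i]_\cX$.

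For (1), let $G$ denote the free abelian group with basis $\{[U]^G \mid U \in \ind(\add \cX)\}$, and consider the canonical homomorphism $\phi \colon G \to K_0(\cP,\cX)$ sending $[U]^G$ to $[U]_\cX$. Surjectivity of $\phi$ is immediate from (2): every $M \in \rep\cP$ admits a finite $\add(\cX)$-resolution by hypothesis, each $U_i$ decomposes as a finite direct sum of indecomposables in $\add(\cX)$ by Krull-Remak-Schmidt, and split sequences are always $\cF_\cX$-exact so direct sums are respected. To prove injectivity, I would construct an inverse $\psi \colon K_0(\cP,\cX) \to G$ by the rule $\psi([M]_\cX) = \sum_i (-1)^i [U_i]^G$ for any finite $\add(\cX)$-resolution $U_\bullet$ of $M$. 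This requires (a) independence of the resolution chosen, and (b) vanishing on the generating relations of $H_\cX$ from Definition~\ref{def:grothendieck}.

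Item (a) follows from the uniqueness of the minimal $\add(\cX)$-resolution (Remark~\ref{rem:unique}) together with a standard argument showing that any $\add(\cX)$-resolution splits off contractible summands of the form $0 \to U \xrightarrow{\mathrm{id}} U \to 0$ from the minimal one, which contribute nothing to the alternating sum. Item (b) is the substance of the proof and requires a \emph{horseshoe lemma} in the relative setting: given a short $\cF_\cX$-exact sequence $M \mono{} N \epi{} L$ and $\add(\cX)$-resolutions of $M$ and $L$, I would construct an $\add(\cX)$-resolution $W_\bullet \to N$ with $W_i = U_i \oplus V_i$ in every degree, after which termwise additivity of $[-]^G$ yields $\psi([N]_\cX) = \psi([M]_\cX) + \psi([L]_\cX)$. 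At each inductive stage, one lifts the cover $V_i \epi{} L_i$ along the $\cF_\cX$-admissible epimorphism covering $N_i$; this is possible because $V_i \in \add(\cX) \subseteq \proj(\cF_\cX)$ by Lemma~\ref{lem:F_proj_included} combined with the standing hypothesis $\proj(\rep\cP) \subseteq \add(\cX)$. Combining with the map from $U_i$ produces a map $U_i \oplus V_i \to N_i$ whose $\cF_\cX$-admissibility and whose kernel-exactness are verified by applying $\Hom_\cP(X,-)$ for each $X \in \cX$ to the resulting $3\times 3$ diagram and invoking the five lemma. Once the horseshoe lemma is in hand, $\psi \circ \phi$ is the identity on $G$ because the minimal resolution of any $U \in \ind(\add \cX)$ is simply $U$ in degree zero.

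The main obstacle is the horseshoe lemma, and more specifically the verification that each intermediate diagram of kernels remains $\cF_\cX$-exact rather than merely short exact; this hinges on having enough $\cF_\cX$-projectives to perform the successive lifts, which is exactly what Lemma~\ref{lem:F_proj_included} delivers under our standing hypothesis. Everything else amounts to careful bookkeeping with the $K_0$ relations already set up in (2).
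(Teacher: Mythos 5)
The paper itself does not supply a proof here: it simply asserts that the argument of \cite[Proposition~4.9]{BBH} ``readily generalizes.''  Your reconstruction is a sound version of that argument and matches the standard relative-homological-algebra strategy.  Part (2) by peeling the resolution into its constituent short $\cF_\cX$-exact sequences is exactly right.  For part (1), the surjectivity of $\phi$ via (2) plus Krull--Remak--Schmidt is fine, and the construction of the would-be inverse $\psi$ via Euler characteristics of finite $\add(\cX)$-resolutions is the correct route.  Two points worth sharpening if you write this out in full: (i) for independence of the resolution, a cleaner route than ``splitting off contractible summands of the minimal one'' is a relative Schanuel's lemma --- form the pullback of $U_0 \epi{} M \xtwoheadleftarrow{} V_0$; since $U_0, V_0 \in \add(\cX) \subseteq \proj(\cF_\cX)$ (Lemma~\ref{lem:F_proj_included} plus the standing hypothesis $\proj(\rep\cP)\subseteq\add(\cX)$) and the pulled-back epimorphisms are again $\cF_\cX$-admissible, both split, giving $U_0\oplus K_1^V \cong V_0 \oplus K_1^U$ and hence the telescoping identity by induction; (ii) in the horseshoe step, the genuinely new content is that the induced short exact sequence of kernels $K_1(M) \mono{} K_1(N) \epi{} K_1(L)$ is again $\cF_\cX$-exact, which you correctly reduce to the $3\times 3$ diagram obtained by applying $\Hom_\cP(X,-)$ for each $X\in\cX$ --- just note that the middle column is split (hence $\Hom$-exact) and the right column is $\Hom$-exact by the hypothesis that $M \mono{} N \epi{} L$ is $\cF_\cX$-exact, so the nine lemma in $\mathrm{Ab}$ delivers the left column.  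With those clarifications the argument is complete.
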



\subsection{Homological invariants}\label{sec:homological}

The following extends \cite[Definition~4.12(2)]{BBH} to include infinite classes of objects.

\begin{definition}
    Suppose that $\add(\cX)$ is precovering and that the $\add(\cX)$-dimension of $\rep \cP$ is not realisably infinite.
    Then we say any invariant equivalent to the association $M \mapsto [M]_\cX$ is a \emph{homological invariant} (relative to $\cX$).
\end{definition}

\begin{remark}\label{rem:Betti}
    The association $M \mapsto [M]_\cX$ can be seen as a way of ``collapsing'' a minimal $\add(\cX)$-resolution of $M$ into an invariant. There are multiple other ways that one could do this, most notably by considering the \emph{relative Betti numbers}. This approach records the direct sum decomposition of each term of a \emph{minimal} $\add(\cX)$-resolution and keeps them separate rather than forming an alternating sum (as is done in Proposition~\ref{prop:exact_gen}(2)). We refer to \cite{CGRST} for additional discussion of this approach.
\end{remark}

\begin{example}\
    \begin{enumerate}
        \item Since $\rep(\cP)$ has finite global dimension, the dimension vector is a homological invariant relative to $\cX = \ind(\proj(\rep \cP))$.
        \item It is shown in \cite[Section~4]{BOO} that the rank invariant is a homological invariant relative to $\cX$ the union of $\ind(\proj(\rep \cP))$ with the set of hook modules. Moreover, it is shown that the exact structure $\cF_\cX$ consists precisely of those exact sequences over which the association $M \mapsto \underline{\mathrm{rk}}(M)$ is additive.
        \item If $\cP$ is finite and either (i) $\cX$ contains only single-source spread modules, or (ii) $\cX$ contains only upset modules, then the association $M \mapsto [M]_\cX$ is a homological invariant relative to $\cX$. See also \cite[Theorem~6.12]{miller}, which considers $\cX$ the set of all upset modules over arbitrary posets and a larger category than $\rep(\cP)$.
        \item Let $\cP = \{1,\ldots,n\} \times \{1,\ldots,m\}$ be a product of two finite totally ordered sets and let $\cX$ be the set of spread modules. Then the association $M \mapsto [M]_\cX$ is a homological invariant relative to $\cX$ by \cite[Proposition~4.5]{AENY}.
    \end{enumerate}
\end{example}

We conclude with a brief discussion of signed decompositions (recall Definition~\ref{def:signed_decomposition}) of homological invariants. The following is an immediate consequence of Proposition~\ref{prop:grothendieck}(1).

\begin{corollary}\label{cor:signed_decomposition}
    Suppose $p: K_0^{\mathrm{split}}(\rep(\cP)) \rightarrow G$ is a homologcial invariant relative to $\cX$. Then $p$ is complete on $\add(\cX)$. Moreover, for $M \in \rep(\cP)$, write $[M]_\cX = \sum_{X \in \cX} m_X\cdot [X]_\cX$ and denote $$M_\cX^+ = \bigoplus_{X \in \cX} X^{\max\{0,m_X\}}, \qquad\qquad M_\cX^- = \bigoplus_{X \in \cX} X^{\max\{0,-m_X\}}.$$
    Then $\left(M_\cX^+,M_\cX^-\right)$ is the unique minimal signed $(\add \cX)$-decomposition of $p$.
\end{corollary}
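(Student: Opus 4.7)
The plan is to reduce every part of the statement to Proposition~\ref{prop:grothendieck}(1), which gives a free $\mathbb{Z}$-basis $\{[X]_\cX : X \in \ind(\add \cX)\}$ for $K_0(\cP,\cX)$. A preliminary observation I will use throughout is that since $p$ is equivalent to the Grothendieck invariant $[\cdot]_\cX$, the two invariants have the same kernel and therefore factor through each other by a unique group isomorphism between their images; consequently any additive equality holding for $[\cdot]_\cX$ transports verbatim to $p$ (and vice versa).

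For the completeness claim on $\add(\cX)$, I would take $U,V \in \add(\cX)$ with $p(U)=p(V)$, use the equivalence to get $[U]_\cX = [V]_\cX$, expand $U$ and $V$ in their (finite) indecomposable decompositions, and apply the freeness of $K_0(\cP,\cX)$ to match the multiplicities indecomposable-by-indecomposable; Krull--Remak--Schmidt in $\rep \cP$ then yields $U \cong V$.

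For the signed-decomposition claim itself, Proposition~\ref{prop:grothendieck}(2) applied to a finite $\add(\cX)$-resolution of $M$ ensures that $[M]_\cX$ is a \emph{finite} $\mathbb{Z}$-linear combination $\sum_X m_X [X]_\cX$, so only finitely many $m_X$ are nonzero and $M_\cX^{\pm}$ are genuine objects of $\add(\cX)$. Splitting the coefficients $m_X$ by sign and using linearity of $[\cdot]_\cX$ gives
\[
[M_\cX^+]_\cX - [M_\cX^-]_\cX \;=\; \sum_X \max\{0,m_X\}[X]_\cX - \sum_X \max\{0,-m_X\}[X]_\cX \;=\; \sum_X m_X[X]_\cX \;=\; [M]_\cX,
\]
and transporting across the equivalence yields $p(M) = p(M_\cX^+) - p(M_\cX^-)$. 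Minimality $\add(M_\cX^+) \cap \add(M_\cX^-) = 0$ is immediate because $\max\{0,m_X\}$ and $\max\{0,-m_X\}$ cannot both be positive for the same $X$.

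For uniqueness, I would take any minimal signed $\add(\cX)$-decomposition $(N^+,N^-)$ of $p$, write $N^{\pm} = \bigoplus_X X^{n_X^{\pm}}$, and note that minimality forces $n_X^+\, n_X^- = 0$ for every $X$. From $p(N^+) - p(N^-) = p(M)$ the equivalence gives $[N^+]_\cX - [N^-]_\cX = [M]_\cX$, and freeness then yields $n_X^+ - n_X^- = m_X$; combined with $n_X^+\, n_X^- = 0$ this pins down $n_X^{\pm} = \max\{0,\pm m_X\}$, so $N^{\pm} \cong M_\cX^{\pm}$. I do not anticipate a serious obstacle; the only point requiring a deliberate sentence is the transport of additive equalities across equivalence, which is a standard triviality about two group homomorphisms with a common kernel.
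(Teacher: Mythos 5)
Your proposal is correct and matches the intended route: the paper presents this corollary as an immediate consequence of Proposition~\ref{prop:grothendieck}(1), and your argument is a careful unpacking of precisely that deduction --- expand $[M]_\cX$ in the free $\mathbb{Z}$-basis $\{[X]_\cX : X \in \ind(\add\cX)\}$, transport additive relations across the equivalence via the shared kernel, and match coefficients using Krull--Remak--Schmidt for completeness and the sign-splitting/disjoint-support argument for existence, minimality, and uniqueness. One small remark: the finiteness of the support $\{X : m_X \neq 0\}$ is already automatic from $[M]_\cX$ being an element of a free abelian group with the stated basis, so the appeal to Proposition~\ref{prop:grothendieck}(2) and a finite resolution, while harmless, is not needed for that point.
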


\begin{remark}
    While the above corollary establishes the uniqueness of and gives a recipe for computing minimal signed decompositions of homological invariants, it is still useful to consider other signed decompositions. Indeed, in \cite[Section~8.3]{OS} and \cite[Sections~6 and~7]{BOOS}, it is necessary to use signed decompositions which are not minimal, but can still be computed from minimal $\add(\cX)$-resolutions (and thus deduced from the relative Betti numbers).
\end{remark}


\section{Global dimensions and the representation dimension}\label{sec:rep_dim}

We consider in this section an additively finite class 
$\cX$, that is $\cX = \add \; G$ for some $\cP$-module $G$. Keeping the condition $\proj(\rep \cP) \subseteq \add(\cX)$, this means precisely that we fix a {\em generator $G$ of $\rep \cP$}, with $\cP$ necessarily finite here since $G$ contains all indecomposable projectives.
If in addition $G$ contains all indecomposable injective modules, then $G$ is called a {\em generator-cogenerator}.
These are the building blocks for Auslander's notion of representation-dimension, defined in \cite{Aus} for any artin algebra, which we study here for posets:

\begin{definition}\label{def:repdim}
The \emph{representation dimension} of $\cP$ is 
\[ \repdim \field_\cP =  \inf \{\gldim(\End_{\cP}(G)) \; | \; G \mbox{ is a generator-cogenerator of } \rep \cP \}.\]
\end{definition}

Before giving an example, we recall some historical context. Auslander introduced the notion of representation dimension as a means of deciding the representation type of an artin algebra $\Lambda$, showing that $\Lambda$ is representation-finite precisely when $\repdim \Lambda \le 2$. 
Here $\Lambda$ is said to be \emph{representation-finite} when the number of indecomposable $\Lambda$-modules is finite, and in this case the direct sum of {\em all} indecomposables is a generator-cogenerator $G$ which satisfies $\dim_\cX(\Lambda) =0$ for $\cX = \add \; G = \mods \Lambda$. To show that $\repdim \Lambda \le 2$ in this case, 
 Auslander used the following argument : 
For any right module $T$ over $\End_{\Lambda}(G)$
consider a projective presentation
\[
\Hom_\Lambda(G,G'') \to \Hom_\Lambda(G,G') \to T \to 0
\]
where $G', G'' \in \add \; G$ (note that the projective $\End_{\Lambda}(G)$-modules are of the form $\Hom_\Lambda(G,H)$ for some $H \in \add \; G$).
The map from $\Hom_\Lambda(G,G'')$ to $\Hom_\Lambda(G,G')$ is induced by a map $f:G'' \to G'$, and the exact sequence
\[
0 \to \Ker f \to G'' \to G'
\] yields the desired projective resolution 
\[
0 \to \Hom_\Lambda(G, \Ker f) \to \Hom_\Lambda(G,G'') \to \Hom_\Lambda(G,G') \to T \to 0,
\]
showing that the global dimension of $\End_\Lambda(G)$ is at most two. 
The work \cite{EHIS} extended Auslander's argument by adding tails of $\cX$-resolutions to $\Ker f$ of the form 
\[
0 \to X_n \to \ldots X_0 \to \Ker f \to 0
\]
to get the result for $n \ge 0$ which Auslander showed for $n=0$.

\begin{example}
    Consider the local algebra $\Lambda = K[x]/(x^2)$. This is a self-injective algebra with two classes of indecomposable modules. Thus the free module $\Lambda$ is a generator-cogenerator with $\gldim(\End_\Lambda(\Lambda)) = \infty$. On the other hand, Auslander's argument summarized above shows that $\repdim\Lambda = 2$. This demonstrates that adding extra direct summands to a given generator-cogenerator $G$ can have the effect of lowering the global dimension of the endomorphism ring.
\end{example}

Apart from the representation-finite case, the representation-dimension is in general hard to determine. In fact, the first examples of representation-infinite algebras whose representation-dimension was determined all satisfy $\repdim \Lambda = 3$, for instance when $\Lambda$ is a hereditary or tilted algebra. We refer to \cite{Xi2000} for more examples of algebras whose representation dimension could be determined.
The first examples of arbitrarily large representation dimension were discovered in \cite{rouquier}.
\medskip 

We aim to illustrate how the notion of representation-dimension compares to the $\add(G)$-global dimension of $\field_\cP$ discussed above.
Lemma 2.1 in \cite{EHIS} can be summarised as follows:
\begin{proposition}\label{prop:generator_cogenerator}
    Assume $G$ is a generator-cogenerator of $\rep \cP$ and $\cX = \add \; G$. 
Then, for all $n \ge 1$,
    $\dim_\cX(\field_\cP) = n \mbox{ if and only if } \gldim(\End_{\cP}(G)) = n +2.
    $
\end{proposition}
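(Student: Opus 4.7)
The plan is to translate the $\add G$-dimension on $\rep\cP$ into projective dimension over $E = \End_\cP(G)$ via the functor $F = \Hom_\cP(G,-)\colon \rep\cP \to \mods E$. Since $\cP$ is finite and $\proj(\rep\cP) \subseteq \add G$, Corollary~\ref{cor:AS_1} gives $\add G = \proj(\cF_\cX)$, so $F$ sends $\cF_\cX$-exact sequences to short exact sequences in $\mods E$ and restricts to an equivalence $\add G \xrightarrow{\sim} \proj E$. I will also need the companion facts that $F$ reflects the zero object (an immediate consequence of $G$ being a generator) and that a morphism $G_0 \to M$ with $G_0 \in \add G$ is surjective in $\rep\cP$ whenever $F(G_0) \to F(M)$ is surjective in $\mods E$; the latter follows by expressing each element of $M$ as $\chi(g)$ for some $\chi \in \Hom_\cP(G,M)$ and expanding $\chi$ in the $E$-generators coming from $F(G_0)$.

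With these ingredients the first key step is the identity $\dim_\cX(M) = \mathrm{pd}_E F(M)$ for every $M \in \rep\cP$. The inequality $\mathrm{pd}_E F(M) \le \dim_\cX(M)$ is immediate: apply $F$ to a minimal $\cF_\cX$-exact $\add G$-resolution. For the reverse inequality, I would lift a minimal projective resolution of $F(M)$ stage by stage using the equivalence $\add G \cong \proj E$ to obtain a resolution of $M$ by objects of $\add G$, noting that each lifted map is surjective by the companion lemma above and its kernel $K_i$ satisfies $F(K_i) = \Omega^i_E F(M)$. When $\mathrm{pd}_E F(M) = k$, lifting the projective cover of $F(K_k)$ produces a surjection $G_k \twoheadrightarrow K_k$ whose kernel has zero image under $F$, hence is itself zero, so $K_k \cong G_k \in \add G$ and $\dim_\cX(M) \le k$.

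The upper bound $\gldim E \le n+2$ then recovers Auslander's argument recalled before the statement: for any $T \in \mods E$, lift a projective presentation $F(G_1) \to F(G_0) \to T \to 0$ to $f\colon G_1 \to G_0$ in $\add G$, and note that left exactness of $F$ gives $F(\ker f) = \ker F(f)$. Splicing an $\cF_\cX$-exact $\add G$-resolution of $\ker f$ of length at most $\dim_\cX(\ker f) \le n$ yields, after applying $F$, a projective resolution of $T$ of length at most $n+2$.

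The main obstacle, and the reason for the hypothesis $n \ge 1$, is exhibiting a single $T \in \mods E$ with $\mathrm{pd}_E T = n+2$. Pick $M$ with $\dim_\cX M = n$. Since $G$ is a cogenerator I embed $M \hookrightarrow I_1$ into an injective $I_1 \in \add G$, then embed $I_1/M \hookrightarrow I_2 \in \add G$, and set $f\colon I_1 \twoheadrightarrow I_1/M \hookrightarrow I_2$ so that $\ker f = M$. Taking $T = \coker F(f)$, a comparison of the presentation $F(I_1) \to F(I_2) \to T \to 0$ with the minimal projective presentation of $T$ shows that $F(M) = \ker F(f)$ decomposes as $\Omega^2_{\min}(T) \oplus Q$ for some projective $E$-module $Q$; since $\mathrm{pd}_E F(M) = n \ge 1$ by the first step, this forces $\mathrm{pd}_E \Omega^2_{\min}(T) = n$ and hence $\mathrm{pd}_E T = n+2$. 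Combined with the upper bound this yields $\gldim E = n+2$. For the converse implication, the upper bound applied to $\gldim E = n+2$ gives $\dim_\cX\field_\cP \ge n \ge 1$, so the lower bound $\gldim E \ge \dim_\cX\field_\cP + 2$ also applies, forcing $\dim_\cX\field_\cP \le n$ and hence equality.
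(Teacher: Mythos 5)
Your argument is correct and follows the same route as the cited reference \cite[Lemma~2.1]{EHIS}: projectivisation via $F=\Hom_\cP(G,-)$, Auslander's syzygy argument for the upper bound $\gldim E\le \dim_\cX(\field_\cP)+2$, and use of the cogenerator hypothesis (only there) to manufacture a module $T$ of projective dimension $n+2$, exactly as the paper's surrounding discussion indicates. The key intermediate identity $\dim_\cX(M)=\mathrm{pd}_E F(M)$ is valid and is exactly the projectivisation dictionary; your lifting argument works because $F$ is fully faithful on $\add G$ and reflects zero, and because $F$-exactness of a sequence in $\rep\cP$ is the same as $\cF_\cX$-exactness for $\cX=\add G$.

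Two small points worth tightening. First, there is an indexing slip in the lifting step: with $K_i=\ker(G_i\to K_{i-1})$ (and $K_{-1}=M$) one has $F(K_i)=\Omega^{i+1}_E F(M)$, not $\Omega^i_E F(M)$; the final conclusion $\dim_\cX(M)\le k$ when $\mathrm{pd}_E F(M)=k$ is still correct because $F(K_{k-1})=\Omega^k_E F(M)$ is projective and so $K_{k-1}\in\add G$ by your ``full-and-faithful plus reflects zero'' remark. Second, you ``pick $M$ with $\dim_\cX M=n$,'' which implicitly assumes the supremum defining $\dim_\cX(\field_\cP)$ is attained. This is true here but deserves a word: $\gldim E$ is attained on a simple $E$-module $S$, and lifting a minimal projective presentation of $S$ to $f\colon G''\to G'$ yields $\ker f$ with $\dim_\cX(\ker f)=\mathrm{pd}_E\Omega^2_E S=\gldim E-2$, which combined with the Auslander inequality forces the supremum to be attained. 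Alternatively, one can phrase the cogenerator step as showing $\mathrm{pd}_E F(M)\le \gldim E-2$ for every $M$ with $\mathrm{pd}_E F(M)\ge 1$, sidestepping any attainment issue. Finally, the phrase ``$F(M)$ decomposes as $\Omega^2_{\min}(T)\oplus Q$'' should really be stated as a stable isomorphism from Schanuel's lemma plus Krull--Schmidt (using that $\Omega^2_{\min}(T)$ has no projective summands), but the conclusion $\mathrm{pd}_E\Omega^2_{\min}(T)=n$ that you extract from it is correct.
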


\begin{remark}\label{rem:generator}
    Only one direction of the proof of Proposition~\ref{prop:generator_cogenerator} given in \cite{EHIS} uses the fact that $G$ is a cogenerator. Thus for $G$ a generator of $\rep \cP$ and $\cX = \add \ G$, we have that $\dim_\cX(\field_\cP) \leq \gldim (\End_{\cP}(G)) \leq \dim_\cX(\field_\cP) + 2$. In particular, $\dim_\cX(\field_\cP)$ is infinite if and only if $\gldim(\End_{\cP}(G))$ is infinite. 
\end{remark}

Returning to the case where $G$ is a generator-cogenerator, the only case preventing us from saying that the endomorphism ring of $G$ has global dimension equal $\dim_\cX(\field_\cP) + 2$ is when the ring $\End_{\cP} G$ is semi-simple (which results in global dimension zero).
We assume from now that this is not the case.

When computing homological invariants for a persistence module, one would like to establish a lower bound on the length of a minimal resolution, thus bounding the complexity of the computation. The representation dimension provides such a bound.

This motivates the study of representation dimension for posets. However, it is very difficult in general to determine the representation dimension. A general result by Iyama states that the representation dimension of an artin algebra is always finite \cite{iyama}. This result has been shown earlier in \cite{Xi2002} for the case of posets, which is what we consider here.
However, finite is not good enough a bound for concrete applications, 
so we illustrate the case of multiparameter persistence in more detail:
An $n$-dimensional finite grid is a finite poset of the form 
 $\cP = \cT_1\times \cdots \times \cT_n$ where each $\cT_i$ is a totally ordered set, equipped the usual product order.
Then the incidence algebra $\field_\cP$ can be realized as a tensor product (over $\field$):
$
\field_\cP = \field_{\cT_1 }\otimes \cdots \otimes \field_{\cT_n}
$,
 where each $\field_{\cT_i}$ is the path algebra of a linear oriented quiver of type $A$.
We apply then the following result from \cite[Theorem 3.5]{Xi2000}:
\begin{theorem}\label{thm:repdim-tensor}
    Suppose $\field$ is a perfect field.  Then
$
\repdim\field_\cP \le \sum_{i = 1}^n \repdim \field_{\cT_i }
$.
\end{theorem}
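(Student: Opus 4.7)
The plan is to build an optimal generator-cogenerator for $\field_\cP$ by tensoring together optimal generator-cogenerators for each factor, and then to control the global dimension of the resulting endomorphism ring via the classical compatibility of $\gldim$ with tensor products of algebras over a perfect field.

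For each $i$, use the fact that $\field_{\cT_i}$ is a finite-dimensional algebra (in fact, representation-finite, since $\cT_i$ is of Dynkin type $A$) and choose a generator-cogenerator $G_i \in \rep \cT_i$ attaining the infimum in Definition~\ref{def:repdim}, so that $\gldim(\End_{\cT_i}(G_i)) = \repdim \field_{\cT_i}$. Now set $G = G_1 \otimes_\field \cdots \otimes_\field G_n$, viewed as a module over $\field_\cP = \field_{\cT_1} \otimes \cdots \otimes \field_{\cT_n}$. First I would check that $G$ is a generator-cogenerator of $\rep \cP$: the indecomposable projective $\field_\cP$-modules are exactly the tensor products of indecomposable projective $\field_{\cT_i}$-modules (and similarly for injectives), and each such tensor product appears as a direct summand of $G$ because each $G_i$ contains all indecomposable projectives and injectives of $\field_{\cT_i}$ as summands.

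Next I would identify the endomorphism ring. Since each $G_i$ is finite-dimensional over $\field$, the standard ``external product'' isomorphism gives
\[
\End_{\field_\cP}(G) \;\cong\; \End_{\cT_1}(G_1) \otimes_\field \cdots \otimes_\field \End_{\cT_n}(G_n).
\]
At this point the crux of the proof is the inequality
\[
\gldim\bigl(\End_{\cT_1}(G_1) \otimes_\field \cdots \otimes_\field \End_{\cT_n}(G_n)\bigr) \;\le\; \sum_{i=1}^n \gldim(\End_{\cT_i}(G_i)),
\]
which is where the hypothesis that $\field$ be perfect enters. This is the classical fact that for two finite-dimensional algebras $A,B$ over a perfect field one has $\gldim(A\otimes_\field B) \le \gldim A + \gldim B$; it follows because over a perfect field the enveloping algebra $(A\otimes B)\otimes (A\otimes B)^{\mathrm{op}}$ decomposes compatibly, so that a projective resolution of $A\otimes B$ as a bimodule may be built as the tensor product of bimodule resolutions of $A$ and $B$. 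Iterating yields the displayed inequality for $n$ factors, and this step is the main obstacle (it is not true over arbitrary fields, and genuinely uses perfectness).

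Combining everything, we obtain
\[
\repdim \field_\cP \;\le\; \gldim(\End_{\field_\cP}(G)) \;\le\; \sum_{i=1}^n \gldim(\End_{\cT_i}(G_i)) \;=\; \sum_{i=1}^n \repdim \field_{\cT_i},
\]
where the first inequality is the definition of $\repdim$ applied to the generator-cogenerator $G$. I would allow the standing assumption (made just before the theorem) that the endomorphism rings in question are not semisimple to handle the degenerate case, since if some $\End_{\cT_i}(G_i)$ is semisimple the corresponding summand contributes zero and can be absorbed into the bound. The only nontrivial ingredient beyond bookkeeping is the tensor-product global-dimension inequality, and this is exactly where the perfect-field hypothesis is used.
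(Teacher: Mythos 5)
Your proposal is correct and takes essentially the same route as the cited reference \cite{Xi2000}: build a generator-cogenerator for $\field_\cP$ by tensoring optimal generator-cogenerators $G_i$ of the factors, identify $\End_{\field_\cP}(G_1 \otimes \cdots \otimes G_n) \cong \End_{\cT_1}(G_1) \otimes_\field \cdots \otimes_\field \End_{\cT_n}(G_n)$, and invoke the subadditivity of global dimension under tensor product over a perfect field. The paper does not reproduce a proof (it simply cites Xi's Theorem~3.5), but its remark immediately following the statement confirms that Xi's generator-cogenerator is exactly of the tensor form you describe, so you have correctly reconstructed the argument.

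One small point worth tightening: the crux inequality $\gldim(A\otimes_\field B)\le \gldim A+\gldim B$ is most cleanly justified by passing through Hochschild (bimodule) dimension $\mathrm{Hdim}$. One always has $\mathrm{Hdim}(A\otimes_\field B)\le \mathrm{Hdim}(A)+\mathrm{Hdim}(B)$ by tensoring bimodule resolutions, and $\gldim(A)\le\mathrm{Hdim}(A)$ with equality precisely when $A/\mathrm{rad}(A)$ is a separable $\field$-algebra, which is automatic over a perfect field; one also needs that over a perfect field $\mathrm{rad}(A\otimes B)=\mathrm{rad}(A)\otimes B+A\otimes\mathrm{rad}(B)$ so that $(A\otimes B)/\mathrm{rad}(A\otimes B)$ is again separable. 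Your sketch gestures at exactly this, but making the separable-semisimple-quotient condition explicit is where ``perfect'' genuinely enters, rather than the bimodule-resolution step itself (which works over any field). Finally, the aside about the standing non-semisimplicity assumption is unnecessary here: if some $\End_{\cT_i}(G_i)$ were semisimple then $\field_{\cT_i}$ would itself be semisimple, i.e.\ $\cT_i$ a single point, and that factor contributes $0$ to both sides of the inequality; no special handling is needed.
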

Since each each $\field_{\cT_i}$ is representation-finite, this yields $\repdim\field_\cP \le 2n$, in particular for a 2-dimensional grid we have $\repdim\field_\cP \le 4.$
On the other hand, except for some very small cases, the algebra $\field_\cP$ is representation-infinite, so we know $3 \le \repdim\field_\cP \le 4$ for the general $2d$-grid. 

\begin{remark}\label{rem:grid}
    It would be very interesting to determine the representation dimension of the $n$-dimensional grid.
\end{remark}

The proof of Theorem \ref{thm:repdim-tensor} uses as generator-cogenerator a module $G$ which consists of sums of tensors $\mathbb{I}_1 \otimes \cdots \otimes \mathbb{I}_n$, thus for the case of an $n$-dimensional finite grid $\cP$, the set $\cX = \add G$ consists of all direct sums of segment modules. The argument from \cite{Xi2000} then shows that $\dim_\cX(\field_\cP) = 2n-2$. We discuss how this extends to infinite posets in Example~\ref{ex:infinite_exact}(3). We also note that, for $\cP$ finite, the resolution in \cite[Section~H]{CGRST} has length at most $2n$, and thus has 0 homology in degrees $2n$ and $2n-1$.

The question is whether the estimate $\repdim\Lambda \leq 2n$ can be lowered using a different set of modules. In light of the following result \cite[Corollary 9.4]{ringel2012}, it seems unlikely that the bound of $2n$ is tight:
\begin{theorem}\label{thm:repdim-ringel}
Let $\Lambda_1, \ldots, \Lambda_n$ be path algebras of representation-infinite bipartite quivers. Then the algebra $\Lambda_1 \otimes \cdots \otimes \Lambda_n$ has representation dimension precisely $n + 2.$
\end{theorem}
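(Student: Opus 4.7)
The plan is to establish the inequalities $\repdim \Lambda \leq n+2$ and $\repdim \Lambda \geq n+2$ separately, where $\Lambda := \Lambda_1 \otimes \cdots \otimes \Lambda_n$. For the upper bound I would exhibit an explicit generator-cogenerator. Each $\Lambda_i$ is hereditary and representation-infinite, so Auslander's classical argument yields $\repdim \Lambda_i = 3$, realised by $G_i = \Lambda_i \oplus D(\Lambda_i)$ (with $D$ the $\field$-dual). Take $G = G_1 \otimes_\field \cdots \otimes_\field G_n$ as candidate generator-cogenerator of $\Lambda$. Since each quiver is bipartite, $G_i$ splits cleanly into a non-simple projective part (at source vertices), all simples, and a non-simple injective part (at sink vertices); this matches the natural $\{0,1\}^n$-grading inherited by $\Lambda$-modules. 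Given a simple right $\End_\Lambda(G)$-module $T$, start from the projective presentation $\Hom_\Lambda(G,G'') \to \Hom_\Lambda(G,G') \to T \to 0$ used in Auslander's argument, and iteratively lift the resulting syzygies using short exact sequences whose kernels admit $\add(G)$-resolutions of length at most one in each tensor factor (by the hereditary hypothesis on each $\Lambda_i$). Combining these one-step resolutions through the Kunneth-style behaviour of $\mathrm{Ext}$ over tensor products of $\field$-algebras should give a resolution of $T$ of total length at most $n+2$.

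For the lower bound I would invoke Rouquier's inequality $\repdim \Lambda \geq \dim D^b(\mods \Lambda) + 2$ from \cite{rouquier}. Each $\Lambda_i$ is hereditary and representation-infinite, which forces $\dim D^b(\mods \Lambda_i) \geq 1$. Combining this with Rouquier's super-additivity of derived-category dimensions under $\otimes_\field$ produces $\dim D^b(\mods \Lambda) \geq n$, and hence $\repdim \Lambda \geq n+2$.

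The main obstacle lies in the upper bound, and is twofold. First, one must show that the tensor product of generator-cogenerators is itself a generator-cogenerator whose additive closure contains the expected indecomposable summands; this uses that $\field$ is perfect so that tensor products of indecomposables decompose predictably, exactly as in the proof of Theorem~\ref{thm:repdim-tensor}. Second, one must sharpen the naive bound $3n$ coming from Theorem~\ref{thm:repdim-tensor} all the way down to $n+2$. Since each hereditary factor contributes a resolution of length only $1$ and each additional tensor factor should add at most one further step when the bipartite grading is respected, the bipartiteness of the quivers must be used in an essential way to avoid the triple-counting present in the proof of Theorem~\ref{thm:repdim-tensor}; this is the heart of Ringel's argument and is where I expect most of the technical work to concentrate.
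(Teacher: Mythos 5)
The paper does not prove this statement; it simply cites \cite[Corollary~9.4]{ringel2012} and offers no argument of its own. So the comparison can only be against the strategy of Ringel's actual proof, not against anything in the present text.

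Your roadmap is reasonable in broad outline (bound above by exhibiting a generator-cogenerator, bound below via a triangulated-category argument), but both halves contain gaps that are not cosmetic. For the lower bound, the inequality you cite as ``Rouquier's inequality $\repdim\Lambda \geq \dim D^b(\mods\Lambda) + 2$'' is not what Rouquier proved. His lower bound of that shape is phrased in terms of the \emph{stable} (equivalently, singularity) category, and since $\Lambda = \Lambda_1\otimes\cdots\otimes\Lambda_n$ has finite global dimension $n$ (each factor being hereditary), that category is trivial here and gives no information. The lower bound $\repdim\Lambda\geq n+2$ that Ringel uses is Oppermann's, coming from his lattice criterion for lower bounds on representation dimension, not from a super-additivity of $D^b$-dimensions under $\otimes_\field$ (which, separately, is not a known general fact and would itself require proof). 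For the upper bound, you take $G = G_1\otimes\cdots\otimes G_n$ with $G_i = \Lambda_i\oplus D\Lambda_i$, which is the right kind of candidate, but the step where ``combining one-step resolutions through a K\"unneth-style behaviour'' produces length $n+2$ rather than the $2n$ coming from Theorem~\ref{thm:repdim-tensor} is exactly the content of the theorem and is not supplied. You note that bipartiteness ``must be used in an essential way,'' but the mechanism is left unnamed: the relevant structural fact is that a bipartite quiver path algebra has $\mathrm{rad}^2=0$ in addition to being hereditary, and it is this radical-square-zero property of each factor, and the induced filtration of $\Lambda$-modules by the $\{0,1\}^n$-grading, that lets one control the syzygies of $\Hom_\Lambda(G,-)$ and collapse the naive $2n$ to $n$. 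Without an argument for that collapse, the upper bound is only asserted. In short, the proposal identifies the right shape of proof but neither half is carried out, and the lower-bound ingredient cited would not actually apply to an algebra of finite global dimension.
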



\section{Characterizations of irreducible morphisms}\label{sec:irreducible}

In this section, we describe the $\cX$-irreducible morphisms (see Definition~\ref{def:irreducible}) for some choices of $\cX$. The statement of each characterization is accompanied by an illustrated example over a finite rectangular subposet of $\mathbb{N}^2$. More precisely, each illustration describes all possible $\cX$-irreducible morphisms $\mathbb{I}_\cS \rightarrow \mathbb{I}_\cT$ for some fixed choice of $\cS$. The region $\cS \cap \cT$ is shaded gray, the region $\cS \setminus \cT$ shaded orange, and the region $\cT \setminus \cS$ shaded blue. The first example can be found after the statement of Proposition~\ref{prop:upset}.

We assume throughout this section that $\cP$ is finite and that $\cX \subseteq \ind(\rep \cP)$ is finite and satisfies $\proj(\rep \cP) \subseteq \add(\cX)$. Thus $G = \bigoplus_{X \in \cX} X$ is a generator (but not necessarily a cogenerator) of $\rep \cP$ as in Section~\ref{sec:rep_dim}.

As mentioned in the introduction, knowledge of the $\cX$-irreducible morphisms between indecomposables is enough to compute the Gabriel quiver of the $\field$-algebra $\End_{\cP}(G)$. See e.g. \cite[Section~4.3]{BBH}. This can be a useful tool for bounding the global dimension of $\End_{\cP}(G)$, and thus by Remark~\ref{rem:generator} also the $\add(\cX)$-global dimension of $\field_\cP$. Example~\ref{ex:spreads_infinite} at the end of this section gives a detailed example of this process.

We begin with the definition.

\begin{definition}\label{def:irreducible}
        A morphism $f:X\to X'$, with $X,X'\in \cX$, is called $\cX$-\textit{irreducible}, or\textit{ irreducible relative to} $\cX$,  if $f$ is not a section or a retraction and all factorisations of $f$ of the shape $X\xto{g} U \xto{h} X'$, with $U\in \text{add} \cX$, must have $g$ a section or $h$ a retraction. 
\end{definition}

The results and proofs of this section make multiple references to the characterization of hom-spaces in Proposition~\ref{prop:hom_spread} and the notion of covers from Definition~\ref{def:covers}. The following lemma will also be critical. See also \cite[Proposition~5.10]{BBH}.

\begin{lemma}\label{lem:irreducible} Let $X \neq Y \in \cX$.
    \begin{enumerate}
        \item Suppose that $\End_\cP(X) \cong \field \cong \End_\cP(Y)$ and  $\dim_\field\Hom_\cP(X,Y) = 1$. Then $\Hom_\cP(X,Y)$ contains a (necessarily unique up to scalar multiplication) irreducible morphism if and only if there do not exist $Z \in \cX \setminus \{X,Y\}$ and morphisms $X \xrightarrow{g} Z \xrightarrow{h} Y$ with $h \circ g \neq 0$.
        \item Suppose there exists an $\cX$-irreducible morphism $f: X \rightarrow Y$. If $\im f \in \add(\cX)$, then $f$ must be either injective or surjective.
    \end{enumerate}
\end{lemma}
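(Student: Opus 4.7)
For part~(1), I would proceed by proving each direction separately and exploit the one-dimensionality of $\Hom_\cP(X,Y)$ throughout.

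\emph{Forward direction.} Suppose $f \in \Hom_\cP(X,Y)$ is $\cX$-irreducible and that there exist $Z \in \cX \setminus \{X,Y\}$ and morphisms $X \xrightarrow{g} Z \xrightarrow{h} Y$ with $h\circ g \neq 0$. Since $\dim_\field \Hom_\cP(X,Y) = 1$, the nonzero composition $h\circ g$ is a scalar multiple of $f$, so after rescaling $h$ we may write $f = h\circ g$. By $\cX$-irreducibility, either $g$ is a section or $h$ is a retraction. Since $Z$ is indecomposable, the first alternative forces $Z \cong X$ and the second forces $Z \cong Y$, either case contradicting $Z \in \cX \setminus \{X,Y\}$.

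\emph{Backward direction.} Let $f$ be any nonzero element of $\Hom_\cP(X,Y)$. First, $f$ is neither a section nor a retraction: either property would, by indecomposability of $X$ and $Y$, force $X \cong Y$, contradicting $X \neq Y$. Now suppose $f = h \circ g$ with $X \xrightarrow{g} U \xrightarrow{h} Y$ and $U = \bigoplus_i U_i \in \add(\cX)$ with each $U_i \in \cX$. Writing $g = (g_i)$ and $h = (h_i)$, we have $f = \sum_i h_i \circ g_i$, so some summand satisfies $h_i \circ g_i \neq 0$. By hypothesis, $U_i \in \{X,Y\}$. If $U_i \cong X$, then $g_i \in \End_\cP(X) \cong \field$ is nonzero (else $h_i\circ g_i = 0$), hence an isomorphism; composing $g_i^{-1}$ with the projection $U \twoheadrightarrow U_i$ yields a retraction of $g$, so $g$ is a section. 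Symmetrically, if $U_i \cong Y$, then $h_i$ is an isomorphism and $h$ is a retraction.

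For part~(2), I would factor $f$ through its image as $X \xrightarrow{e} \im f \xrightarrow{m} Y$, where $e$ is the canonical epimorphism and $m$ the canonical monomorphism. Since $\im f \in \add(\cX)$, the $\cX$-irreducibility of $f$ implies that $e$ is a section or $m$ is a retraction. In an abelian (or any additive) category, a morphism which is simultaneously an epimorphism and a section must be an isomorphism; likewise for a monomorphism which is a retraction. Thus either $e$ is an isomorphism, making $f = m \circ e$ injective, or $m$ is an isomorphism, making $f$ surjective.

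The main subtlety is the case analysis in the backward direction of~(1): one must carefully use both $\End_\cP(X) \cong \field$ (to upgrade a nonzero $g_i$ into an iso) and the indecomposability of the $U_i$ (to constrain $U_i$ to lie in $\{X,Y\}$, not merely to have $X$ or $Y$ as a summand). Aside from this, the argument is essentially bookkeeping once Proposition~\ref{prop:hom_spread}-style reasoning is set up.
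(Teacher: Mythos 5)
Your proof is correct and takes essentially the same route as the paper's: both directions of (1) hinge on the one-dimensionality of $\Hom_\cP(X,Y)$ together with the triviality of $\End_\cP(X)$ and $\End_\cP(Y)$, and (2) is the same image-factorization argument. You simply spell out the summand-by-summand bookkeeping in the backward direction of (1) that the paper leaves implicit.
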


\begin{proof}
    (1) Suppose first that there exists $Z \in \cX\setminus \{X,Y\}$ and morphisms $X \xrightarrow{g} Z \xrightarrow{h} Y$ with $h \circ g \neq 0$. Then $h \circ g$ is a basis element of $\Hom_\field(X,Y) \cong \field$. Since $X, Y$, and $Z$ are all indecomposable, it follows that $\Hom_\cP(X,Y)$ contains no $\cX$-irreducible morphism.

    Now suppose no such $Z$ exists, and let $f: X\rightarrow Y$ be nonzero. Suppose $f$ admits a factorisation $X \xrightarrow{g} U \xrightarrow{h} Y$ with $U \in \add(\cX)$. Then for every indecomposable direct summand $Z$ of $U$, either the induced map $X \rightarrow Z \rightarrow Y$ is zero or $Z \in \{X,Y\}$. Since $X$ and $Y$ have trivial endomorphism rings, it follows that $g$ is a section or $h$ is a retraction.
    
    (2) Suppose $f: X \rightarrow Y$ is $\cX$-irreducible. Since $\im f \in \add(\cX)$, either the quotient map $X \twoheadrightarrow \im f$ is a section (in which case $f$ is injective) or the inclusion map $\im f\hookrightarrow Y$ is a retraction (in which case $f$ is surjective).
\end{proof}

\begin{proposition}\label{prop:upset}
Suppose $\cP$ has a unique maximal element, and let $\cX$ be the set of upset modules. Then, for $\mathbb{I}_\cS, \mathbb{I}_\cT \in \cX$, we have $\Hom_\cP(\mathbb{I}_\cS,\mathbb{I}_\cT) \cong \field$ if $\cS\subseteq \cT$ and $\Hom_\cP(\mathbb{I}_\cS,\mathbb{I}_\cT) = 0$ otherwise. In particular, there is an $\cX$-relative irreducible morphism $\mathbb{I}_\cT\to \mathbb{I}_\cS$ if and only if $\cS=\cT\cup \{x\}$ for some $x\cover \cS$.
\end{proposition}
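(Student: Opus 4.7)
The plan is to first establish the $\Hom$-space description using Proposition~\ref{prop:hom_spread}, and then apply Lemma~\ref{lem:irreducible}(1) together with a combinatorial analysis of the upsets lying between $\cT$ and $\cS$. Let $m$ denote the unique maximum of $\cP$. Since every nonempty upset contains $m$, the intersection $\cS \cap \cT$ is nonempty; and since every element of $\cP$ lies below $m$, this intersection is connected, providing the unique candidate component $\mathcal{U} = \cS \cap \cT$ to test in Proposition~\ref{prop:hom_spread}. The first condition of that proposition asks that every $x \in \cS$ with $x \leq \mathcal{U}$ lie in $\mathcal{U}$; the presence of $m$ makes $x \leq \mathcal{U}$ automatic for all $x \in \cS$, so this reduces exactly to $\cS \subseteq \cT$. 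Assuming $\cS \subseteq \cT$, the second condition $\{y \in \cT \mid \mathcal{U} \leq y\} \subseteq \mathcal{U}$ follows from the upset property of $\cS$, since any $y \in \cT$ above some $u \in \cS$ lies in $\cS$. This yields $\dim_\field \Hom_\cP(\mathbb{I}_\cS, \mathbb{I}_\cT) = 1$ when $\cS \subseteq \cT$, and $0$ otherwise.

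Next, I apply Lemma~\ref{lem:irreducible}(1) to $X = \mathbb{I}_\cT$, $Y = \mathbb{I}_\cS$. Every upset module has endomorphism ring $\field$, and the Hom-spaces are at most one-dimensional by the calculation above, so the lemma's hypotheses are satisfied. The existence of an irreducible morphism $\mathbb{I}_\cT \to \mathbb{I}_\cS$ thus amounts to $\cT \subsetneq \cS$ together with the absence of any upset module $\mathbb{I}_{\mathcal{W}} \in \cX \setminus \{\mathbb{I}_\cT, \mathbb{I}_\cS\}$ factoring a nonzero composition. For any upset $\mathcal{W}$ with $\cT \subsetneq \mathcal{W} \subsetneq \cS$, the composition $\mathbb{I}_\cT \to \mathbb{I}_{\mathcal{W}} \to \mathbb{I}_\cS$ acts as the identity on $\field$ at every point of $\cT$ and is therefore nonzero; conversely, any $\mathbb{I}_{\mathcal{W}} \in \cX$ receiving a nonzero map from $\mathbb{I}_\cT$ and mapping nontrivially into $\mathbb{I}_\cS$ must satisfy $\cT \subseteq \mathcal{W} \subseteq \cS$ by the Hom description. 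Hence an irreducible morphism exists if and only if no upset $\mathcal{W}$ with $\cT \subsetneq \mathcal{W} \subsetneq \cS$ exists.

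The final step classifies such intermediate upsets. Write $D = \cS \setminus \cT$. Upsets $\mathcal{W}$ satisfying $\cT \subseteq \mathcal{W} \subseteq \cS$ correspond bijectively to subsets $D' = \mathcal{W} \setminus \cT \subseteq D$ that are upward-closed in $D$ with respect to the induced order from $\cP$. If $|D| \geq 2$, then for any minimal element $x$ of $D$, the subset $D \setminus \{x\}$ is nonempty, proper, and upward-closed in $D$ (upward-closure uses that $y \in D \setminus \{x\}$ with $x \geq y$ would violate the minimality of $x$), yielding a strict intermediate upset. Thus no strict intermediate exists iff $|D| = 1$, i.e., $\cS = \cT \cup \{x\}$ for a single element $x$. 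Translating back, the condition that $\cS = \cT \cup \{x\}$ itself be an upset forces $x \notin \cT$ and every $y > x$ in $\cP$ to lie in $\cT$, which (together with the automatic $x \leq m \in \cT$) are exactly conditions~(1) and~(2) of Definition~\ref{def:covers} for $x \cover \cT$ (the statement's ``$x \cover \cS$'' is a typo for $x \cover \cT$, as $x \in \cS$ rules out the literal reading).

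The main obstacle is the careful combinatorial translation in Step~3, together with remembering that the unique-maximum hypothesis is used twice: once in Step~1 to guarantee that the intersection of two upsets is connected, and once in Step~3 to trivialize the ``$x \leq \cT$'' side of the cover relation. Everything else is a direct application of the cited propositions and lemmas.
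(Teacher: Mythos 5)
Your proposal is correct and follows essentially the same route as the paper: compute $\Hom$-spaces via Proposition~\ref{prop:hom_spread} (using the unique maximum to force $\cS\cap\cT$ to be connected and nonempty), then reduce to Lemma~\ref{lem:irreducible}(1), and finally identify the irreducible maps with cover relations in the inclusion-ordered poset of upsets. The paper's proof compresses the last step into a single sentence (``the $\cX$-irreducible morphisms are the cover relations, namely adding a single element at a time''); you unpack it by parametrizing intermediate upsets $\cT\subseteq\mathcal{W}\subseteq\cS$ by upward-closed subsets of $D=\cS\setminus\cT$ and observing that the minimal-element argument (valid since $\cP$ is finite in this section) yields a proper intermediate whenever $|D|\geq 2$. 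This is a more careful version of the same argument, not a different method. You also correctly flag that the statement's ``$x\cover\cS$'' is inconsistent with ``$\cS=\cT\cup\{x\}$'' under Definition~\ref{def:covers} (which requires $x\notin\cS$), and your reading $x\cover\cT$ is the one that makes the equivalence true; this is indeed a typo in the statement, consistent with the figure where the cover condition is measured against the smaller upset.
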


\begin{center}
    \begin{tikzpicture}[scale = 0.75]
        \filldraw (0,0) circle (0.1);
        \filldraw (1,0) circle (0.1);
        \filldraw (2,0) circle (0.1);
        \filldraw (0,1) circle (0.1);
        \filldraw (1,1) circle (0.1);
        \filldraw (2,1) circle (0.1);
        \filldraw (0,2) circle (0.1);
        \filldraw (1,2) circle (0.1);
        \filldraw (2,2) circle (0.1);
        \draw[fill,color=black,opacity=.2] (-0.5,1.5)--(-0.5,2.5)--(2.5,2.5)--(2.5,-0.5)--(1.5,-0.5)--(1.5,0.5)--(0.5,0.5)--(0.5,1.5)--cycle;
        \draw[fill,color=blue,opacity=.2] (-0.5,0.5)--(-0.5,1.5)--(0.5,1.5)--(0.5,0.5)--cycle;
        \node [anchor = north east] at (0,1) {$x$};
    \begin{scope}[shift = {(5,0)}]
        \filldraw (0,0) circle (0.1);
        \filldraw (1,0) circle (0.1);
        \filldraw (2,0) circle (0.1);
        \filldraw (0,1) circle (0.1);
        \filldraw (1,1) circle (0.1);
        \filldraw (2,1) circle (0.1);
        \filldraw (0,2) circle (0.1);
        \filldraw (1,2) circle (0.1);
        \filldraw (2,2) circle (0.1);
        \draw[fill,color=black,opacity=.2] (-0.5,1.5)--(-0.5,2.5)--(2.5,2.5)--(2.5,-0.5)--(1.5,-0.5)--(1.5,0.5)--(0.5,0.5)--(0.5,1.5)--cycle;
        \draw[fill,color=blue,opacity=.2] (0.5,-0.5)--(0.5,0.5)--(1.5,0.5)--(1.5,-0.5)--cycle;
        \node [anchor = north east] at (1,0) {$x$};
    \end{scope}
    \end{tikzpicture}
\end{center}

While we provide a proof here for clarity and similarity to the other cases, this result was previously shown as part of Miller's work on upsets and downsets, see \cite[Proposition~3.10]{miller}. See also \cite[Example~B]{CGRST}.

\begin{proof} First suppose $\cS\subseteq \cT$. Since $\cP$ has a unique maximum, $\cS\cap \cT= \cS$ is connected.
It follows from Proposition~\ref{prop:hom_spread} that $\Hom_\cP(\mathbb{I}_\cS,\mathbb{I}_\cT) \cong \field$.

Now suppose 
that $\cS \not\subseteq \cT$. Then there exists $x\in \cS\setminus \cT$. 
Now the unique maximum of $\cP$ is included in every upset, so the intersection $\cS\cap \cT$ is connected and nonempty. As $x \leq \cS\cap \cT$, Proposition~\ref{prop:hom_spread} implies that $\Hom_\cP(\mathbb{I}_\cS,\mathbb{I}_\cT) = 0$.

As we just proved that the existence of nonzero morphisms between upset modules induces the same order as inclusion for upsets, and that all nontrivial hom-spaces have dimension 1. It follows from Lemma~\ref{lem:irreducible}(1) that the $\cX$-irreducible morphisms are the cover relations, namely adding a single element at a time.  \end{proof}

For $\cX= \{ \mathbb{I}_{\langle a,b\rangle} | \; a  \leq  b\in \cP\}$ the set of segment modules, it is well known that there are non-zero morphisms $\mathbb{I}_{\langle a,b\rangle} \to \mathbb{I}_{\langle c,d \rangle}$ if and only if $c\leq a\leq  d \leq b$, and that all nonzero hom-spaces must have dimension 1. See for example \cite[Corollary~5.8]{BBH}. The $\cX$-irreducible morphisms can be described using this fact. \textit{Cf.} \cite[Example~H]{CGRST} in the case that $\cP$ is an $n$-dimensional grid.

\begin{proposition}\label{prop:morphismforsegments}
    Let $\cX= \left\{ \mathbb{I}_{\langle a,b\rangle} | \; a \leq b\in \cP\right\}$ be the set of segment modules. There is an $\cX$-irreducible morphism $f: \mathbb{I}_{\langle a,b\rangle }\rightarrow \mathbb{I}_{\langle c,d \rangle}$ for $ \langle a,b\rangle, \langle c,d\rangle \in \cX$ if and only if one of the following is verified:
    \begin{enumerate}
        \item $f$ is injective and $c\cover a\leq d=b$.
        \item $f$ is surjective and $c=a\leq d\cover b$.
    \end{enumerate}
\end{proposition}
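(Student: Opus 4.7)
The plan is to exploit the fact recalled just before the proposition, that $\Hom_\cP(\mathbb{I}_{\langle a,b\rangle},\mathbb{I}_{\langle c,d\rangle})$ is one-dimensional exactly when $c\le a\le d\le b$ and is zero otherwise, together with Proposition~\ref{prop:hom_spread}, which identifies the image of a nonzero such morphism as $\mathbb{I}_{\langle a,d\rangle}$, itself a segment module. Since every segment module has endomorphism ring $\field$, Lemma~\ref{lem:irreducible}(1) applies and reduces $\cX$-irreducibility to the combinatorial question of which factorisations through a segment module exist.

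First, for the forward direction, suppose $f$ is $\cX$-irreducible. The image $\mathbb{I}_{\langle a,d\rangle}$ lies in $\add(\cX)$, so Lemma~\ref{lem:irreducible}(2) forces $f$ to be injective or surjective. Injectivity gives $d=b$ together with $c\le a$, and surjectivity gives $c=a$ together with $d\le b$. If both $c=a$ and $d=b$ held, then $f$ would be an isomorphism, hence both a section and a retraction, contradicting irreducibility; thus exactly one of the two cases of the proposition is in force, modulo still having to verify the cover condition.

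To pin down the cover condition (and simultaneously handle the converse), I would treat the injective case in detail, the surjective case being dual. Assume $f$ is injective with $c\le a\le d=b$. By Lemma~\ref{lem:irreducible}(1), $f$ fails to be $\cX$-irreducible iff there is a segment $Z=\mathbb{I}_{\langle e,g\rangle}\notin\{\mathbb{I}_{\langle a,b\rangle},\mathbb{I}_{\langle c,b\rangle}\}$ and morphisms $\mathbb{I}_{\langle a,b\rangle}\to Z\to\mathbb{I}_{\langle c,b\rangle}$ with nonzero composite. The hom-criterion forces simultaneously $e\le a\le g\le b$ and $c\le e\le b\le g$, so that $g=b$ and $c\le e\le a$, with $e\notin\{a,c\}$. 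Such an $e$ exists iff there is an element strictly between $c$ and $a$, i.e., iff $c\cover a$ fails. The symmetric analysis in the surjective case forces $Z=\mathbb{I}_{\langle a,g\rangle}$ with $d< g< b$, and no such $g$ exists iff $d\cover b$. The one point requiring care, and which I expect to be the main obstacle to verify cleanly, is that the intermediate composite $\mathbb{I}_{\langle a,b\rangle}\to\mathbb{I}_{\langle e,b\rangle}\to\mathbb{I}_{\langle c,b\rangle}$ (and its dual) is genuinely nonzero, so that Lemma~\ref{lem:irreducible}(1) applies; but this is immediate, since evaluating at the point $a$ (respectively $b$) sends $1_\field\mapsto 1_\field\mapsto 1_\field$ because the maps in question are, up to scalar, the canonical inclusion/quotient maps between nested segment modules.
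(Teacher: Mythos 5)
Your proof is correct and follows essentially the same route as the paper's: both arguments reduce the problem to counting intermediate segments using the one-dimensionality of hom-spaces between segment modules and Lemma~\ref{lem:irreducible}. The only organizational difference is that you invoke Lemma~\ref{lem:irreducible}(2) up front to split into the injective and surjective cases, whereas the paper reaches the same dichotomy by first casing on $a\neq c$ versus $b\neq d$ and constructing the factorisation through $\mathbb{I}_{\langle c',b\rangle}$ with $c\leq c'\cover a$ (respectively its dual); your observation that the composites are nonzero by evaluation at $a$ (resp.\ $b$) is exactly the point the paper also relies on implicitly.
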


\begin{center}
    \begin{tikzpicture}[scale = 0.75]
        \filldraw (0,0) circle (0.1);
        \filldraw (1,0) circle (0.1);
        \filldraw (2,0) circle (0.1);
        \filldraw (0,1) circle (0.1);
        \filldraw (1,1) circle (0.1);
        \filldraw (2,1) circle (0.1);
        \filldraw (0,2) circle (0.1);
        \filldraw (1,2) circle (0.1);
        \filldraw (2,2) circle (0.1);
        \draw[fill,color=black,opacity=.2] (0.5,-0.5)--(0.5,2.5)--(2.5,2.5)--(2.5,-0.5)--cycle;
        \draw[fill,color=blue,opacity=.2] (-0.5,-0.5)--(-0.5,2.5)--(0.5,2.5)--(0.5,-0.5)--cycle;
        \node [anchor = north] at (1,0) {$a$};
        \node [anchor = north] at (0,0) {$c$};
        \node [anchor = south] at (2,2) {$b = d$};
    \begin{scope}[shift = {(5,0)}]
        \filldraw (0,0) circle (0.1);
        \filldraw (1,0) circle (0.1);
        \filldraw (2,0) circle (0.1);
        \filldraw (0,1) circle (0.1);
        \filldraw (1,1) circle (0.1);
        \filldraw (2,1) circle (0.1);
        \filldraw (0,2) circle (0.1);
        \filldraw (1,2) circle (0.1);
        \filldraw (2,2) circle (0.1);
        \draw[fill,color=black,opacity=.2] (0.5,-0.5)--(0.5,1.5)--(2.5,1.5)--(2.5,-0.5)--cycle;
        \draw[fill,color=orange,opacity=.2] (0.5,1.5)--(0.5,2.5)--(2.5,2.5)--(2.5,1.5)--cycle;
        \node [anchor = south] at (2,2) {$b$};
        \node [anchor = west] at (2,1) {$d$};
        \node [anchor = north] at (1,0) {$a = c$};
    \end{scope}
     \begin{scope}[shift = {(10,0)}]
        \filldraw (0,0) circle (0.1);
        \filldraw (1,0) circle (0.1);
        \filldraw (2,0) circle (0.1);
        \filldraw (0,1) circle (0.1);
        \filldraw (1,1) circle (0.1);
        \filldraw (2,1) circle (0.1);
        \filldraw (0,2) circle (0.1);
        \filldraw (1,2) circle (0.1);
        \filldraw (2,2) circle (0.1);
        \draw[fill,color=black,opacity=.2] (0.5,-0.5)--(0.5,2.5)--(1.5,2.5)--(1.5,-0.5)--cycle;
        \draw[fill,color=orange,opacity=.2] (1.5,-0.5)--(1.5,2.5)--(2.5,2.5)--(2.5,-0.5)--cycle;
        \node [anchor = south] at (2,2) {$b$};
        \node [anchor = south] at (1,2) {$d$};
        \node [anchor = north] at (1,0) {$a = c$};
    \end{scope}
    \end{tikzpicture}
\end{center}

\begin{proof} If $a < c$ or $ b < d$ then $\Hom_{\cP}\left(\mathbb{I}_{\langle a,b\rangle},\mathbb{I}_{\langle c,d\rangle}\right) = 0$ and we are done. Thus suppose that $c \leq a$ and $d \leq b$  and let $f: \mathbb{I}_{\langle a,b\rangle }\rightarrow \mathbb{I}_{\langle c,d \rangle}$ be nonzero. If $a=c$ and $b=d$, we have an endomorphism which is forced to be an isomorphism as segment modules are bricks.

Suppose $a\neq c$. Then $c < a$, and there exists $c'$ such that $c\leq c' \cover a$. Then $f$ has a factorisation of the shape $\mathbb{I}_{\langle a,b\rangle}\to \mathbb{I}_{\langle c',b\rangle}\to \mathbb{I}_{\langle c,d\rangle}$, which is trivial if and only if $f$ is irreducible.  Suppose the factorisation is trivial. The first morphism is not an isomorphism as $c'\notin \langle a,b\rangle$, so the second one must be. By Lemma~\ref{lem:irreducible}(1), we need only consider factorisations of the shape $\mathbb{I}_{\langle a,b\rangle} \to \mathbb{I}_{\langle x,y\rangle} \to \mathbb{I}_{\langle c',b\rangle}$. But then $b\leq y \leq b$, so $b=y$, and the second morphism is a non-isomophism if and only if $c'< x < a$. We conclude that, in this case, $f$ is irreducible if and only if $c \cover a \leq d = b$. Moreover, we have $\langle a,b\rangle \subseteq \langle c,d\rangle$, so $f$ is injective.

The case where $b \neq d$ is similar, and in that case the fact that $\langle a,b\rangle \supseteq \langle c,d\rangle$ implies that $f$ is surjective. \end{proof}

We next consider hook modules, \textit{cf.} \cite[Example~F]{CGRST} in the case that $\cP$ is a join-semilattice.

\begin{proposition}
    Let $\cX=\{\mathbb{I}_{\langle a,b\langle} | a\leq b\in \cP \}$ be the set of hook modules. Then $\Hom_\cP(\mathbb{I}_{\langle a,b\langle},\mathbb{I}_{\langle c,d\langle}) \cong \field$ if $c \leq a \not\geq d \leq b$ and $\Hom_\cP(\mathbb{I}_{\langle a,b\langle},\mathbb{I}_{\langle c,d\langle}) = 0$ otherwise. Moreover, there is an $\cX$-irreducible morphism $f: \mathbb{I}_{\langle a,b\langle} \rightarrow \mathbb{I}_{\langle c,d\langle} $ if and only if one of the following is verified:

    \begin{enumerate}
        \item $f$ is injective and $c\cover a \not\geq d= b$.
        \item $f$ is surjective and $c=a\not\geq d\cover b$.
    \end{enumerate}
\end{proposition}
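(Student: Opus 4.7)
The plan is to first compute the Hom-space using Proposition~\ref{prop:hom_spread}, and then invoke Lemma~\ref{lem:irreducible}(1) to translate the irreducibility question into a combinatorial search for ``intermediate'' hooks. For the Hom-space computation, I set $\cS = \langle a,b\langle$ and $\cT = \langle c,d\langle$ and observe that, since $a$ is the unique minimum of $\cS$, every $\mathcal{U} \in \mathfrak{I}(\cS,\cT)$ must contain $a$; thus there is at most one such component and its existence forces $a \in \cT$, that is, $c \leq a \not\geq d$. Assuming this, I let $\mathcal{U}$ be the connected component of $\cS \cap \cT$ containing $a$, for which the down-closure condition in $\cS$ is automatic. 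I would then show that $\mathcal{U}$ is up-closed in $\cT$ if and only if $d \leq b$: if $d \not\leq b$ then $b$ lies in $\cT \setminus \cS$ with $a \leq b$, violating up-closure; conversely, if $d \leq b$, any $y \in \cT$ with $y \geq a$ satisfies $y \not\geq d$, hence $y \not\geq b$ (else $y \geq b \geq d$), so $y \in \cS$, and the segment $[a,y]$ lies entirely in $\cS \cap \cT$, placing $y \in \mathcal{U}$.

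For the irreducibility part, every nonzero Hom-space between hooks is one-dimensional by the formula just established and every hook has endomorphism ring $\field$, so Lemma~\ref{lem:irreducible}(1) reduces the question to determining when there is no hook $\mathbb{I}_{\langle e,g\langle}$ different from the source and target fitting into a composition $\mathbb{I}_{\langle a,b\langle} \to \mathbb{I}_{\langle e,g\langle} \to \mathbb{I}_{\langle c,d\langle}$ with nonzero product. Applying the Hom-space formula, existence of a nonzero morphism on each side requires exactly $c \leq e \leq a$ and $d \leq g \leq b$; the conditions $e \not\geq g$, $a \not\geq g$, and $e \not\geq d$ all follow automatically from $a \not\geq d$. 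The composition is then nonzero, since both factor maps act as the identity on the copy of $\field$ at $a$.

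It remains to run a case analysis on whether $a = c$ and/or $b = d$. If $a \neq c$ and $b \neq d$, the choice $(e,g) = (a,d)$ produces a valid intermediate hook distinct from both endpoints, so $f$ is not $\cX$-irreducible. If $a = c$ and $b \neq d$, the intermediates have the form $(a,g)$ with $d \leq g \leq b$, and avoiding $g = b$ and $g = d$ requires some $g$ with $d \lneq g \lneq b$; the non-existence of such $g$ is precisely the condition $d \cover b$, in which case $f$ is irreducible, and the inclusion $\langle a,d\langle \subseteq \langle a,b\langle$ forces $f$ to be surjective. The case $a \neq c$, $b = d$ is symmetric, yielding $f$ irreducible exactly when $c \cover a$, with $f$ injective via $\langle a,b\langle \subseteq \langle c,b\langle$. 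Finally, $a = c$ and $b = d$ makes $f$ a nonzero endomorphism, hence an isomorphism, which is excluded from being irreducible. The principal obstacle is the careful bookkeeping in the Hom-space step, particularly identifying $b$ itself as the obstructing element of $\cT \setminus \cS$ when $d \not\leq b$; once the Hom-space formula is in hand, the irreducibility analysis is a clean combinatorial exercise.
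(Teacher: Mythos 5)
Your Hom-space computation is correct, and arguably tidier than the paper's: the paper asserts that $d\not\leq b$ implies $d\in\langle a,b\langle$, but that silently uses $a\leq d$, which need not hold under the hypothesis $c\leq a\not\geq d$. Your choice of $b$ as the witnessing element of $\cT\setminus\cS$ above $\mathcal{U}$ is the correct obstruction, and your up-closure verification when $d\leq b$ is complete. For the irreducibility part, the paper simply declares the argument ``analogous'' to the segment case (Proposition~\ref{prop:morphismforsegments}), which there proceeds by factoring through a cover $c'\cover a$ or $d\cover b'$; your route via Lemma~\ref{lem:irreducible}(1) and a direct enumeration of possible intermediates is a legitimately different (and more self-contained) approach, and your observation that $a\not\geq g$ and $e\not\geq d$ come for free from $a\not\geq d$ is a nice shortcut.

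There is, however, a genuine gap in the case $a\neq c$, $b\neq d$: you take the intermediate $(e,g)=(a,d)$, but for $\langle a,d\langle$ to be a hook one needs $a\leq d$, and the hypotheses only give $a\not\geq d$ --- which is strictly weaker, as $a$ and $d$ may be incomparable. A concrete failure: let $\cP=\{1,2\}^2$, $a=(1,2)$, $b=(2,2)$, $c=(1,1)$, $d=(2,1)$; then $c\leq a\not\geq d\leq b$, so $\Hom_\cP(\mathbb{I}_{\langle a,b\langle},\mathbb{I}_{\langle c,d\langle})\cong\field$, yet $a$ and $d$ are incomparable and $\langle a,d\langle$ is undefined. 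Note that your list of ``automatic'' constraints ($e\not\geq g$, etc.) does not include the one you actually need for $\langle e,g\langle$ to be a hook, namely $e\leq g$, and that one is \emph{not} automatic from $c\leq e\leq a$ and $d\leq g\leq b$. The repair is small: take $(e,g)=(c,b)$ instead. Then $c\leq a\leq b$ gives $c\leq b$ unconditionally, both factor Hom-spaces are nonzero by your formula, and $(c,b)$ is distinct from $(a,b)$ and from $(c,d)$ precisely because $a\neq c$ and $b\neq d$. In your remaining two nontrivial cases either $e=a$ is forced (with $a\leq d\leq g$) or $g=b$ is forced (with $e\leq a\leq b$), so $e\leq g$ is automatic there and the rest of your case analysis stands as written.
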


\begin{center}
    \begin{tikzpicture}[scale = 0.75]
        \filldraw (0,0) circle (0.1);
        \filldraw (1,0) circle (0.1);
        \filldraw (2,0) circle (0.1);
        \filldraw (-1,0) circle (0.1);
        \filldraw (0,1) circle (0.1);
        \filldraw (1,1) circle (0.1);
        \filldraw (2,1) circle (0.1);
        \filldraw (-1,1) circle (0.1);
        \filldraw (0,2) circle (0.1);
        \filldraw (1,2) circle (0.1);
        \filldraw (2,2) circle (0.1);
        \filldraw (-1,2) circle (0.1);
        \draw[fill,color=black,opacity=.2] (-0.5,-0.5)--(-0.5,2.5)--(1.5,2.5)--(1.5,0.5)--(2.5,0.5)--(2.5,-0.5)--cycle;
        \draw[fill,color=blue,opacity=.2] (-1.5,-0.5)--(-1.5,2.5)--(-0.5,2.5)--(-0.5,-0.5)--cycle;
        \node [anchor = north] at (0,0) {$a$};
        \node [anchor = north] at (-1,0) {$c$};
        \node [anchor = west] at (2,1) {$b = d$};
    \begin{scope}[shift = {(5.5,0)}]
        \filldraw (0,0) circle (0.1);
        \filldraw (1,0) circle (0.1);
        \filldraw (2,0) circle (0.1);
        \filldraw (-1,0) circle (0.1);
        \filldraw (0,1) circle (0.1);
        \filldraw (1,1) circle (0.1);
        \filldraw (2,1) circle (0.1);
        \filldraw (-1,1) circle (0.1);
        \filldraw (0,2) circle (0.1);
        \filldraw (1,2) circle (0.1);
        \filldraw (2,2) circle (0.1);
        \filldraw (-1,2) circle (0.1);
        \draw[fill,color=black,opacity=.2] (-0.5,-0.5)--(-0.5,2.5)--(0.5,2.5)--(0.5,0.5)--(2.5,0.5)--(2.5,-0.5)--cycle;
        \draw[fill,color=orange,opacity=.2] (0.5,0.5)--(0.5,2.5)--(1.5,2.5)--(1.5,0.5)--cycle;
        \node [anchor = south] at (2,1) {$b$};
        \node [anchor = north] at (0,0) {$a = c$};
        \node [anchor = south] at (1,1) {$d$};
    \end{scope}
     \begin{scope}[shift = {(11,0)}]
        \filldraw (0,0) circle (0.1);
        \filldraw (1,0) circle (0.1);
        \filldraw (2,0) circle (0.1);
        \filldraw (-1,0) circle (0.1);
        \filldraw (0,1) circle (0.1);
        \filldraw (1,1) circle (0.1);
        \filldraw (2,1) circle (0.1);
        \filldraw (-1,1) circle (0.1);
        \filldraw (0,2) circle (0.1);
        \filldraw (1,2) circle (0.1);
        \filldraw (2,2) circle (0.1);
        \filldraw (-1,2) circle (0.1);
        \draw[fill,color=black,opacity=.2] (-0.5,-0.5)--(-0.5,2.5)--(1.5,2.5)--(1.5,0.5)--(1.5,-0.5)--cycle;
        \draw[fill,color=orange,opacity=.2] (1.5,-0.5)--(1.5,0.5)--(2.5,0.5)--(2.5,-0.5)--cycle;
        \node [anchor = west] at (2,1) {$b$};
        \node [anchor = north] at (0,0) {$a = c$};
        \node [anchor = west] at (2,0) {$d$};
    \end{scope}
    \end{tikzpicture}
\end{center}

\begin{proof}To have a non-zero morphism, one needs $a\in \langle c,d\langle$, so $c\leq a \not\geq d$. If $d\not \leq b$, then $d\in \langle a,b\langle$ and any morphism is forced to be zero by Proposition~\ref{prop:hom_spread}. Otherwise, the hom-space is one-dimensional, again by Proposition~\ref{prop:hom_spread}.
The argument for relative irreducibles is analogous to the case of segments of Proposition~\ref{prop:morphismforsegments}.
\end{proof}

Before considering single-source spread modules, we need the following.

\begin{lemma}\label{lem:single_source}
     In a finite poset $\cP$, for every single-source spread $\langle a,B\rangle$, there exist some $B'\subseteq \langle a,\infty\langle$ such that $\langle a,B\rangle= \langle a,B' \langle$.
\end{lemma}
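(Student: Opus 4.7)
The plan is to let $B'$ be the antichain of minimal elements (in $\cP$) of the set
\[ C := \langle a,\infty\langle \setminus \langle a,B\rangle = \{c \in \cP \mid a \leq c \text{ and } c \not\leq b \text{ for every } b \in B\}. \]
Since $\cP$ is finite, $C$ is finite, so $B'$ is a well-defined finite antichain, and clearly $B' \subseteq C \subseteq \langle a,\infty\langle$. I would then verify the equality $\langle a,B\rangle = \langle a,B'\langle$ directly, which is equivalent to showing that
\[ C = \{c \in \langle a,\infty\langle \mid c \geq b' \text{ for some } b' \in B'\}. \]

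For the inclusion $(\supseteq)$, I would take $c \in \langle a,\infty\langle$ with $b' \leq c$ for some $b' \in B'$. Since $b' \in C$, no element of $B$ lies above $b'$; if some $b \in B$ satisfied $c \leq b$, then $b' \leq c \leq b$ would contradict $b' \in C$. Hence $c \in C$. For the inclusion $(\subseteq)$, I would take any $c \in C$ and use finiteness of $\cP$: the nonempty set $\{c' \in C \mid c' \leq c\}$ has a minimal element $b'$, which is in fact minimal in $C$ (any $c'' \in C$ with $c'' \leq b'$ also lies below $c$, so $c'' = b'$ by minimality). Thus $b' \in B'$ and $b' \leq c$, as required.

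I do not anticipate any real obstacle: the argument is a standard minimal-element extraction that relies only on finiteness of $\cP$ and the transitivity of $\leq$. The one minor point to be careful about is the edge case $C = \emptyset$, in which case $B' = \emptyset$ and by Definition~\ref{def:segment_hook}(3) the condition ``$c \not\geq B'$'' is vacuous, so $\langle a,B'\langle = \langle a,\infty\langle = \langle a,B\rangle$, consistent with the claim.
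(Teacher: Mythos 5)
Your proposal is correct and follows essentially the same route as the paper's proof: both take $B'$ to be the (finite) antichain of minimal elements of $C = \langle a,\infty\langle \setminus \langle a,B\rangle$, which the paper expresses by observing that $C$ is an upset (hence of the form $\langle B',\infty\langle$ in a finite poset) and then using a disjoint-union argument, while you unfold that same fact by directly checking the two inclusions of $C = \{c \in \langle a,\infty\langle \mid \exists b' \in B': b' \leq c\}$. Your $\supseteq$ inclusion is exactly the verification that $C$ is upward-closed, and your $\subseteq$ inclusion is the finiteness/minimal-element extraction the paper relies on implicitly.
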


\begin{proof} First, $\langle a,\infty\langle \ \setminus \ \langle a,B\rangle$ is a connected upset, so it must be equal to some $\langle B',\infty\langle$, and as $\cP$ is finite, so is $B'$. Now
$$\langle a,B\rangle \cup \langle B', \infty \langle = \langle a,\infty \langle = \langle a,B'\langle \ \cup \ \langle B',\infty \langle$$ where both unions are disjoint, so $\langle a,B\rangle = \langle a,B'\langle$ as required.
\end{proof}

We now consider single-source spread modules, \textit{cf.} \cite[Example~E]{CGRST}.

\begin{proposition}\label{prop:irreducibles_single_source}
    Let $\cX$ be the set of single-source spread modules. For all $\mathbb{I}_{\langle a,B\rangle} \in \cX$, let $B'$ be such that $\langle a,B\rangle=\langle a,B'\langle$ as in Lemma~\ref{lem:single_source}. Then there is an $\cX$-irreducible morphism $f: \mathbb{I}_{\langle a,B\rangle} \rightarrow \mathbb{I}_{\langle c,D\rangle}$ if and only if one of the following is verified:
    \begin{enumerate}
        \item $f$ is surjective and $\langle c,D\rangle=\langle a,B\rangle\setminus \{b\}$ for some $b\in B$.
        \item $f$ is injective, $c\cover a$ and $B'=D'$.
    \end{enumerate}
\end{proposition}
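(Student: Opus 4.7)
The plan is to first use Proposition~\ref{prop:hom_spread} to compute $\Hom_\cP(\mathbb{I}_\cS,\mathbb{I}_\cT)$ for single-source spreads $\cS=\langle a,B'\langle$ and $\cT=\langle c,D'\langle$ (invoking Lemma~\ref{lem:single_source} to put them in this form). Since $a$ is the unique source of $\cS$, the only connected component of $\cS\cap\cT$ that can satisfy condition~(1) of Proposition~\ref{prop:hom_spread} is the one containing $a$; so a nonzero morphism exists iff $a\in\cT$ (forcing $c\leq a$) and $\cT\cap\langle a,\infty\langle\ \subseteq\cS$, in which case the hom-space is one-dimensional and the image of any nonzero $f$ is $\mathbb{I}_\mathcal{U}$ with $\mathcal{U}=\cT\cap\langle a,\infty\langle$ itself a single-source spread. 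Since $\mathbb{I}_\mathcal{U}\in\add(\cX)$, Lemma~\ref{lem:irreducible}(2) shows that any $\cX$-irreducible $f$ must be injective (equivalently, $\mathcal{U}=\cS$, i.e., $\cS\subseteq\cT$) or surjective (equivalently, $\mathcal{U}=\cT$, forcing $\cT\subseteq\cS$ and $c=a$). This splits the analysis into the two cases of the proposition.

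For the surjective case, $c=a$ and $\cT\subsetneq\cS$; I would show $\cS\setminus\cT$ is upward-closed in $\cS$ (because $\cT=\langle a,D'\langle$ is obtained from $\cS$ by an upper-boundary condition), so it contains a maximal element $b$ of $\cS$. Then $\cS\setminus\{b\}$ is itself a single-source spread with source $a$. If $\cT\neq\cS\setminus\{b\}$, the factorization $\mathbb{I}_\cS\to\mathbb{I}_{\cS\setminus\{b\}}\to\mathbb{I}_\cT$ (with all maps and the composition nonzero, by the Hom characterization above) contradicts Lemma~\ref{lem:irreducible}(1). Conversely, for $\cT=\cS\setminus\{b\}$, any intermediate $\langle e,F\rangle$ must satisfy $e\leq a$ and $e\in\cS\setminus\{b\}$, forcing $e=a$ and $\cS\setminus\{b\}\subseteq\langle e,F\rangle\subseteq\cS$, so $\langle e,F\rangle\in\{\cS,\cT\}$ and Lemma~\ref{lem:irreducible}(1) yields irreducibility.

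For the injective case, $\cS\subseteq\cT$, $c<a$, and one verifies $\cS=\cT\cap\langle a,\infty\langle$. I would prove $c\cover a$ by contradiction: if there were $c<c'<a$, then $\cZ:=\cT\cap\langle c',\infty\langle\ =\{x:c'\leq x,\ x\not\geq D'\}$ is single-source with source $c'\notin\{a,c\}$, and $f$ factors nontrivially through $\mathbb{I}_\cZ$, contradicting Lemma~\ref{lem:irreducible}(1). To prove $B'=D'$, I would observe that $\cS=\cT\cap\langle a,\infty\langle$ implies each $b'\in B'$ lies above some $d\in D'$ (since $b'\notin\cS$ forces $b'\notin\cT$), whence $\cZ':=\langle c,B'\langle$ contains $\cT$; by the uniqueness part of Lemma~\ref{lem:single_source}, $\cZ'=\cT$ iff $B'=D'$, so $B'\neq D'$ yields a witness $\mathbb{I}_{\cZ'}$ (with source $c\neq a$) violating irreducibility. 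Conversely, given $c\cover a$ and $B'=D'$, any intermediate $\langle e,F\rangle$ has $c\leq e\leq a$, hence $e\in\{c,a\}$; the case $e=a$ sandwiches $\langle e,F\rangle$ into $\{\cS\}$ as before, and the case $e=c$ combined with $B'=D'$ and $\cT\subseteq\langle e,F\rangle$ forces $\langle e,F\rangle=\cT$ via the same Hom-space constraints.

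The main obstacle will be the $B'=D'$ step in the injective case: it requires translating between the two parameterizations of a single-source spread given by Lemma~\ref{lem:single_source} and recognizing that the canonical antichain $B'$ of $\cS$ inside $\langle a,\infty\langle$ is obtained from $D'$ by ``lifting'' each $d\in D'$ incomparable with $a$ to its minimal upper bounds above $a$ -- this is precisely the obstruction that makes $\cZ'=\langle c,B'\langle$ a valid intermediate single-source spread whenever $D'\not\subseteq\langle a,\infty\langle$.
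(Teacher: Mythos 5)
Your proposal is correct and follows the same overall strategy as the paper: reduce to injective/surjective nonzero maps via Proposition~\ref{prop:hom_spread} and Lemma~\ref{lem:irreducible}, then detect irreducibility by explicitly constructing factorisations through intermediate single-source spread modules. The one genuine difference is in the injective case: the paper factors once through $\mathbb{I}_{\langle a',B'\langle}$, where $a'$ is a maximal element of $\{x\mid c\le x<a\}$, and extracts $c\cover a$ and $B'=D'$ simultaneously from the forced isomorphism $\langle a',B'\langle=\langle c,D'\langle$; you instead establish $c\cover a$ and $B'=D'$ separately, factoring through $\cZ=\cT\cap\langle c',\infty\langle$ and $\cZ'=\langle c,B'\langle$ respectively. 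Your variant is slightly longer but has the advantage of isolating the two conclusions, and your observation in the surjective case that $\cS\setminus\cT$ is upward-closed in $\cS$ (so that the removed $b$ may always be taken to be a maximal element of $\cS$, hence in $B$) makes a point that the paper's phrasing glosses over.
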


\begin{center}
    \begin{tikzpicture}[scale = 0.75]
        \filldraw (0,0) circle (0.1);
        \filldraw (1,0) circle (0.1);
        \filldraw (2,0) circle (0.1);
        \filldraw (-1,0) circle (0.1);
        \filldraw (-2,0) circle (0.1);
        \filldraw (0,1) circle (0.1);
        \filldraw (1,1) circle (0.1);
        \filldraw (2,1) circle (0.1);
        \filldraw (-1,1) circle (0.1);
        \filldraw (-2,1) circle (0.1);
        \filldraw (0,2) circle (0.1);
        \filldraw (1,2) circle (0.1);
        \filldraw (2,2) circle (0.1);
        \filldraw (-1,2) circle (0.1);
        \filldraw (-2,2) circle (0.1);
        \draw[fill,color=black,opacity=.2] (-1.5,-0.5)--(-1.5,1.5)--(-0.5,1.5)--(0.5,1.5)--(0.5,0.5)--(2.5,0.5)--(2.5,-0.5)--cycle;
        \draw[fill,color=orange,opacity=.2] (-1.5,1.5)--(-1.5,2.5)--(-0.5,2.5)--(-0.5,1.5)--cycle;
        \node [anchor = north] at (-1,0) {$a = c$};
        \node [anchor = south west] at (-1,2) {$b$};
    \begin{scope}[shift = {(8,0)}]
        \filldraw (0,0) circle (0.1);
        \filldraw (1,0) circle (0.1);
        \filldraw (2,0) circle (0.1);
        \filldraw (-1,0) circle (0.1);
        \filldraw (-2,0) circle (0.1);
        \filldraw (0,1) circle (0.1);
        \filldraw (1,1) circle (0.1);
        \filldraw (2,1) circle (0.1);
        \filldraw (-1,1) circle (0.1);
        \filldraw (-2,1) circle (0.1);
        \filldraw (0,2) circle (0.1);
        \filldraw (1,2) circle (0.1);
        \filldraw (2,2) circle (0.1);
        \filldraw (-1,2) circle (0.1);
        \filldraw (-2,2) circle (0.1);
        \draw[fill,color=black,opacity=.2] (-1.5,-0.5)--(-1.5,2.5)--(-0.5,2.5)--(-0.5,1.5)--(0.5,1.5)--(0.5,0.5)--(2.5,0.5)--(2.5,-0.5)--cycle;
        \draw[fill,color=orange,opacity=.2] (0.5,0.5)--(0.5,1.5)--(1.5,1.5)--(1.5,0.5)--cycle;
        \node [anchor = north] at (-1,0) {$a = c$};
        \node [anchor = south west] at (1,1) {$b$};
    \end{scope}
     \begin{scope}[shift = {(0,-4)}]
        \filldraw (0,0) circle (0.1);
        \filldraw (1,0) circle (0.1);
        \filldraw (2,0) circle (0.1);
        \filldraw (-1,0) circle (0.1);
        \filldraw (-2,0) circle (0.1);
        \filldraw (0,1) circle (0.1);
        \filldraw (1,1) circle (0.1);
        \filldraw (2,1) circle (0.1);
        \filldraw (-1,1) circle (0.1);
        \filldraw (-2,1) circle (0.1);
        \filldraw (0,2) circle (0.1);
        \filldraw (1,2) circle (0.1);
        \filldraw (2,2) circle (0.1);
        \filldraw (-1,2) circle (0.1);
        \filldraw (-2,2) circle (0.1);
        \draw[fill,color=black,opacity=.2] (-1.5,-0.5)--(-1.5,2.5)--(-0.5,2.5)--(-0.5,1.5)--(0.5,1.5)--(0.5,0.5)--(1.5,0.5)--(1.5,-0.5)--cycle;
        \draw[fill,color=orange,opacity=.2] (1.5,-0.5)--(1.5,0.5)--(2.5,0.5)--(2.5,-0.5)--cycle;
        \node [anchor = north] at (-1,0) {$a = c$};
        \node [anchor = north west] at (2,0) {$b$};
    \end{scope}
    \begin{scope}[shift = {(8,-4)}]
        \filldraw (0,0) circle (0.1);
        \filldraw (1,0) circle (0.1);
        \filldraw (2,0) circle (0.1);
        \filldraw (-1,0) circle (0.1);
        \filldraw (-2,0) circle (0.1);
        \filldraw (0,1) circle (0.1);
        \filldraw (1,1) circle (0.1);
        \filldraw (2,1) circle (0.1);
        \filldraw (-1,1) circle (0.1);
        \filldraw (-2,1) circle (0.1);
        \filldraw (0,2) circle (0.1);
        \filldraw (1,2) circle (0.1);
        \filldraw (2,2) circle (0.1);
        \filldraw (-1,2) circle (0.1);
        \filldraw (-2,2) circle (0.1);
        \draw[fill,color=black,opacity=.2] (-1.5,-0.5)--(-1.5,2.5)--(-0.5,2.5)--(-0.5,1.5)--(1.5,1.5)--(1.5,0.5)--(2.5,0.5)--(2.5,-0.5)--cycle;
        \draw[fill,color=blue,opacity=.2] (-2.5,-0.5)--(-2.5,2.5)--(-1.5,2.5)--(-1.5,-0.5)--cycle;
        \node [anchor = north] at (-1,0) {$c$};
        \node [anchor = north] at (-2,0) {$a$};
    \end{scope}
    \end{tikzpicture}
\end{center}

\begin{proof} First note that the hom-space between single-source spread modules has dimension at most one by Proposition~\ref{prop:hom_spread}, and so Lemma~\ref{lem:irreducible}(1) applies. Now let $f: \mathbb{I}_{\langle a,B\rangle} \to \mathbb{I}_{\langle c,D\rangle}$ be a non-zero
morphism. Then $a\in \langle c,D\rangle$, so $c\leq a$. 

First suppose $c=a$. Then by Proposition~\ref{prop:hom_spread}, we must have $\langle a,B\rangle \supseteq \langle a,D\rangle$ and $f$ is surjective. Now suppose $f$ is irreducible. Then we cannot have $B=D$ as it would lead to an isomorphism. Thus there exists an element $x\in \langle a,B\rangle \setminus \langle a,D\rangle$, and some $b\in B$ such that $x\leq b$. (Note that $b \notin D$ because $d \notin \langle a,D\rangle$.) There is a factorisation of the shape $\mathbb{I}_{\langle a,B\rangle} \to \mathbb{I}_{\langle a,B\rangle\setminus \{b\}} \to \mathbb{I}_{\langle a,D\rangle }$, which must be trivial, so $\langle c,D\rangle=\langle a,D\rangle = \langle a,B\rangle \setminus \{b\}$. Reciprocally, suppose condition (1) is verified, 
and suppose for a contradiction that there exists a proper factorisation $\mathbb{I}_{\langle a,B\rangle} \to \mathbb{I}_{\langle x,Y\rangle} \to \mathbb{I}_{\langle a,B\rangle \setminus \{b\}}$. Then $a\leq x \leq a$ so $x=a$. As we have morphisms, we have $\langle a,B\rangle \setminus \{b\}\subseteq \langle a,Y\rangle \subseteq \langle a,B\rangle$ but as the left and right term differ by a single element, one of these two inclusions must be an equality. In each case, a morphism of the supposedly proper factorisation is an isomorphism, a contradiction.

Now suppose $c < a$. By Proposition~\ref{prop:hom_spread}, $\im f$ is a single-source spread module, and so it suffices to consider only the case that $f$ is injective by Lemma~\ref{lem:irreducible}(2); that is, we can assume that $\langle c,D\rangle \subseteq \langle a,B\rangle$. Let $a'$ be a maximal element of $\{x\in \cP | \; c\leq x < a\}$.  Then $a'\in \langle c,D\rangle$. Then $f$ has a factorisation of the shape $\mathbb{I}_{\langle a, B'\langle} \to \mathbb{I}_{\langle a',B'\langle} \xto{g} \mathbb{I}_{\langle c,D'\langle} $. The first morphism is simply induced by inclusion and $g$ is induced by $g_{a'}=f_a$. The left morphism cannot be invertible as $a' \not \in \langle a,B'\langle$. If $f$ is $\cX$-irreducible, $g$ must then be invertible, so $\langle c,D'\langle \; = \langle a', B'\langle$. Then $c\cover a$ and $B'=D'$ as required in condition (2).  Reciprocally, suppose condition (2) is verified,
and suppose for a contradiction that there is a proper factorisation of $f$ of the shape $\mathbb{I}_{\langle a,B' \langle} \xto{h} \mathbb{I}_{\langle x,Y'\langle} \to \mathbb{I}_{\langle c,B'\langle}$. So $c\leq x \leq a$. But then as $c \cover a$, we have $c=x$ or $x=a$.  In the first case, we have $\langle x,Y'\langle \; =\langle c,Y'\langle$. As $h$ is non-zero, the linear maps $\mathbb{I}_{\langle x,Y'\rangle}(a,z)$ must be non-zero for all $z\in \langle a,B'\langle$. Indeed, suppose there exist a $z$ such that $\mathbb{I}_{\langle x,Y'\rangle}(a,z)=0$. Then the commutativity condition $h_z\circ \mathbb{I}_{\langle a,B\rangle} (a,z)= \mathbb{I}_{\langle x,Y'\langle}(a,z) \circ h_a$ implies that $h_z\circ 1=0$, so $h_z=0$, which implies $h_a=0$ and $h=0$, a contradiction. This implies $\langle Y',\infty\langle \supseteq \langle B',\infty\langle$. The right morphism similarly imposes $\langle Y',\infty \langle \; \subseteq \langle B', \infty\langle$, so $Y'=B'$. The second case where $x=a$ is similar.
\end{proof}

Finally let's look at irreducible relative to the set of all spread modules.

\begin{proposition}\label{prop:irreduciblesbetweenspreads}
    Let $\cX=\{\mathbb{I}_{\langle A,B\rangle}| \; A,B\subseteq \cP\}$ be the set of spread modules. There exist an $\cX$-irreducible morphism $f: \mathbb{I}_{\langle A,B\rangle}\rightarrow \mathbb{I}_{\langle C,D\rangle}$ if and only if one of the following holds:
    \begin{enumerate}
    \item $f$ is surjective and $\langle A,B\rangle = \langle C,D\rangle \ \cup\; \rangle D,x\rangle$ for some $\langle C,D\rangle \cover x$.

    \item $f$ is injective and $\langle A,B\rangle$ is a connected component of $\langle C,D\rangle\setminus \{c\}$ for some $c\in C$ such that $\langle C,D\rangle= \langle A,B\rangle \cup \langle c,A\langle$. 
    \end{enumerate}
\end{proposition}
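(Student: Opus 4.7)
By Proposition~\ref{prop:hom_spread}, the image of any morphism between spread modules is a finite direct sum of spread modules, hence lies in $\add(\cX)$; Lemma~\ref{lem:irreducible}(2) then forces any $\cX$-irreducible $f\colon\mathbb{I}_{\langle A,B\rangle}\to\mathbb{I}_{\langle C,D\rangle}$ to be either injective or surjective, and I would analyze these two cases separately. Unlike the single-source setting of Proposition~\ref{prop:irreducibles_single_source}, the hom-space here may have dimension greater than one, so Lemma~\ref{lem:irreducible}(1) does not apply directly and I must consider factorizations through arbitrary $U\in\add(\cX)$, not merely through single indecomposables.

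In the surjective case, I would interpret $f$ via Proposition~\ref{prop:hom_spread} as the datum of a connected component $\mathcal{U}=\langle C,D\rangle$ of $\langle A,B\rangle\cap\langle C,D\rangle$ satisfying the upward and downward closure conditions of that proposition; this forces $\langle C,D\rangle\subseteq\langle A,B\rangle$ and the kernel support $K:=\langle A,B\rangle\setminus\langle C,D\rangle$ to be an upset of $\langle A,B\rangle$. For the ``only if'' direction I would pick a maximal element $x\in K$ (necessarily in $B$) and produce a factorization $\mathbb{I}_{\langle A,B\rangle}\twoheadrightarrow\mathbb{I}_{\langle A,B\rangle\setminus\{x\}}\twoheadrightarrow\mathbb{I}_{\langle C,D\rangle}$, rerouted through a direct sum of spread modules if removing $x$ disconnects $\langle A,B\rangle$; this factorization is proper unless $K=\rangle D,x\rangle$ and $\langle C,D\rangle\cover x$ in the sense of Definition~\ref{def:covers}. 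Conversely, if condition (1) holds, the cover property precludes any strict intermediate spread $\langle E,F\rangle$ with $\langle C,D\rangle\subsetneq\langle E,F\rangle\subsetneq\langle A,B\rangle$ from admitting morphisms to and from the correct spread modules via Proposition~\ref{prop:hom_spread}. The injective case is handled dually, with ``maximal element of $K$ in $B$'' replaced by ``minimal element of $\langle C,D\rangle\setminus\langle A,B\rangle$ in $C$'' and cohooks $\rangle D,x\rangle$ replaced by hooks $\langle c,A\langle$; here $\langle A,B\rangle$ is recognized as a connected upset of $\langle C,D\rangle$, and irreducibility forces the removed part to be exactly one such hook.

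The main technical obstacle is that, because $A$, $B$, $C$, and $D$ can be antichains of size greater than one, a single extremal removal or addition may disconnect the underlying set, so candidate intermediate factorizations may be forced through a direct sum of spread modules rather than through a single spread. Verifying via Proposition~\ref{prop:hom_spread} that such a ``split'' intermediate $U\in\add(\cX)$ supports the required morphisms (i.e., that each relevant connected component of intersection meets the downward and upward closure conditions of that proposition), combined with the covering axioms of Definition~\ref{def:covers}, should pin down exactly the minimal-complement situations described by conditions (1) and (2).
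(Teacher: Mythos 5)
Your high-level plan agrees with the paper on its first step: both invoke Lemma~\ref{lem:irreducible}(2) (together with Proposition~\ref{prop:hom_spread}, which guarantees $\im f \in \add(\cX)$) to reduce to the two cases where $f$ is injective or surjective, and then analyze these separately. But there is one misapprehension and, more seriously, a genuine flaw in the ``only if'' argument.

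First, the misapprehension: you write that Lemma~\ref{lem:irreducible}(1) ``does not apply directly'' because hom-spaces between arbitrary spread modules may be multi-dimensional. However, once one has reduced to the surjective (resp.\ injective) case, the intersection $\cS\cap\cT$ is all of $\langle C,D\rangle$ (resp.\ $\langle A,B\rangle$), which is connected, so Proposition~\ref{prop:hom_spread} already forces $\dim_\field\Hom_\cP(\mathbb{I}_{\langle A,B\rangle},\mathbb{I}_{\langle C,D\rangle}) = 1$. The paper in fact uses Lemma~\ref{lem:irreducible}(1) freely in the converse direction, reducing to factorizations through a single indecomposable $\mathbb{I}_{\langle E,F\rangle}$ with the second map irreducible.

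Second, and more importantly, your factorization strategy for the ``only if'' direction in the surjective case does not work as stated. You pick \emph{a} maximal element $x$ of $K = \langle A,B\rangle\setminus\langle C,D\rangle$ and factor $\mathbb{I}_{\langle A,B\rangle}\twoheadrightarrow\mathbb{I}_{\langle A,B\rangle\setminus\{x\}}\twoheadrightarrow\mathbb{I}_{\langle C,D\rangle}$ (splitting the middle object when disconnected), claiming this is proper unless $K=\rangle D,x\rangle$ with $\langle C,D\rangle\cover x$. This is false for some choices of maximal element. Take $\cP$ to contain $\langle A,B\rangle = \{(2,2),(3,1),(4,1),(3,2)\}$ and $\langle C,D\rangle = \{(2,2)\}$, so $K = \{(3,1),(4,1),(3,2)\}$ with maximal elements $(4,1)$ and $(3,2)$. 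Neither choice of covering element satisfies $\langle A,B\rangle = \langle C,D\rangle\cup\rangle D,x\rangle$, so $f$ is \emph{not} $\cX$-irreducible. Yet if you choose $x = (3,2)$, the set $\langle A,B\rangle\setminus\{x\} = \{(2,2)\}\sqcup\{(3,1),(4,1)\}$ disconnects with $\langle C,D\rangle = \{(2,2)\}$ as a full component, so the second map $\mathbb{I}_{\{(2,2)\}}\oplus\mathbb{I}_{\{(3,1),(4,1)\}}\to\mathbb{I}_{\{(2,2)\}}$ is the projection, hence a retraction: the factorization you construct is not proper, and you cannot conclude anything from it. (Choosing $x = (4,1)$ instead happens to give a proper factorization through $\mathbb{I}_{\{(2,2),(3,1),(3,2)\}}$, but your plan does not explain how to identify the good choice.) The paper sidesteps this entirely by building \emph{up} from the target: it fixes $x$ with $\langle C,D\rangle\cover x$ and factors through $\mathbb{I}_{\langle C,D\rangle\cup\rangle D,x\rangle}$, the minimal spread strictly enlarging $\langle C,D\rangle$ in the direction of $x$; this intermediate is uniquely determined and the propriety of the factorization is immediate precisely when $\langle A,B\rangle\neq\langle C,D\rangle\cup\rangle D,x\rangle$. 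The dual issue affects your treatment of the injective case. Finally, the ``if'' direction, which you describe only as ``the cover property precludes any strict intermediate spread,'' is the most delicate part of the paper's argument (it iterates the forward construction on the irreducible factor $h$ and derives a contradiction); asserting it needs proof.
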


\begin{center}
    \begin{tikzpicture}[scale = 0.75]
        \filldraw (0,0) circle (0.1);
        \filldraw (1,0) circle (0.1);
        \filldraw (2,0) circle (0.1);
        \filldraw (-1,0) circle (0.1);
        \filldraw (0,1) circle (0.1);
        \filldraw (1,1) circle (0.1);
        \filldraw (2,1) circle (0.1);
        \filldraw (-1,1) circle (0.1);
        \filldraw (0,2) circle (0.1);
        \filldraw (1,2) circle (0.1);
        \filldraw (2,2) circle (0.1);
        \filldraw (-1,2) circle (0.1);
        \filldraw (0,-1) circle (0.1);
        \filldraw (1,-1) circle (0.1);
        \filldraw (2,-1) circle (0.1);
        \filldraw (-1,-1) circle (0.1);
        \draw[fill,color=black,opacity=.2] (0.5,-0.5)--(0.5,0.5)--(-0.5,0.5)--(-0.5,1.5)--(1.5,1.5)--(1.5,-0.5)--cycle;
        \draw[fill,color=orange,opacity=.2] (-1.5,1.5)--(-1.5,2.5)--(0.5,2.5)--(0.5,1.5)--cycle;
        \node [anchor = south west] at (0,2) {$x$};
    \begin{scope}[shift = {(7,0)}]
         \filldraw (0,0) circle (0.1);
        \filldraw (1,0) circle (0.1);
        \filldraw (2,0) circle (0.1);
        \filldraw (-1,0) circle (0.1);
        \filldraw (0,1) circle (0.1);
        \filldraw (1,1) circle (0.1);
        \filldraw (2,1) circle (0.1);
        \filldraw (-1,1) circle (0.1);
        \filldraw (0,2) circle (0.1);
        \filldraw (1,2) circle (0.1);
        \filldraw (2,2) circle (0.1);
        \filldraw (-1,2) circle (0.1);
        \filldraw (0,-1) circle (0.1);
        \filldraw (1,-1) circle (0.1);
        \filldraw (2,-1) circle (0.1);
        \filldraw (-1,-1) circle (0.1);
        \draw[fill,color=black,opacity=.2] (0.5,-0.5)--(0.5,0.5)--(-0.5,0.5)--(-0.5,1.5)--(-1.5,1.5)--(-1.5,2.5)--(0.5,2.5)--(0.5,1.5)--(1.5,1.5)--(1.5,-0.5)--cycle;
        \draw[fill,color=blue,opacity=.2] (-1.5,0.5)--(-1.5,1.5)--(-0.5,1.5)--(-0.5,0.5)--cycle;
        \node [anchor = north east] at (-1,1) {$c$};
    \end{scope}
        \begin{scope}[shift = {(0,-5)}]
         \filldraw (0,0) circle (0.1);
        \filldraw (1,0) circle (0.1);
        \filldraw (2,0) circle (0.1);
        \filldraw (-1,0) circle (0.1);
        \filldraw (0,1) circle (0.1);
        \filldraw (1,1) circle (0.1);
        \filldraw (2,1) circle (0.1);
        \filldraw (-1,1) circle (0.1);
        \filldraw (0,2) circle (0.1);
        \filldraw (1,2) circle (0.1);
        \filldraw (2,2) circle (0.1);
        \filldraw (-1,2) circle (0.1);
        \filldraw (0,-1) circle (0.1);
        \filldraw (1,-1) circle (0.1);
        \filldraw (2,-1) circle (0.1);
        \filldraw (-1,-1) circle (0.1);
        \draw[fill,color=black,opacity=.2] (0.5,-0.5)--(0.5,0.5)--(-0.5,0.5)--(-0.5,1.5)--(-1.5,1.5)--(-1.5,2.5)--(0.5,2.5)--(0.5,1.5)--(1.5,1.5)--(1.5,-0.5)--cycle;
        \draw[fill,color=blue,opacity=.2] (-0.5,-0.5)--(-0.5,0.5)--(0.5,0.5)--(0.5,-0.5)--cycle;
        \node [anchor = north east] at (0,0) {$c$};
    \end{scope}
    \begin{scope}[shift = {(7,-5)}]
         \filldraw (0,0) circle (0.1);
        \filldraw (1,0) circle (0.1);
        \filldraw (2,0) circle (0.1);
        \filldraw (-1,0) circle (0.1);
        \filldraw (0,1) circle (0.1);
        \filldraw (1,1) circle (0.1);
        \filldraw (2,1) circle (0.1);
        \filldraw (-1,1) circle (0.1);
        \filldraw (0,2) circle (0.1);
        \filldraw (1,2) circle (0.1);
        \filldraw (2,2) circle (0.1);
        \filldraw (-1,2) circle (0.1);
        \filldraw (0,-1) circle (0.1);
        \filldraw (1,-1) circle (0.1);
        \filldraw (2,-1) circle (0.1);
        \filldraw (-1,-1) circle (0.1);
        \draw[fill,color=black,opacity=.2] (0.5,-0.5)--(0.5,0.5)--(-0.5,0.5)--(-0.5,1.5)--(-1.5,1.5)--(-1.5,2.5)--(0.5,2.5)--(0.5,1.5)--(1.5,1.5)--(1.5,-0.5)--cycle;
        \draw[fill,color=blue,opacity=.2] (0.5,-1.5)--(0.5,-0.5)--(2.5,-0.5)--(2.5,-1.5)--cycle;
        \node [anchor = north east] at (1,-1) {$c$};
    \end{scope}
    \end{tikzpicture}
\end{center}

\begin{proof} Let $f: \mathbb{I}_{\langle A,B\rangle}\to \mathbb{I}_{\langle C,D\rangle}$ be a  
nonzero morphism. By Proposition~\ref{prop:hom_spread} and Lemma~\ref{lem:irreducible}, it suffices to consider only the case where $\langle A,B\rangle \neq \langle C,D\rangle$ and $f$ is either injective or surjective.

Suppose first that $f$ is surjective. Then $f_x\neq 0$ for every $x\in \langle C,D\rangle$, so $\langle C,D\rangle\subseteq \langle A,B\rangle$. Then $\langle A,B\rangle$ must include an element $x$ such that $x\not\leq \langle C,D\rangle$. Moreover, we can assume that $\langle C,D\rangle \leq x$ since $\langle A,B\rangle$ is connected. Without loss of generality, suppose $\langle C,D\rangle \cover x$. By convexity, $\langle A,x\rangle\subseteq \langle A,B\rangle$.
Now suppose $\rangle D,x\rangle\not\subseteq \langle A,B\rangle$. .
Then we get a proper factorisation of $f$ of the shape $\mathbb{I}_{\langle A,B\rangle} \xto{h} \mathbb{I}_{\langle C,D\rangle \cup \rangle D,x\rangle} \to \mathbb{I}_{\langle C,D\rangle}$, where $h_y=f_y$ for all $y\in \langle A,B\rangle$. Note that for all $y\in \ \rangle D,x\rangle$, we have $h_y=f_y=0$. Thus if $\langle A,B\rangle \neq \langle C,D\rangle \cup \rangle D,x\rangle$, then $f$ is not irreducible.

Now suppose $\langle A,B\rangle \neq \langle C,D\rangle \cup \rangle D,x\rangle$. We claim that $f$ is $\cX$-irreducible. For a contradiction, suppose note. Then Lemma~\ref{lem:irreducible} implies that we have a proper factorisation of the shape $$\mathbb{I}_{\langle C,D\rangle \cup \rangle D,x\rangle} \xto{g} \mathbb{I}_{\langle E,F\rangle} \xto{h} \mathbb{I}_{\langle C,D\rangle}$$ with $h$ irreducible. As $f$ is surjective, so is $h$. Then $\langle E,F\rangle$ must contain $\langle C,D \rangle$. By the same argument as before, it must also contain $\rangle D, x'\rangle$ for some $\langle C,D\rangle \cover x'$. As $x'$ is maximal in $\langle E,F\rangle$ and $g$ is a morphism, we have that $x'$ must also be in $\langle C,D\rangle \cup \rangle D,x\rangle$. But the only element covering $\langle C,D\rangle$ in $\langle C,D\rangle \cup \rangle D,x\rangle$ is $x$, so $x'=x$. So $\langle E,F\rangle$ contains the support of the left most module. Then $g$ is injective (by Proposition~\ref{prop:hom_spread}) and $\langle E,F\rangle$ must contain some element that is not in $\langle C,D\rangle \cup \rangle D,x\rangle$, say $y$. As before, $y \not\leq \langle C,D\rangle$. Then $h$ has a factorisation of the shape $\mathbb{I}_{\langle E,F\rangle} \to \mathbb{I}_{\langle C,D\rangle \cup \rangle D,x\rangle} \to \mathbb{I}_{\langle C,D\rangle}$, that is proper because the first module is supported at $y$, a contradiction to its $\cX$-irreducibility. 

Now suppose instead that $f$ is injective, so $f_x\neq 0$ for all $x\in \langle A,B\rangle$, which implies $\langle A,B\rangle\subseteq \langle C,D\rangle$. As $f$ is not an isomorphism, there must be some $x\in \langle C,D\rangle \setminus \langle A,B\rangle$, and there must be some $c\in C$ such that $c\leq x$, which implies $c\not \in \langle A,B\rangle$. This implies we get a factorisation of the shape $\mathbb{I}_{\langle A,B\rangle} \to \mathbb{I}_{\langle C,D\rangle \setminus \{c\}} \to \mathbb{I}_{\langle C,D\rangle}$, where the first morphism is injective since $f$ is. Note that while $\langle C,D\rangle\setminus \{c\}$ need not be connected, it still yields a module and a factorisation of the shape above. As $\langle A,B\rangle$ is connected, it is thus contained in one of the connected components of $\langle C,D\rangle$, say $\langle E,F\rangle$. We then get a factorisation of the shape $\mathbb{I}_{\langle A,B\rangle}\to \mathbb{I}_{\langle C,D\rangle\setminus\{c\}}\to \mathbb{I}_{\langle E,F\rangle} \to \mathbb{I}_{\langle C,D\rangle}$. Now as the third module is indecomposable, and the last morphism of that sequence is not invertible, either the composition of the first two morphisms is invertible or $f$ is not $\cX$-irreducible. In the second case we are done, so suppose the composition is invertible; that is, that $\langle A,B\rangle=\langle E,F\rangle$. Now we get a factorisation of the shape $\mathbb{I}_{\langle A,B\rangle} \to \mathbb{I}_{\langle A,B\rangle\cup \langle C,A\langle}\to \mathbb{I}_{\langle C,D\rangle}$. As $c$ is in the support of the second module but not the first, the leftmost morphism cannot be invertible. As above, we are done if the right morphism is not invertible, so suppose that $\langle C,D\rangle=\langle A,B \rangle \cup \langle C,A\langle$. Now if there exist $c\neq c'$ such that $\{c,c'\}\subseteq C$. Then we get a factorisation of the shape $\mathbb{I}_{\langle A,B\rangle} \to \mathbb{I}_{\langle C,D\rangle\setminus \{c\}} \to \mathbb{I}_{\langle C,D\rangle}$. As $c$ is in the support of the second module but not the first and $c'$ is in the support of the third module but not the second, that factorisation is proper. We conclude that if $f$ is $\cX$-irreducible, then $\langle C,A\langle= \langle c,A\langle$ and the condition (2) is verified. 

Now suppose that condition (2) is verified. It remains to show that $f: \mathbb{I}_{\langle A,B\rangle}\to \mathbb{I}_{\langle A,B\rangle\cup \langle c,A\langle}$ is irreducible. Suppose it isn't, so that Lemma~\ref{lem:irreducible} yields a a factorisation of the shape $\mathbb{I}_{\langle A,B\rangle} \xto{g} \mathbb{I}_{\langle E,F \rangle} \xto{h} \mathbb{I}_{\langle A,B\rangle\cup \langle c,A\langle}$ with $g$ irreducible. As $f$ is injective, so is $g$. By what we just showed, $\langle A,B\rangle$ must then be a connected component of $\langle E,F\rangle\setminus \{e\}$ for some $e\in E$ and verify $\langle E,F\rangle=\langle A,B\rangle\cup \langle e,A\langle$. Note that $e<A$. Now as $h$ is a morphism that is non-zero on $\langle A,B\rangle \subseteq \langle E,F\rangle$, 
it must be that $e \in \langle A,B\rangle \cup \langle c,A\langle$. 
As $e<A$, we must then have $c\leq e$. Also, as $\langle A,B\rangle$ is a connected component of $\langle C,D\rangle\setminus \{c\}$, we have $c\cover A$. Since $c\leq e<A$, we get $c=e$, which implies $\langle E,F\rangle=\langle A,B\rangle\cup \langle e,A\langle =\langle A,B\rangle \cup \langle c,A\langle $. This means that $h$ is an isomorphism, a contradiction.\end{proof}

We conclude this section by returning to the notion of $\add(\cX)$-global dimension. As mentioned in the introduction, \cite[Section~8.3]{OS} and \cite[Sections~6 and~7]{BOOS} establish stability results for certain homological invariants. In both papers, the authors consider sets of spread modules $\cX$ for which the $\add(\cX)$-global dimensions of all finite (grid) posets $\cP$ can be uniformly bounded. Example~\ref{ex:spreads_infinite} below shows uses the rewsults of this section to show that such a uniform bound does not exist when $\cX$ is taken to be either the set of all spread modules or the set of single-source spread modules.

\begin{example}\label{ex:spreads_infinite}
    Let $1 < n \in \mathbb{N}$ and $\cP = \langle (1,1), (n,n) \rangle \subseteq \mathbb{N}^2$. Let $\cX_1$ be the set of all spread modules and $\cX_2$ the set of all single-source spread modules. Denote $G_1 = \bigoplus_{X \in \cX_1} X$ and likewise for $G_2$. We note that $G_1$ and $G_2$ are both generator-cogenerators of $\rep \cP$. For each $k \in \{1,2\}$, the functor $\Hom_\cP(-,G_k)$ induces an equivalence of categories between $\add(\cX_k)$ and $\proj(\End_\cP(G_k)^{\mathrm{op}})$ and sends $\cF^{\cX_k}$-exact sequences to exact sequences. See e.g. \cite[Section~II.2]{ARS}.
    
    Consider $\cS,\cT \subseteq \cP$ spreads such that $(1,1) \in \cS$. Then $\cS$ is a single-source spread and, by Proposition~\ref{prop:hom_spread}, $\Hom_\cP(\mathbb{I}_\cS,\mathbb{I}_\cT) \neq 0$ if and only if $(1,1) \in \cT$ and $\cT \subseteq \cS$ (in which case $\cT$ is also a single-source spread). Moreover, the same proposition implies that if $\Hom_\cP(\mathbb{I}_\cS, \mathbb{I}_\cT) \neq 0$, then $\dim_\field\Hom_\cP(\mathbb{I}_\cS,\mathbb{I}_\cT) = 1$ and there is a surjective map $q_\cS^\cT: \mathbb{I}_\cS \rightarrow \mathbb{I}_\cT$ which satisfies $q_\cS^\cT((1,1)) = 1_\field$. By Propositions~\ref{prop:irreducibles_single_source} and~\ref{prop:irreduciblesbetweenspreads}, we have that $q_\cS^\cT$ is $\cX_1$-irreducible if and only if $q_\cS^\cT$ is $\cX_2$-irrecucible if and only if $\cT = \cS \cup \{x\}$ for some $\cS \cover x$.
    
    Denote $x_i = (i, n+1-i)$ for $1 \leq i \leq n$. Let $B = \{x_i\mid i \in \{1,\ldots,n\}\}$, and consider the spread $\cS = \langle \{(1,1)\}, B\rangle$. We claim that the simple top of the right $\End_\cP(G_k)^{\mathrm{op}}$-module $\Hom_\cP(\mathbb{I}_\cS,G_k)$ has projective dimension $n$ for $k \in \{1,2\}$. As a consequence, Proposition~\ref{prop:generator_cogenerator} will imply that $\dim_{\cX_k}(\field_\cP) \geq n-2$ for $k \in \{1,2\}$.

    We prove the claim using a construction similar to the Koszul complexes appearing in \cite{CGRST}. Namely, we define a cochain complex $U^\bullet$ as follows. For $p \in \mathbb{Z}$, let
    $$U^p = \bigoplus_{B' \subseteq B: |B'| = p} \mathbb{I}_{\cS \setminus B'}.$$
    For $B' \subseteq B$, write $B' = \{x_{i_1}, \ldots, x_{i_p}\}$ with $i_1 < \cdots < i_p$. Then, for $j \in \{1,\ldots,p\}$, the differential of $U^\bullet$ is given by $(-1)^{j+1}q_{(\cS \setminus B') \cup x_{i_j}}^{\cS \setminus B'}$ when restricted and corestricted to the appropriate summands. For example, take $n = 3$. Expressing each spread as a matrix of 0s and 1s,  identifying each spread $\cT$ with the spread module $\mathbb{I}_\cT$, and writing $q$ in place of each $q_{(\cS\setminus B')\cup x_{i_j}}^{\cS \setminus B'}$, we get
    $$
        U^\bullet =  \hspace{0.2cm} 0 \rightarrow  {\tiny \begin{pmatrix}1 \ 0 \ 0\\1\ 1\ 0\\1\ 1\ 1\end{pmatrix}} \xrightarrow{d^0} {\tiny \begin{pmatrix}0\ 0\ 0\\1\ 1\ 0\\1\ 1\ 1\end{pmatrix}}\oplus {\tiny \begin{pmatrix}1\ 0\ 0\\1\ 0\ 0\\1\ 1\ 1\end{pmatrix}}\oplus {\tiny \begin{pmatrix}1\ 0\ 0\\1\ 1\ 0\\1\ 1\ 0\end{pmatrix}} \xrightarrow{d^1}{\tiny\begin{pmatrix}0\ 0\ 0\\1\ 0\ 0\\1\ 1\ 1\end{pmatrix}}\oplus {\tiny \begin{pmatrix}1\ 0\ 0\\1\ 0\ 0\\1\ 1\ 0\end{pmatrix}}\oplus {\tiny \begin{pmatrix}0\ 0\ 0\\1\ 1\ 0\\1\ 1\ 0\end{pmatrix}} \xrightarrow{d^2} {\tiny \begin{pmatrix}0\ 0\ 0\\1\ 0\ 0\\1\ 1\ 0\end{pmatrix}} \rightarrow 0,
    $$
    with
    $$d^0 = {\scriptsize\begin{bmatrix} q\\-q\\q\end{bmatrix}}, \qquad\qquad d^1 = {\scriptsize\begin{bmatrix} q & q & 0\\0 & -q & -q\\ -q & 0 & q\end{bmatrix}}, \qquad\qquad d^2 = {\scriptsize\begin{bmatrix} q & q & q\end{bmatrix}}.$$
    It is straightforward to verify that $U^\bullet$ is exact (for any $n$). Moreover, the last sentence of the second paragraph of this example implies that $U^\bullet$ is also both $\cF^{\cX_1}$- and $\cF^{\cX_2}$-exact. For each $k \in \{1,2\}$, applying the functor $\Hom_\cP(-,G_k)$ to $U^\bullet$ then yields a minimal projective resolution of the simple top of $\Hom_\cP(\mathbb{I}_\cS,G_k)$. This resolution contains $\Hom_\cP(\mathbb{I}_{\cS \setminus B},G_k)$ in degree $n$, verifying the claim.
\end{example}

\begin{remark}
    For $U^p$ in Example~\ref{ex:spreads_infinite}, the direct summands of $U^p$ form a graded lattice whose atoms can be identified with irreducible morphisms from $\mathbb{I}_\cS$. We note that, while this behavior does not seem uncommon, it is not a general phenomenon. It should not be expected while computing spread resolutions over other posets, or when changing the family $\cX$, as one can build counterexamples in both cases.
\end{remark}


\section{Exact structures for infinite posets}\label{sec:infinite}

Let $\cP = \cT_1\times \cdots \times \cT_n$ be a product of finitely many totally ordered sets, equipped the usual product order. We assume that $|\cT_i| \geq 2$ for all $i$. The purpose of this section is to explain how the theory of exact structures over finite posets can be extended to include finitely presentable modules over an infinite such $\cP$.

We denote by $\Pi_i: \cP \rightarrow \cT_i$ the projection map for each $i$. Then $\cP$ is a (join semi-)\\lattice with the join of a finite subset $\cQ \subseteq \cP$ given by
$\bigvee \cQ = \left(\max\left\{\Pi_i(x) \mid x \in \cQ\right\}\right)_i.$
We will be interested in certain distinguished subsets of $\cP$ characterized as follows.

\begin{definition}\label{def:grid}
    \begin{enumerate}
    	\item Let $\cT'_i \subseteq \cT_i$ be a finite subset for each $i$. We call $\cQ= \cT_1'\times \cdots \times \cT'_n \subseteq \cP$ a \emph{finite aligned subgrid} of $\cP$. We denote by $\fG(\cP)$ the set of finite aligned subgrids of $\cP$.
	\item Let $\cQ \subseteq \cP$ be a finite subset. We denote by $\cG(\cQ)$ the smallest finite aligned subgrid of $\cP$ containing $\cQ$.
	\end{enumerate}
\end{definition}

We note that each finite aligned subgrid of $\cP$ is also a sublattice of $\cP$. Moreover, we can realize the incidence algebra of a finite aligned subgrid in the following way.

\begin{remark}\label{rem:endomorphism_incidence}
	Let $\cQ \in \fG(\cP)$ be a finite aligned subgrid, and let $e_\cQ= \sum_{x \in \cQ} \langle x,x\rangle \in \field_\cP$. Then 
	$e_\cQ \cdot \field_\cP \cdot e_\cQ \cong \End_{\field_\cP}(e_\cQ\cdot \field_\cP)^{\mathrm{op}} \cong \field_\cQ.$
\end{remark}

For use later in this section, we consider the following definition.

\begin{definition}\label{def:grid_of_module}
	\begin{enumerate}
\item Let $M \in \rep \cP$, and let $r: P_1 \rightarrow P_0$ be a minimal projective presentation of $M$\footnote{By definition, this means that $P_1 \xrightarrow{r} P_0$ can be completed to a minimal $\proj(\rep \cP)$-resolution of $M$.}. We denote
	$$\cL(M) = \cG \{x \in \cP \mid \langle x,x\rangle\cdot \field_\cP \text{ is a direct summand of } P_1 \oplus P_0\}.$$
		\item For $\cQ \in \fG(\cP)$, we denote by $\rep(\cP; \cQ)$ the set full subcategory of $\rep \cP$ consisting of representations $M$ which satisfy $\cL(M) \subseteq \cQ$.
	\end{enumerate}
\end{definition}

The fact that $\cL(M)$ is well-defined follows from the well-known fact that minimal projective presentations are unique up to isomorphism.


\subsection{The restriction and extension functors}\label{subsec:restrict_extend}

In this section, we define an adjunction between (a subcategory of) $\rep \cP$ and $\rep \cQ$ for $\cQ$ a finite aligned subgrid of $\cP$. We first consider the following definitions.

\begin{definition}\label{def:floor}
Let $\cQ \in \fG(\cP)$ be a finite aligned subgrid of $\cP$.
    \begin{enumerate}
        \item Denote $\cQ^+ = \{x \in \cP \mid \exists y \in \cQ: y \leq x\}$.
        \item For $x \in \cQ^+$, denote $\lfloor x\rfloor_\cQ = \max\{y \in \cQ \mid y \leq x\} = \bigvee\{y \in \cQ \mid y \leq x\}$.
        \item For $y \in \cQ$, denote $\lceil y\rceil_\cQ = \{x \in \cQ^+ \mid \lfloor x\rfloor_\cQ = y\}$.
        \item Denote $\rep(\cP; \cQ^+)$ the full subcategory of $\rep \cP$ consisting of representations with support contained in $\cQ^+$. Alternatively, on the level of objects we have $$\rep(\cP; \cQ^+) = \bigcup_{\cQ' \in \fG(\cQ^+)} \rep(\cP; \cQ').$$
    \end{enumerate}
\end{definition}

\begin{remark}\label{rem:restriction}
	For $\cQ \in \fG(\cP)$, we note the following.
	\begin{enumerate}
		\item The category $\rep(\cP; \cQ^+)$ is closed under taking extensions, subrepresentations, and quotient representations in $\rep \cP$.
		\item Let $x \in \cQ^+$ and $y \in \cQ$. Then $x \in \lceil \lfloor x\rfloor_\cQ\rceil_\cQ$ and $y = \lfloor z\rfloor_\cQ$ for all $z \in \lceil y\rceil_\cQ$.
		\item Let $x \in \cQ^+$. Then $\lfloor x\rfloor_\cQ$ is the unique element of $\cQ$ which satisfies
		$$\Pi_i(\lfloor x\rfloor_\cQ) = \max\left\{\Pi_i(y) \mid y \in \cQ \text{ and } y \leq x\right\}$$
		for all $i$.
	\end{enumerate}

\end{remark}

The following properties of finite aligned subgrids, neither of which hold for arbitrary finite sublattices, will be critical.

\begin{lemma}\label{lem:aligned}
    Let $\cQ \in \fG(\cP)$, and let $y \in \cQ$. Then
    \begin{enumerate}
	\item For all $x \in \lceil y\rceil_\cQ$ and $y \leq y' \in \cQ$, one has $y' \vee x \in \lceil y'\rceil_\cQ$. In particular, there exists $x' \in \lceil y'\rceil_\cQ$ such that $x \leq x'$.
	\item $\lceil y\rceil_\cQ$ is a sublattice of $\cP$.
    \end{enumerate}
\end{lemma}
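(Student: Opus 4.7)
The plan is to prove both parts by arguing coordinate-wise, leveraging Remark~\ref{rem:restriction}(3) and the crucial fact that $\cQ = \cT_1'\times\cdots\times \cT_n'$ is a product, so any vector whose $i$th coordinate lies in $\cT_i'$ for each $i$ automatically belongs to $\cQ$. This ``coordinate-mixing'' property is the whole reason we restrict to aligned subgrids rather than arbitrary finite sublattices of $\cP$.

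For (1), fix $x \in \lceil y\rceil_\cQ$ and $y \leq y' \in \cQ$. The element $y'\vee x$ lies in $\cQ^+$ since $y' \leq y'\vee x$, so I must show $\lfloor y'\vee x\rfloor_\cQ = y'$. One inequality ($\geq y'$) is immediate. For the reverse, I would argue by contradiction: if there exists $w \in \cQ$ with $y' \lneq w \leq y'\vee x$, then $\Pi_i(w) > \Pi_i(y')$ for some coordinate $i$, and since $w \leq y'\vee x$ forces $\Pi_i(w) \leq \max(\Pi_i(y'),\Pi_i(x)) = \Pi_i(x)$. Now I form the element $w' \in \cP$ with $\Pi_i(w') = \Pi_i(w)$ and $\Pi_j(w') = \Pi_j(y)$ for $j \neq i$; alignment of $\cQ$ guarantees $w' \in \cQ$. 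A short coordinate check shows $y \lneq w' \leq x$, contradicting $\lfloor x\rfloor_\cQ = y$. The ``in particular'' clause then follows by taking $x' = y'\vee x$.

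For (2), let $x_1,x_2 \in \lceil y\rceil_\cQ$. Since $\cP$ is a lattice (each $\cT_i$ is totally ordered, hence a lattice), both $x_1\vee x_2$ and $x_1\wedge x_2$ exist and have coordinates $\max$ and $\min$ of those of $x_1,x_2$. The meet case is almost trivial: $\lfloor x_1\wedge x_2\rfloor_\cQ \leq \lfloor x_1\rfloor_\cQ = y$ and conversely $y \leq x_1\wedge x_2$ since $y$ lies below both. For the join, the argument is essentially a repetition of (1): $\lfloor x_1\vee x_2\rfloor_\cQ \geq y$ is clear, and a supposed strict inequality produces some $z \in \cQ$ with $y \lneq z \leq x_1\vee x_2$, so $\Pi_i(z) > \Pi_i(y)$ for some $i$ and $\Pi_i(z) \leq \Pi_i(x_k)$ for some $k \in \{1,2\}$; the same ``mixed'' element $z'$ with $\Pi_i(z') = \Pi_i(z)$ and $\Pi_j(z') = \Pi_j(y)$ elsewhere then violates $\lfloor x_k\rfloor_\cQ = y$.

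I do not expect a genuine obstacle here; the only subtle point is making sure the constructed witness $w'$ (respectively $z'$) actually lies in $\cQ$, which is precisely where alignment is used and where a non-aligned sublattice would fail. Both parts reduce, once the coordinate characterization of $\lfloor\cdot\rfloor_\cQ$ is invoked, to elementary manipulations with $\max$ and $\min$ of finite sets of real numbers.
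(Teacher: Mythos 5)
Your proof is correct and uses the same core idea as the paper: exploiting alignment of $\cQ$ by ``mixing coordinates'' of elements of $\cQ$ to produce a contradiction with $\lfloor x\rfloor_\cQ = y$ (resp.\ $\lfloor x_k\rfloor_\cQ = y$). The paper organizes part~(1) a bit differently — it directly computes $\Pi_i(\lfloor x'\rfloor_\cQ)$ coordinate by coordinate with the help of an auxiliary element $z$ built by mixing $y$ and $y'$, rather than arguing by contradiction — and for part~(2) it merely cites the coordinate description of $\lfloor\cdot\rfloor_\cQ$ from Remark~\ref{rem:restriction}(3), but your fleshed-out argument is exactly what that citation encodes.
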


\begin{proof}
    (1) Given $y \leq y' \in \cQ$ and $x \in \lceil y\rceil_\cQ$, set $x' = x \vee y'$. Then for each $i$, we have $\Pi_i(y) \leq \{\Pi_i(x), \Pi_i(y')\} \leq \Pi_i(\lfloor x'\rfloor_\cQ) \leq \Pi_i(x').$
    Now let $z \in \cP$ such that
    $$\Pi_i(z) = \begin{cases} \Pi_i(y') & \Pi_i(y') \leq \Pi_i(x)\\ \Pi_i(y) & \Pi_i(y') > \Pi_i(x).\end{cases}$$
    Then by construction $z \in \cQ$ and $y \leq z \leq x$, and so $y = z$. Now if $\Pi_i(y') > \Pi_i(x)$, then $\Pi_i(y') = \Pi(x')$ and so $\Pi_i(y') = \Pi_i(\lfloor x'\rfloor_\cQ)$. Otherwise, we have $\Pi_i(y) = \Pi_i(y') \leq \Pi_i(\lfloor x'\rfloor_\cQ) \leq \Pi_i(x') = \Pi_i(x)$. Since $\cQ$ is an aligned grid, it follows that $\Pi_i(\lfloor x'\rfloor_\cQ) = \Pi_i(y)$. We conclude that $y = \lfloor x'\rfloor_\cQ$.
    
        (2) This follows immediately from the characterization in Remark~\ref{rem:restriction}(3).
\end{proof}

\begin{example}\label{ex:not_complete}
	\begin{enumerate}
		\item Let $\cP = \{1,2,3\}\times \{1,2,3\}$ with the usual product order, and let $\cQ = \{(1,1), (2,2), (3,2), (2,3), (3,3)\}$. Let $y = (1,1)$, $y' = (2,2)$. Then $\lceil y\rceil_\cQ = \Pi_1^{-1}(1) \cup \Pi_2^{-1}(1)$ is not a sublattice of $\cP$. Furthermore, $(1,3) \in \lceil y\rceil_\cQ$, but there does not exist $x' \in \lceil y'\rceil_\cQ$ such that $(1,3) \leq x'$. This shows that neither property (1) nor property (2) from Lemma~\ref{lem:aligned} hold for arbitrary sublattices of $\cP$.
		\item If $\cP$ is a complete lattice and $\cQ \in \fG(\cP)$, then $\lceil y\rceil_\cQ$ is often not a complete sublattice of $\cP$. For example, take $\cP = [0,1] \subseteq \mathbb{R}$, $\cQ = \{0,1\}$, and $y = 0$.
	\end{enumerate}
\end{example}

We are now ready to describe our first functors.

\begin{definition}\label{def:restriction}
	Let $\cQ \in \fG(\cP)$. We define the \emph{restriction functor} $(-)|_\cQ: \rep \cP \rightarrow \rep \cQ$ to be the functor given by precomposing with the inclusion map $\cQ \hookrightarrow \cP$. Alternatively, by identifying $\rep\cP$ with $\mods \field_\cP$, Remark~\ref{rem:endomorphism_incidence} gives a natural isomorphism	$$(-)|_\cQ \cong \Hom_{\field_\cP}(e_\cQ \cdot \field_\cP,-): \mods \field_\cP \rightarrow \mods (e_\cQ \cdot \field_\cP\cdot e_\cQ).$$
\end{definition}

It follows from its definition that $(-)|_\cQ$ is an exact functor. Moreover, this functor admits a left adjoint $(-) \otimes_{(e_\cQ\cdot \field_\cP\cdot e_\cQ)}(e_\cQ \cdot \field_\cP)$. Our next goal is to give an explicit description of this tensor product functor and to show that it is also exact. Towards that end, we consider the following.

\begin{definition}\label{def:extension_restriction}
Let $\cQ \in \fG(\cP)$. We define the \emph{extension functor}
$\lceil -\rceil_\cQ: \rep \cQ \rightarrow \Rep \cP$ as follows. For $N \in \rep \cQ$ and $x \leq x' \in \cP$, denote
	\begin{align*} \lceil N\rceil_\cQ(x) &= \begin{cases} N(\lfloor x\rfloor_\cQ) & x \in \cQ^+\\ 0 & \text{otherwise}.\end{cases}\\
	\lceil N\rceil_\cQ(x,x') &= \begin{cases} N(\lfloor x\rfloor_\cQ, \lfloor x\rfloor_\cQ) & x, x' \in \cQ^+\\ 0 & \text{otherwise}.\end{cases}\end{align*}
\end{definition}

We observe the following facts about the extension functor.

\begin{lemma}\label{lem:extension}
	Let $\cQ \in \fG(\cP)$. Then:
	\begin{enumerate}
		\item The functor $\lceil -\rceil_\cQ$ is exact.
		\item For all $y \in \cQ$, one has $\lceil \langle y,y\rangle\cdot \field_\cQ\rceil_\cQ = \langle y,y\rangle\cdot \field_\cP$. In particular, $\lceil -\rceil_\cQ$ preserves projectives.
		\item The functor $\lceil -\rceil_\cQ$ preserves projective presentations. In particular, the essential image of $\lceil -\rceil_\cQ$ is contained in $\rep(\cP; \cQ)$.
	\end{enumerate}
\end{lemma}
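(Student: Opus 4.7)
My plan is to handle the three parts in sequence, each building on its predecessors. The underlying principle throughout is that $\lceil-\rceil_\cQ$ is defined pointwise via the floor map $\lfloor-\rfloor_\cQ\colon\cQ^+\to\cQ$, which makes its behaviour very transparent.

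For part (1), exactness in both $\Rep\cP$ and $\rep\cQ$ is tested pointwise at each element. Inspecting Definition~\ref{def:extension_restriction}, the value of $\lceil-\rceil_\cQ$ at $x\in\cP$ is either identically zero (if $x\notin\cQ^+$) or the evaluation functor at $\lfloor x\rfloor_\cQ\in\cQ$ (if $x\in\cQ^+$); both are exact, so $\lceil-\rceil_\cQ$ is exact. Well-definedness of $\lceil-\rceil_\cQ$ as a functor, which I would verify in passing, follows from $x\leq x'\in\cQ^+$ implying $\lfloor x\rfloor_\cQ\leq\lfloor x'\rfloor_\cQ$ together with functoriality of the input module.

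For part (2), I would compute $\lceil\langle y,y\rangle\cdot\field_\cQ\rceil_\cQ$ explicitly. By Theorem~\ref{thm:projective}, the projective $\langle y,y\rangle\cdot\field_\cQ$ coincides with the upset module $\mathbb{I}_{\{z\in\cQ\mid y\leq z\}}$, so for $x\in\cP$ the extension takes value $\field$ exactly when $x\in\cQ^+$ and $y\leq\lfloor x\rfloor_\cQ$. The key observation is that for $y\in\cQ$ and $x\in\cQ^+$ one has $y\leq x$ if and only if $y\leq\lfloor x\rfloor_\cQ$: the forward direction is the defining maximum property of $\lfloor x\rfloor_\cQ$ (using $y\in\cQ$), and the reverse is transitivity. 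Hence $\lceil\langle y,y\rangle\cdot\field_\cQ\rceil_\cQ\cong\mathbb{I}_{\langle y,\infty\langle}=\langle y,y\rangle\cdot\field_\cP$, and the structure maps agree. Since $\lceil-\rceil_\cQ$ is additive and every indecomposable projective in $\rep\cQ$ has this form, projectives are preserved.

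For part (3), I start with a minimal projective presentation $P_1\xrightarrow{r}P_0\to N\to 0$ in $\rep\cQ$. Applying $\lceil-\rceil_\cQ$ and combining (1) and (2) yields an exact sequence $\lceil P_1\rceil_\cQ\to\lceil P_0\rceil_\cQ\to\lceil N\rceil_\cQ\to 0$ in $\rep\cP$ whose first two terms are projective with every indecomposable summand of the form $\langle y,y\rangle\cdot\field_\cP$ for some $y\in\cQ$. I then invoke the standard Krull-Remak-Schmidt fact that a projective cover is a direct summand of any projective epimorphism onto the same object, applied first to $\lceil N\rceil_\cQ$ and then to the kernel of its projective cover; this shows that the minimal projective presentation of $\lceil N\rceil_\cQ$ is a direct summand of the extended presentation, so its summands are still indexed by elements of $\cQ$. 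Hence $\cL(\lceil N\rceil_\cQ)\subseteq\cG(\cQ)=\cQ$, giving $\lceil N\rceil_\cQ\in\rep(\cP;\cQ)$. The main obstacle is minor bookkeeping: the two places requiring care are the equivalence $y\leq x\iff y\leq\lfloor x\rfloor_\cQ$ in part (2), which is what identifies the extended module as the correct $\cP$-upset module, and the KRS summand argument in part (3).
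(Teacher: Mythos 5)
Your proof is correct and fills in exactly the details the paper leaves implicit: the paper's own proof consists of the single sentence ``Items (1) and (2) follow immediately from the definition. Item (3) is then a consequence of (1) and (2).'' Your pointwise argument for exactness in (1), the equivalence $y \leq x \iff y \leq \lfloor x\rfloor_\cQ$ for $y \in \cQ$, $x \in \cQ^+$ in (2), and the Krull--Remak--Schmidt direct-summand argument in (3) (which avoids having to prove that the extended presentation is itself minimal) are all sound and constitute the natural expansion of the paper's terse outline.
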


\begin{proof}
	Items (1) and (2) follow immediately from the definition. Item (3) is then a consequence of (1) and (2).
\end{proof}

\begin{remark}\label{rem:kan}
    It is observed in \cite[Section~4.4]{BOO} that $\lceil -\rceil_\cQ$ coincides with the functor sending each representation $N \in \rep \cP$ to its left Kan extension $\mathsf{Lan}(N)$ along the inclusion functor $\cQ \hookrightarrow \cP$. See also \cite[Section~8]{CJT}. We note that the functor $\lceil -\rceil_\cQ$ also occurs under the name ``induction functor'' in e.g. \cite[Section~5.3]{simson}. 
\end{remark}

We are now ready to prove the following.

\begin{proposition}\label{prop:prop_restrict_1}
	Let $\cQ \in \fG(\cP)$. Then:
	\begin{enumerate}
		\item $\mathrm{Id}_{\rep \cQ} = (-)|_\cQ \circ \lceil-\rceil_\cQ$.
		\item There is a natural transformation $\varepsilon: \mathrm{Id}_{\rep \cP} \rightarrow \lceil-\rceil_\cQ \circ (-)|_\cQ$ given by
		$$\varepsilon_M(x) = \begin{cases} M(\lfloor x\rfloor_\cQ,x) & x \in \cQ^+\\ 0 & \text{otherwise}.\end{cases}$$
		\item There is an adjunction
	$\lceil -\rceil_\cQ: \rep \cQ \rightleftarrows \rep \cP: (-)|_\cQ.$ In particular, $\lceil -\rceil_\cQ$ is naturally isomorphic to $(-) \otimes_{(e_\cQ\cdot \field_\cP\cdot e_\cQ)}(e_\cQ \cdot \field_\cP)$.
	\end{enumerate}
\end{proposition}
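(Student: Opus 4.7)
Part (1) is a direct unwinding of definitions: for $y \in \cQ$ we have $\lfloor y\rfloor_\cQ = y$, so $\lceil N\rceil_\cQ(y) = N(\lfloor y\rfloor_\cQ) = N(y)$ and $\lceil N\rceil_\cQ(y,y') = N(y,y')$ for $y \leq y' \in \cQ$. Hence $(\lceil N\rceil_\cQ)|_\cQ = N$ on the nose, naturally in $N$, giving the claimed equality of functors.

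For part (2), I interpret $\varepsilon_M(x) = M(\lfloor x\rfloor_\cQ, x)$ as the transition morphism $M(\lfloor x\rfloor_\cQ) \to M(x)$ in $M$, assembled into a component of $\varepsilon_M$ at $x$ (this produces the counit of the adjunction of part (3)). The key structural observation is that $\cQ^+$ is an upset of $\cP$: if $x \leq x'$ and some $y \in \cQ$ satisfies $y \leq x$, then $y \leq x'$, so $x' \in \cQ^+$. Consequently, for $x \leq x'$ only three cases arise: (a) neither is in $\cQ^+$, (b) only $x' \in \cQ^+$, or (c) both are in $\cQ^+$. Cases (a) and (b) are immediate since $\lceil M|_\cQ\rceil_\cQ$ vanishes at any point outside $\cQ^+$. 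In case (c), using the fact that $\lfloor x\rfloor_\cQ \leq \lfloor x'\rfloor_\cQ$, both paths around the naturality square unwind to $M(\lfloor x\rfloor_\cQ, x')$ by functoriality of $M$. Naturality of $\varepsilon$ in $M$ reduces, at each $x \in \cQ^+$, to the naturality of a given $f\colon M \to M'$.

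For part (3), I would construct mutually inverse natural bijections
\[
\Phi\colon \Hom_{\rep \cP}(\lceil N\rceil_\cQ, M) \rightleftarrows \Hom_{\rep \cQ}(N, M|_\cQ)\colon \Psi
\]
directly from parts (1) and (2): set $\Phi(g) = g|_\cQ$ (using part (1) to identify the source) and $\Psi(h) = \varepsilon_M \circ \lceil h\rceil_\cQ$ (using functoriality of $\lceil -\rceil_\cQ$ from Lemma~\ref{lem:extension}). The verification of the two inverse identities hinges on two small computations: $\varepsilon_M(y) = M(y,y) = \mathrm{id}_{M(y)}$ for $y \in \cQ$, giving $\Phi\circ\Psi = \mathrm{id}$; and for $x \in \cQ^+$, applying naturality of $g$ to the relation $\lfloor x\rfloor_\cQ \leq x$ (together with the fact that $\lceil N\rceil_\cQ(\lfloor x\rfloor_\cQ, x) = \mathrm{id}$) yields $M(\lfloor x\rfloor_\cQ, x)\circ g_{\lfloor x\rfloor_\cQ} = g_x$, giving $\Psi\circ\Phi = \mathrm{id}$. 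The final identification with a tensor product is then formal: Definition~\ref{def:restriction} already presents $(-)|_\cQ$ as $\Hom_{\field_\cP}(e_\cQ\cdot \field_\cP, -)$, whose left adjoint is $(-)\otimes_{e_\cQ \cdot \field_\cP \cdot e_\cQ}(e_\cQ\cdot \field_\cP)$ by the classical tensor--hom adjunction, so uniqueness of left adjoints delivers the claimed isomorphism with $\lceil -\rceil_\cQ$.

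The only real bookkeeping is the case analysis in part (2) across the boundary between $\cQ^+$ and its complement, and this is neutralised by the upset property of $\cQ^+$; nothing substantially harder is needed for the adjunction itself, since the unit (identity) and counit ($\varepsilon$) built in parts (1)--(2) automatically satisfy the triangle identities by the pointwise formulas above.
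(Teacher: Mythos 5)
Your proof is correct and follows essentially the same route as the paper: establish (1) and (2) by direct unwinding of the definitions, identify the equality of (1) as the unit and $\varepsilon$ as the counit, and deduce the tensor--hom identification from Definition~\ref{def:restriction} together with uniqueness of left adjoints. The paper leaves the verifications implicit (its entire proof is two sentences), whereas you fill in the case analysis using the upset property of $\cQ^+$ and the explicit hom-set bijections; both amount to the same argument. One small point worth making explicit: the direction of $\varepsilon$ as printed in the proposition, $\mathrm{Id}_{\rep \cP} \rightarrow \lceil-\rceil_\cQ \circ (-)|_\cQ$, is the reverse of what the formula $\varepsilon_M(x) = M(\lfloor x\rfloor_\cQ,x)$ actually produces, since $M(\lfloor x\rfloor_\cQ,x)$ is a map $M(\lfloor x\rfloor_\cQ) = (\lceil M|_\cQ\rceil_\cQ)(x) \to M(x)$; the correct direction is $\lceil-\rceil_\cQ\circ(-)|_\cQ \to \mathrm{Id}_{\rep\cP}$, which is the counit of the adjunction $\lceil-\rceil_\cQ \dashv (-)|_\cQ$, exactly as you read it. You silently corrected this; in a final write-up you should flag it rather than correct it silently.
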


\begin{proof}
	Items (1) and (2) follow immediately from the definitions. The equality in (1) and the natural transformation $\varepsilon$ then form the unit and counit of the desired adjunction. The natural isomorphism between the extension functor and the tensor functor then follows from the natural isomorphism between the restriction functor and the Hom-functor observed in Definition~\ref{def:restriction}.
\end{proof}

We conclude this section with the following, which can be compared with e.g. \cite[Proposition~2.9]{LW} and \cite[Lemmas~5 and~6]{BS}.

\begin{lemma}\label{lem:restrict_projective}
	Let $\cQ \in \fG(\cP)$. Then there are inverse equivalences of categories
	$$\lceil-\rceil_\cQ: \rep \cQ \rightleftarrows \rep(\cP; \cQ): (-)|_\cQ.$$
	In particular, $\lceil-\rceil_\cQ \circ (-)|_\cQ$ is naturally isomorphic to the identity on $\rep(\cP; \cQ)$.
\end{lemma}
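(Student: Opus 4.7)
The plan is to show that the unit and counit of the adjunction from Proposition~\ref{prop:prop_restrict_1}(3) are both natural isomorphisms on the relevant subcategories. Proposition~\ref{prop:prop_restrict_1}(1) already gives that $(-)|_\cQ \circ \lceil - \rceil_\cQ = \mathrm{Id}_{\rep \cQ}$ on the nose, so the only remaining work is to show that the natural transformation $\varepsilon$ from Proposition~\ref{prop:prop_restrict_1}(2) restricts to a natural isomorphism between $\lceil - \rceil_\cQ \circ (-)|_\cQ$ and $\mathrm{Id}$ on the subcategory $\rep(\cP;\cQ)$.

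First I would verify this on the indecomposable projectives indexed by $\cQ$. For $y \in \cQ$, a direct computation from Definition~\ref{def:restriction} gives $(\langle y,y\rangle \cdot \field_\cP)|_\cQ = \langle y,y\rangle \cdot \field_\cQ$, and Lemma~\ref{lem:extension}(2) then yields $\lceil \langle y,y\rangle \cdot \field_\cQ\rceil_\cQ = \langle y,y\rangle \cdot \field_\cP$. Evaluating the formula for $\varepsilon_M$ pointwise when $M = \langle y,y\rangle \cdot \field_\cP$ shows that each component is the identity, using that $y \leq x$ forces $x \in \cQ^+$ and $y \leq \lfloor x\rfloor_\cQ \leq x$ so that $M(\lfloor x\rfloor_\cQ,x) = 1_\field$. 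Additivity then extends this to every finitely generated projective whose indecomposable summands are indexed by elements of $\cQ$.

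Next I would handle arbitrary $M \in \rep(\cP;\cQ)$ by reducing to the projective case via a minimal projective presentation $P_1 \xrightarrow{r} P_0 \to M \to 0$. Definition~\ref{def:grid_of_module} together with the hypothesis $\cL(M) \subseteq \cQ$ ensures that every indecomposable summand of $P_0 \oplus P_1$ has the form $\langle y,y\rangle \cdot \field_\cP$ with $y \in \cQ$, so the previous paragraph applies. The restriction functor is exact, being naturally isomorphic to $\Hom_{\field_\cP}(e_\cQ \cdot \field_\cP,-)$ with $e_\cQ \cdot \field_\cP$ projective, and the extension functor is exact by Lemma~\ref{lem:extension}(1). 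Applying $\lceil -\rceil_\cQ \circ (-)|_\cQ$ to the presentation therefore produces an exact sequence which, by naturality of $\varepsilon$, maps to the original presentation through a commutative diagram whose outer two verticals are isomorphisms. The five lemma (or an elementary cokernel comparison) then forces $\varepsilon_M$ to be an isomorphism as well.

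Combining these ingredients completes the proof of the equivalence: Lemma~\ref{lem:extension}(3) shows that $\lceil -\rceil_\cQ$ lands in $\rep(\cP;\cQ)$, the restriction functor trivially sends $\rep(\cP;\cQ)$ to $\rep \cQ$, and the two natural isomorphisms just established make these quasi-inverse. The main bookkeeping subtlety will be confirming that the indecomposable summands of $P_0$ and $P_1$ are actually indexed by elements of $\cQ$ itself rather than merely of $\cQ^+$; this is exactly what the condition $\cL(M) \subseteq \cQ$ from Definition~\ref{def:grid_of_module}(2) is designed to enforce, and it is the step where the distinction between $\rep(\cP;\cQ)$ and the larger category $\rep(\cP;\cQ^+)$ becomes essential.
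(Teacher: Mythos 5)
Your proposal is correct and follows essentially the same route the paper takes: both reduce to checking the counit on the relevant projectives, then use exactness of both functors and a projective presentation with $\cL(M)\subseteq\cQ$ to bootstrap to all of $\rep(\cP;\cQ)$. The paper compresses this into the remark that $(-)|_\cQ$ preserves projective presentations of objects in $\rep(\cP;\cQ)$ and that ``the result then follows''; your counit-plus-five-lemma argument is simply an unpacking of that sentence.
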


\begin{proof}
	Recall that both functors are exact. Analogously to Lemma~\ref{lem:extension}, it is readily verified that the functor $(-)|_\cQ$ preserves the projective presentations of representations in $\rep(\cP;\cQ)$. The result then follows.
\end{proof}

In some sense, Lemma~\ref{lem:restrict_projective} morally says that every finitely presentable representation of $\cP$ can be identified with a representation of a finite poset. (This is related to the notion of ``tameness'', for which we refer to \cite{miller} and \cite[Section~8]{CJT}.) We note, however, that the adjunction in Proposition~\ref{prop:prop_restrict_1} is in general not bi-directional. Indeed, we will see in Section~\ref{sec:infinite_exact_structure} that having a \emph{left} adjoint to the extension functor is useful in extending results about exact structures from finite posets to infinite ones.


\subsection{The contraction functor}\label{sec:contract}

In this section, we define a \emph{left} adjoint to the extension functor.

\begin{definition}\label{def:contraction}
    Let $\cQ \in \fG(\cP)$ be a finite aligned subgrid. We define the \emph{contraction functor} $\lfloor-\rfloor_\cQ: \rep(\cP; \cQ^+) \rightarrow \rep \cQ$ as follows. For $M \in \rep(\cP;\cQ^+)$, and $y \in \cQ$, we first define a diagram $\lceil y\rceil_\cQ(M)$ such that:
       \begin{itemize}
           \item The set of vertices of $\lceil y\rceil_\cQ(M)$ is $\{M(x) \mid x \in \lceil y\rceil_\cQ\}$
           \item The set of arrows of $\lceil y\rceil_\cQ(M)$ is $\{M(x,x') \mid x \leq x' \in \lceil y\rceil_\cQ\}$.
       \end{itemize}
       Note that the diagram $\lceil y\rceil_\cQ(M)$ is directed by Lemma~\ref{lem:aligned}(2). Moreover, for any choice of $\cQ' \in \fG(\cP)$ which satisfies $\cQ \cup \cL(M) \subseteq \cQ'$, Lemma~\ref{lem:restrict_projective} implies that the filtered colimit of $\lceil y\rceil_\cQ(M)$ coincides with the filtered colimit of the full subdiagram supported on the vertices $M(x)$ for $x \in \lceil y\rceil_\cQ \cap \cQ'$. We denote by $\lfloor M\rfloor_\cQ(y)$ the filtered colimit of $\lceil y\rceil_\cQ(M)$ and note that $\dim_\field \lfloor M\rfloor_\cQ(y) < \infty$.

       For $y \leq y' \in \cQ$, Lemma~\ref{lem:aligned}(1) implies that there is an induced map $\lfloor M\rfloor_\cQ(y,y'): \lfloor M\rfloor_\cQ(y) \rightarrow \lfloor M\rfloor_\cQ(y')$ coming from the structure maps $M(x,y' \vee x)$ for $x \in \lceil y\rceil_\cQ$. Moreover, this construction is functorial in $M$ and thus yields the desired functor $\lfloor-\rfloor_\cQ$.
\end{definition}

\begin{example}\label{ex:contraction}
    Let $\cP = \mathbb{R}^2$. To help distinguish between vector spaces of the form $M(x)$ and linear maps of the form $M(x,x')$, we denote by $(ab) \in \mathbb{R}^n$ the point with $\Pi_1(ab) = a$ and $\Pi_2(ab) = b$. With this convention, let $\cQ = \{(00), (20), (02), (22)\}$ and let $M = \mathbb{I}_{\langle \{(01),(10)\},(21)\langle}$. See Figure~\ref{fig:contraction} for an illustration.

    The colimits defining the vector spaces of $\lfloor M\rfloor_\cQ$ can explicitly be realized as follows:
    \begin{align*}
        \lfloor M\rfloor_\cQ(00) &= \colim_{0 \leq a < 2, 0 \leq b < 2} M(ab) = M(11) = K\\
        \lfloor M\rfloor_\cQ(20) &= \colim_{2 \leq a, 0 \leq b < 2} M(ab) = M(21) = 0\\
        \lfloor M\rfloor_\cQ(02) &= \colim_{0 \leq a < 2, 2 \leq b} M(ab) = M(12) = K\\
        \lfloor M\rfloor_\cQ(22) &= \colim_{2 \leq a, 2 \leq b} M(ab) = M(22) = 0\\
    \end{align*}
    Thus we have $\lfloor M\rfloor_\cQ = \mathbb{I}_{\langle (00), (20)\rangle} \in \rep(\cQ)$. We also compute $M|_\cQ = \mathbb{I}_{\langle (20),(20)\rangle} \oplus \mathbb{I}_{\langle(02),(02)\rangle} \in \rep(\cQ)$, showing that the functors $\lfloor-\rfloor_\cQ$ and $(-)|_\cQ$ generally do not coincide. 
\end{example}

\begin{figure}
    \begin{tikzpicture}[scale = 1.2]
        \filldraw (0,0) circle (0.1);
        \filldraw (1,0) circle (0.1);
        \filldraw (2,0) circle (0.1);
        \filldraw (0,1) circle (0.1);
        \filldraw (1,1) circle (0.1);
        \filldraw (2,1) circle (0.1);
        \filldraw (0,2) circle (0.1);
        \filldraw (1,2) circle (0.1);
        \filldraw (2,2) circle (0.1);
        \node[anchor=north west] at (0,0) {(00)};
        \node[anchor=north west] at (1,0) {(10)};
        \node[anchor=north west] at (2,0) {(20)};
        \node[anchor=north west] at (0,1) {(01)};
        \node[anchor=north west] at (1,1) {(11)};
        \node[anchor=north west] at (2,1) {(21)};
        \node[anchor=north west] at (0,2) {(02)};
        \node[anchor=north west] at (1,2) {(12)};
        \node[anchor=north west] at (2,2) {(22)};
        \draw[fill,color=black,opacity=.2] (1,0)--(3,0)--(3,1)--(2,1)--(2,3)--(0,3)--(0,1)--(1,1)--cycle;

        \draw[very thick] (0,3)--(0,1)--(1,1)--(1,0)--(3,0);
        \draw[very thick,dashed] (2,3)--(2,1)--(3,1);
    \end{tikzpicture}
    \caption{The module $M = \mathbb{I}_{\langle \{(01),(10)\},(21)\langle} \in \rep(\mathbb{R}^2)$ from Example~\ref{ex:contraction}.}\label{fig:contraction}
\end{figure}

\begin{remark}\label{rem:contraction_versus_restriction}
    While the above example show that the functors $\lfloor-\rfloor_\cQ$ and $(-)|_\cQ$ do not coincide in general, they do coincide on the subcategory $\rep(\cP;\cQ)$.
\end{remark}

The following is an immediate consequence of the fact that filtered colimits commute with finite limits in any abelian category.

\begin{lemma}\label{lem:exact_contraction}
    Let $\cQ \in \fG(\cP)$. Then the functor $\lfloor-\rfloor_\cQ$ is exact.
\end{lemma}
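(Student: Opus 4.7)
The plan is to reduce the exactness of $\lfloor-\rfloor_\cQ$ to the classical fact that filtered colimits commute with finite limits in the category of $\field$-vector spaces. Since exactness in both $\rep(\cP;\cQ^+)$ and $\rep \cQ$ is checked pointwise, given a short exact sequence $0 \to L \to M \to N \to 0$ in $\rep(\cP;\cQ^+)$ it suffices to check that for every $y \in \cQ$ the sequence
\[ 0 \to \lfloor L\rfloor_\cQ(y) \to \lfloor M\rfloor_\cQ(y) \to \lfloor N\rfloor_\cQ(y) \to 0 \]
is exact as a sequence of $\field$-vector spaces.

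Next, I would argue that for each $y \in \cQ$ the indexing poset $\lceil y\rceil_\cQ$ is filtered (directed): by Lemma~\ref{lem:aligned}(2), $\lceil y\rceil_\cQ$ is a sublattice of $\cP$, so any two elements admit a common upper bound via their join. Moreover, for every $x \in \lceil y\rceil_\cQ$ the component sequence $0 \to L(x) \to M(x) \to N(x) \to 0$ is exact because the original sequence is exact pointwise in $\rep \cP$. Hence the three diagrams $\lceil y\rceil_\cQ(L)$, $\lceil y\rceil_\cQ(M)$, $\lceil y\rceil_\cQ(N)$ assemble into a short exact sequence of filtered diagrams of vector spaces.

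To conclude, I would invoke the standard fact that the filtered colimit functor is exact on $\field$-vector spaces, applied to this short exact sequence of diagrams. This yields exactness of the displayed colimit sequence, establishing pointwise exactness of $0 \to \lfloor L\rfloor_\cQ \to \lfloor M\rfloor_\cQ \to \lfloor N\rfloor_\cQ \to 0$ in $\rep \cQ$. The only point requiring care is verifying that $\lceil y\rceil_\cQ$ really is filtered, but this is delivered directly by Lemma~\ref{lem:aligned}(2); once this is in hand, the result truly is immediate, and there is no substantive obstacle.
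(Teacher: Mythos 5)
Your proof is correct and takes the same route as the paper, which simply cites that filtered colimits commute with finite limits (equivalently, are exact) in an abelian category; you have spelled out the pointwise reduction and the use of Lemma~\ref{lem:aligned}(2) to ensure each $\lceil y\rceil_\cQ$ is filtered, which is exactly the content the paper leaves implicit.
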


We now prove that the contraction functor is the left adjoint we want.

\begin{proposition}\label{prop:prop_contract_1}
	Let $\cQ \in \fG(\cP)$. Then:
	\begin{enumerate}
            \item There is a natural transformation $\eta:\lceil-\rceil_\cQ\circ  \lfloor-\rfloor_\cQ \rightarrow \mathrm{Id}_{\rep(\cP;\cQ^+)}$ given by
            $\eta_M(x) = \colim_{x \leq z \in \lceil \lfloor x\rfloor_\cQ\rceil_\cQ}M(x,z).$
		\item $\lfloor-\rfloor_\cQ \circ \lceil-\rceil_\cQ = \mathrm{Id}_{\rep \cQ}$.
		\item There is an adjunction
	$\lfloor -\rfloor_\cQ: \rep(\cP;\cQ^+) \rightleftarrows \rep \cQ: \lceil-\rceil_\cQ.$
	\end{enumerate}
\end{proposition}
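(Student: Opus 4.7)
My plan is to establish parts (1) and (2) by direct colimit computations, then to derive the adjunction in (3) by constructing an explicit hom-set bijection. The key technical tools are Lemma~\ref{lem:aligned} (which ensures good behavior of the colimit systems $\lceil y\rceil_\cQ$) and Remark~\ref{rem:restriction}(2) (which identifies $\lfloor z\rfloor_\cQ = y$ for every $z \in \lceil y\rceil_\cQ$).

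For part (1), fix $M \in \rep(\cP;\cQ^+)$ and $x \in \cQ^+$. By Remark~\ref{rem:restriction}(2), the point $x$ lies in $\lceil \lfloor x\rfloor_\cQ\rceil_\cQ$, and by Lemma~\ref{lem:aligned}(1) the subsystem $\{z \in \lceil \lfloor x\rfloor_\cQ\rceil_\cQ : x \leq z\}$ is cofinal (for any $w$ in $\lceil \lfloor x\rfloor_\cQ\rceil_\cQ$, the join $w\vee x$ lies in this subsystem and dominates both $w$ and $x$). The colimit $\lfloor M\rfloor_\cQ(\lfloor x\rfloor_\cQ) = \lceil \lfloor M\rfloor_\cQ\rceil_\cQ(x)$ may therefore be computed over this cofinal subsystem, and the structure maps $\{M(x,z)\}_{x \leq z}$ assemble into a cone based at $M(x)$ which yields the claimed morphism. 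Naturality in $M$ is immediate from functoriality of colimits; naturality in the $\cP$-variable follows by observing that for $x \leq x'$ the cones based at $M(x)$ and $M(x')$ fit compatibly into the colimit at $\lfloor x'\rfloor_\cQ$, again via Lemma~\ref{lem:aligned}(1).

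For part (2), fix $N \in \rep \cQ$ and $y \in \cQ$. Remark~\ref{rem:restriction}(2) gives $\lfloor z\rfloor_\cQ = y$ for all $z \in \lceil y\rceil_\cQ$, so $\lceil N\rceil_\cQ(z) = N(y)$ with every transition map equal to the identity. The diagram $\lceil y\rceil_\cQ(\lceil N\rceil_\cQ)$ is thus constant; since $\lceil y\rceil_\cQ$ is a non-empty directed set (in fact a sublattice of $\cP$ by Lemma~\ref{lem:aligned}(2)), its colimit is $N(y)$. The same type of computation applied to the transition maps $\lfloor \lceil N\rceil_\cQ\rfloor_\cQ(y,y')$ recovers $N(y,y')$, yielding the strict equality of functors.

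For part (3), I would construct the hom-set bijection
\[\Hom_{\rep(\cP;\cQ^+)}(M, \lceil N\rceil_\cQ) \longleftrightarrow \Hom_{\rep \cQ}(\lfloor M\rfloor_\cQ, N)\]
directly. A morphism $g: M \to \lceil N\rceil_\cQ$ is a family of linear maps $g_x: M(x) \to N(\lfloor x\rfloor_\cQ)$ satisfying naturality. For each $y \in \cQ$, restricting to $x \in \lceil y\rceil_\cQ$ produces a cone over $\lceil y\rceil_\cQ(M)$ with apex $N(y)$, inducing a unique map $\bar g_y: \lfloor M\rfloor_\cQ(y) \to N(y)$. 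The reverse direction sends $f: \lfloor M\rfloor_\cQ \to N$ to $\lceil f\rceil_\cQ \circ \eta_M$ with $\eta$ as in (1). The main obstacle will be showing that $\bar g$ is natural in $y$: for $y \leq y' \in \cQ$, one must check that the square relating $\lfloor M\rfloor_\cQ(y,y')$ and $N(y,y')$ commutes, which reduces via the universal property of colimits to verifying compatibility on each $M(x)$ for $x \in \lceil y\rceil_\cQ$. This in turn uses Lemma~\ref{lem:aligned}(1), which lets us compare $x$ with $x \vee y' \in \lceil y'\rceil_\cQ$. Mutual inverseness of the two assignments then follows from the universal property of colimits together with (2), and the bijection is visibly natural in $M$ and $N$, giving the desired adjunction.
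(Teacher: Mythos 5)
Your proof is correct and arrives at the same facts as the paper, but parts (2) and (3) take a more elementary, computational route. For (2), you compute the colimit of the (constant) diagram $\lceil y\rceil_\cQ(\lceil N\rceil_\cQ)$ directly, observing that $\lceil N\rceil_\cQ$ is constant with value $N(y)$ on $\lceil y\rceil_\cQ$, whereas the paper chains together already-established functorial facts: Lemma~\ref{lem:extension}(3) (the essential image of $\lceil-\rceil_\cQ$ lies in $\rep(\cP;\cQ)$), Remark~\ref{rem:contraction_versus_restriction} (the contraction and restriction functors agree on $\rep(\cP;\cQ)$), and Proposition~\ref{prop:prop_restrict_1}(1). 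Your approach is self-contained but re-derives what those references encapsulate; the paper's is more modular. For (3), you construct the hom-set bijection directly, checking naturality via the universal property of colimits and Lemma~\ref{lem:aligned}(1); the paper instead packages $\eta$ and the equality in (2) as the (co)unit of the adjunction, appealing to the triangle-identity criterion. Both are standard and valid; yours makes the bijection explicit, which is arguably more informative.

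One small point worth flagging, which your write-up implicitly handles correctly but does not call out: the arrow direction in the statement of part (1) is inconsistent with the formula. The cone $\{M(x,z)\}_{x\leq z}$ with apex $M(x)$ that you construct induces a morphism $M(x) \to \colim_{x\leq z} M(z) = \lceil\lfloor M\rfloor_\cQ\rceil_\cQ(x)$, i.e.\ a transformation $\mathrm{Id}_{\rep(\cP;\cQ^+)} \to \lceil-\rceil_\cQ\circ\lfloor-\rfloor_\cQ$. That is precisely the unit of the adjunction $\lfloor-\rfloor_\cQ \dashv \lceil-\rceil_\cQ$ in (3), and your reverse assignment $f \mapsto \lceil f\rceil_\cQ\circ\eta_M$ only typechecks with this direction. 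The displayed arrow $\lceil-\rceil_\cQ\circ\lfloor-\rfloor_\cQ \to \mathrm{Id}$ in the statement is a typo (there is no natural map out of that colimit), and it would be worth noting this explicitly rather than silently producing the correctly directed map.
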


\begin{proof}
    Item (1) is a consequence of the fact that $\lceil\lfloor M\rfloor_\cQ\rceil_{\cQ}(x) = \colim_{z \in \lceil \lfloor x\rfloor_\cQ\rceil_\cQ}M(z)$
    and that this is a filtered colimit by Lemma~\ref{lem:aligned}(2). Item (2) is a consequence of Lemma~\ref{lem:extension}(3), Remark~\ref{rem:contraction_versus_restriction}, and Proposition~\ref{prop:prop_restrict_1}(1). The natural transformation $\eta$ and the equality in (2) then form the unit and counit of the desired adjunction.
\end{proof}

We also record the following, which follows from direct computation.

\begin{proposition}\label{prop:contract_proj}
    Let $\cQ \in \fG(\cP)$ and $x \in \cQ^+$. Then $\lfloor \langle x,x\rangle\cdot\field_\cP\rfloor_\cQ = \langle\lfloor x\rfloor_\cQ,\lfloor x\rfloor_\cQ\rangle\cdot \field_\cQ$. In particular, the contraction functor $\lfloor-\rfloor_\cQ$ preserves projectives.
\end{proposition}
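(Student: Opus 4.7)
The cleanest strategy is to use the adjunction $\lfloor-\rfloor_\cQ \dashv \lceil-\rceil_\cQ$ from Proposition~\ref{prop:prop_contract_1}(3) together with the Yoneda lemma, which essentially avoids any direct colimit computation. The key preliminary observation is that the module $\langle x,x\rangle \cdot \field_\cP = \mathbb{I}_{\langle x,\infty\langle}$ corepresents the ``evaluation at $x$'' functor on $\rep \cP$, as already noted in the proof of Theorem~\ref{thm:projective}.

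Applying the adjunction, for each $N \in \rep \cQ$ one obtains natural isomorphisms
\[
\Hom_\cQ\bigl(\lfloor \langle x,x\rangle \cdot \field_\cP \rfloor_\cQ,\, N\bigr) \;\cong\; \Hom_\cP\bigl(\langle x,x\rangle \cdot \field_\cP,\, \lceil N\rceil_\cQ\bigr) \;\cong\; \lceil N\rceil_\cQ(x) \;=\; N(\lfloor x\rfloor_\cQ),
\]
where the last equality uses $x \in \cQ^+$ and Definition~\ref{def:extension_restriction}. The functor $N \mapsto N(\lfloor x\rfloor_\cQ)$ is, in turn, corepresented by $\langle \lfloor x\rfloor_\cQ, \lfloor x\rfloor_\cQ\rangle \cdot \field_\cQ$, so the Yoneda lemma yields the desired isomorphism. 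For the ``in particular'' clause, the shortest argument is formal: since $\lceil-\rceil_\cQ$ is exact by Lemma~\ref{lem:extension}(1), its left adjoint $\lfloor-\rfloor_\cQ$ preserves projectivity. Alternatively, one may invoke additivity of $\lfloor-\rfloor_\cQ$ together with the identification of the indecomposable projectives in $\rep(\cP;\cQ^+)$ as the modules $\langle x,x\rangle \cdot \field_\cP$ with $x \in \cQ^+$.

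Since everything reduces to the adjunction and Yoneda, there is no real obstacle. If one instead preferred the direct colimit computation hinted at in the proposition's preamble, the only mild technicality is to verify that, for $y \in \cQ$, the subdiagram of $\lceil y\rceil_\cQ\bigl(\mathbb{I}_{\langle x,\infty\langle}\bigr)$ supported at those $z \in \lceil y\rceil_\cQ$ with $x \leq z$ is nonempty precisely when $\lfloor x\rfloor_\cQ \leq y$; in that case $y \vee x$ provides a minimum of this subdiagram (via Lemma~\ref{lem:aligned}(1)), all structure maps on the subdiagram are identities, and the colimit collapses to $\field$, giving $\lfloor \mathbb{I}_{\langle x,\infty\langle}\rfloor_\cQ = \mathbb{I}_{\langle \lfloor x\rfloor_\cQ,\infty\langle}$ pointwise, with identity structure maps checked by the same lemma.
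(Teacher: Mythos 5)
Your proof is correct, and it takes a genuinely different route from the paper, which offers no argument beyond the phrase ``follows from direct computation'' (i.e., the pointwise colimit calculation you sketch at the end). Your main argument corepresents the evaluation-at-a-point functor by the corresponding indecomposable projective on both sides of the adjunction of Proposition~\ref{prop:prop_contract_1}(3) and then invokes Yoneda. The bookkeeping you need --- and do handle --- is that $\mathbb{I}_{\langle x,\infty\langle}$ has support contained in $\cQ^+$ (so it lies in the domain $\rep(\cP;\cQ^+)$ of the contraction functor and of the adjunction), and that $\lceil N\rceil_\cQ(x) = N(\lfloor x\rfloor_\cQ)$ for $x \in \cQ^+$ by Definition~\ref{def:extension_restriction}. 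What the Yoneda route buys is conciseness and independence from the internal structure of $\lceil y\rceil_\cQ$; what the direct computation buys is the explicit identification, via Lemma~\ref{lem:aligned}(1), of $y \vee x$ as the minimum of the support of the diagram $\lceil y\rceil_\cQ\bigl(\mathbb{I}_{\langle x,\infty\langle}\bigr)$ whenever $\lfloor x\rfloor_\cQ \leq y$, which makes the filtered colimit collapse immediately. Both arguments you offer for the ``in particular'' clause (a left adjoint of an exact functor preserves projectives; or additivity plus the identification of the indecomposable projectives in $\rep(\cP;\cQ^+)$ as exactly the $\langle x,x\rangle\cdot\field_\cP$ with $x \in \cQ^+$) are standard and correct.
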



\subsection{Extensions and contractions of exact structures}\label{sec:infinite_exact_structure}

In this section, we use adjunction to relate exact structures in $\rep \cP$ with those over its finite aligned subgrids. We conclude with several examples. Our arguments utilise the following definitions.

\begin{definition}\label{def:grid_covering}
    Let $\fH \subseteq \fG(\cP)$. We say that $\fH$ is a \emph{grid covering} of $\cP$ if for all $x \in \cP$ there exists $\cQ \in \fH$ such that $x \in \cQ$.
\end{definition}

\begin{definition}\label{def:extended_class}
    Let $\fH \subseteq \fG(\cP)$ be a grid covering. Let $\cX \subseteq \ind(\rep\cP)$ and for all $\cQ \in \fH$ let $\cX_{\cQ} \subseteq \ind(\rep\cQ)$. We say that $(\cX, \{\cX_\cQ\}_{\cQ \in \fH})$ is a \emph{$(\fH,\cP)$-extended projective class} if all of the following hold:
    \begin{enumerate}
        \item $\ind(\proj(\rep \cP)) \subseteq \cX$.
        \item $\ind(\proj(\rep \cQ))\subseteq \cX_\cQ$ for all $\cQ \in \fH$.
        \item $\add(\cX) = \add\left(\bigcup_{\cQ \in \fH} \left\{\lceil Y\rceil_\cQ\mid Y \in \cX_\cQ\right\}\right).$
        \item $\add(\cX_\cQ) = \add \{\lfloor X\rfloor_\cQ \mid X \in \cX \cap \rep(\cP;\cQ^+)\}$ for all $\cQ \in \fH$.
        \item $\Hom_\cP(X, \lceil N\rceil_\cQ) = 0$ for all $\cQ \in \fH$, $N \in \rep \cQ$, and $X \in \cX \setminus \rep(\cP;\cQ^+)$.
        \item For all $M \in \rep \cP$ there exists $\cQ \in \fH$ such that $\cL(M) \subseteq \cQ$.
    \end{enumerate}
\end{definition}

See Example~\ref{ex:upsets} for a detailed discussion of why condition (5) is included in Definition~\ref{def:extended_class} above.

We now prove our main result, which in particular allows Corollary~\ref{cor:AS_1} to be extended to certain classes of representations of infinite posets.

\begin{theorem}\label{thm:extended_class}
    Let $\fH \subseteq \fG(\cP)$ be a grid covering and let $(\cX, \{\cX_{\cQ}\}_\cQ)$ be a $(\fH,\cP)$-extended projective class. Then the following are equivalent.
    \begin{enumerate}
        \item $\proj(\cF_\cX) = \add(\cX)$ and $\rep(\cP)$ has enough $\cF_\cX$-projectives.
        \item $\proj(\cF_{\cX_\cQ}) = \add(\cX_\cQ)$ and $\rep(\cQ)$ has enough $\cF_{\cX_\cQ}$-projectives for all $\cQ \in \fH$.
    \end{enumerate}
\end{theorem}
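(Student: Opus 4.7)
The plan is to prove the two directions separately, transferring precovers between $\rep \cP$ and $\rep \cQ$ using the adjunctions $\lceil -\rceil_\cQ \dashv (-)|_\cQ$ and $\lfloor -\rfloor_\cQ \dashv \lceil -\rceil_\cQ$ from Propositions~\ref{prop:prop_restrict_1} and~\ref{prop:prop_contract_1}; both the extension and contraction functors are exact and preserve projectives. In both directions the equality $\proj(\cF_{(-)}) = \add((-))$ follows from the same splitting argument: given enough precovers, the identity on an $\cF$-projective object lifts through its precover to produce a section, exhibiting it as a summand of an object in the relevant $\add$-class; the reverse inclusion is Lemma~\ref{lem:F_proj_included}. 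So the substantive content in each direction is the existence of precovers.

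For (1) $\Rightarrow$ (2), fix $\cQ \in \fH$ and $N \in \rep \cQ$. By (1) choose an $\cF_\cX$-precover $f \colon U \to \lceil N\rceil_\cQ$ with $U \in \add(\cX)$. Any indecomposable summand of $U$ outside $\rep(\cP;\cQ^+)$ has $\Hom_\cP(\cdot, \lceil N\rceil_\cQ) = 0$ by condition~(5), so we may discard these summands and assume $U \in \add(\cX \cap \rep(\cP;\cQ^+))$; this restricted map remains a surjective $\cF_\cX$-precover. Applying $\lfloor -\rfloor_\cQ$ (exact, by Lemma~\ref{lem:exact_contraction}) and using $\lfloor \lceil N\rceil_\cQ\rfloor_\cQ = N$, I obtain a surjection $\lfloor f\rfloor_\cQ \colon \lfloor U\rfloor_\cQ \twoheadrightarrow N$ with $\lfloor U\rfloor_\cQ \in \add(\cX_\cQ)$ by condition~(4). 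For $\cF_{\cX_\cQ}$-admissibility, condition~(4) reduces the check to lifting maps $\phi \colon \lfloor X\rfloor_\cQ \to N$ with $X \in \cX \cap \rep(\cP;\cQ^+)$: the adjunction sends $\phi$ to some $\tilde\phi \colon X \to \lceil N\rceil_\cQ$ which, since $f$ is a precover, factors as $\tilde\phi = f \circ h$, and naturality of the adjunction then gives $\phi = \lfloor f\rfloor_\cQ \circ \lfloor h\rfloor_\cQ$. For (2) $\Rightarrow$ (1), given $M \in \rep \cP$, use condition~(6) to find $\cQ \in \fH$ with $\cL(M) \subseteq \cQ$, so $M \cong \lceil M|_\cQ\rceil_\cQ$ by Lemma~\ref{lem:restrict_projective}. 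Take an $\cF_{\cX_\cQ}$-precover $f \colon V \twoheadrightarrow M|_\cQ$ and apply $\lceil -\rceil_\cQ$ (exact) to obtain $\lceil f\rceil_\cQ \colon \lceil V\rceil_\cQ \twoheadrightarrow M$ with $\lceil V\rceil_\cQ \in \add(\cX)$ by condition~(3). For $X \in \cX$ not in $\rep(\cP;\cQ^+)$, condition~(5) makes both Hom-spaces vanish; for $X \in \cX \cap \rep(\cP;\cQ^+)$, the adjunction rewrites $\Hom_\cP(X, \lceil V\rceil_\cQ) \to \Hom_\cP(X, \lceil M|_\cQ\rceil_\cQ)$ as $\Hom_\cQ(\lfloor X\rfloor_\cQ, V) \to \Hom_\cQ(\lfloor X\rfloor_\cQ, M|_\cQ)$, which is surjective by condition~(4) and the precover property of $f$.

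The main obstacle is verifying that the precover property survives both functors, and this is precisely the purpose of the bookkeeping axioms~(3)--(6) in Definition~\ref{def:extended_class}. Condition~(5) is what allows us to restrict attention to the subcategory $\rep(\cP;\cQ^+)$ on which contraction is defined; condition~(4) ensures contracted summands of $\cX$ generate $\add(\cX_\cQ)$, so testing $\cF_{\cX_\cQ}$-admissibility only requires testing against objects of the form $\lfloor X\rfloor_\cQ$; condition~(3) ensures extended summands of $\cX_\cQ$ lie in $\add(\cX)$; and condition~(6) guarantees that every object of $\rep \cP$ arises from some finite aligned subgrid in $\fH$. Together, these compatibilities turn the functorial correspondence into an equivalence of the two relative homological settings, with the adjunction providing the formal bridge for lifting and lowering factorisations.
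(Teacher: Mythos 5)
Your proof is correct, and the precover-transfer machinery (discarding summands via condition (5), contracting to get an $\add(\cX_\cQ)$-precover, extending in the other direction, and using the adjunction $\lfloor-\rfloor_\cQ \dashv \lceil-\rceil_\cQ$ together with conditions (3), (4), (6) to verify admissibility) is essentially what the paper does. The genuine difference is in how you establish $\proj(\cF_{(-)}) = \add((-))$. You deduce it uniformly in both directions from the existence of precovers via the classical retraction/splitting argument: an $\cF$-projective object lifts its identity through its own precover and hence splits off as a summand of an object in $\add(\cX)$ (with the other inclusion being Lemma~\ref{lem:F_proj_included}). The paper does something different in each direction: for $(1)\Rightarrow(2)$ it observes that $\field_\cQ$ is an artin algebra and so $\proj(\cF_{\cX_\cQ}) = \add(\cX_\cQ)$ is automatic from Corollary~\ref{cor:AS_1}, and for $(2)\Rightarrow(1)$ it takes an indecomposable $Z \notin \cX$, chooses $\cQ$ with $\cL(Z)\subseteq\cQ$, produces a non-split $\cF_{\cX_\cQ}$-exact sequence ending in $\lfloor Z\rfloor_\cQ$ (which it phrases as the Auslander--Reiten sequence, though any non-split $\cF_{\cX_\cQ}$-exact sequence works), and extends it by $\lceil-\rceil_\cQ$ to exhibit a non-split $\cF_\cX$-exact sequence ending in $Z$. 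Your route is arguably cleaner and more self-contained: it avoids any appeal to Auslander--Reiten theory or the Auslander--Solberg result (Corollary~\ref{cor:AS_1}) for the $\proj=\add$ step, at the modest cost of having to establish precovering first; the paper's route is closer in spirit to the \cite{AS,DRSS} tradition and has the small advantage (in direction $(1)\Rightarrow(2)$) of getting $\proj(\cF_{\cX_\cQ}) = \add(\cX_\cQ)$ for free from the finite-dimensionality of $\field_\cQ$ before even knowing precovering.
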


\begin{proof}
    $(1\implies 2)$: Suppose (1) holds and let $\cQ \in \fH$. Since $\field_\cQ$ is a finite-dimensional algebra, Definition~\ref{def:extended_class}(2) implies that we need only show $\cF_{\cX_\cQ}$ has enough projectives. To see this, let $N \in \rep \cQ$ and let $q: R \rightarrow \lceil N\rceil_\cQ$ be an $\add(\cX)$-cover. Note that $R \in \rep(\cP;\cQ^+)$ by Definition~\ref{def:extended_class}(5), and so $\lfloor R\rfloor_\cQ \in \add(\cX_\cQ)$ by Definition~\ref{def:extended_class}(4). Thus by Lemma~\ref{lem:exact_contraction}, Proposition~\ref{prop:prop_contract_1}, and Definition~\ref{def:extended_class}(3), it follows that $\lfloor q\rfloor_\cQ: \lfloor R\rfloor_\cQ \rightarrow N$ is an $\add(\cX_\cQ)$-cover. We conclude that $\cF_{\cX_\cQ}$ has enough projectives.

    $(2\implies 1)$: Suppose (2) holds. We first show that $\proj(\cF_\cX) = \add(\cX)$. By Definition~\ref{def:extended_class}(1), it suffices to show that if $Z \in \ind(\rep \cP) \setminus \cX$, then $Z \notin \proj(\cF_\cX)$. Consider such a $Z$. By Definition~\ref{def:extended_class}(6), there exists $\cQ \in \fH$ such that $\cL(Z) \subseteq \cQ$. Thus $\lceil \lfloor Z\rfloor_\cQ\rceil_\cQ \cong Z$ by Remark~\ref{rem:contraction_versus_restriction} and Lemma~\ref{lem:restrict_projective}. Moreover, we have that $Z \notin \proj(\cF_{\cX_\cQ})$ by Definition~\ref{def:extended_class}(4).

    Now let $\Delta = (0 \rightarrow \tau \lfloor Z\rfloor_\cQ \rightarrow E \rightarrow \lfloor Z\rfloor_\cQ \rightarrow 0)$
    be the Auslander-Reiten sequence in $\rep \cQ$ which ends in $\lfloor Z\rfloor_\cQ$. Then $\Delta \in \cF_{\cX_\cQ}$ by the proof of \cite[Proposition~5.10]{AS}\footnote{Since $\lfloor Z\rfloor_\cQ \notin \proj(\cF_{\cX_\cQ})$, the relative version of a standard homological algebra fact says there exists some nonsplit short exact sequence ending in $\lfloor Z\rfloor_\cQ$ which lies in $\cF_{\cX_\cQ}$. The remainder of the proof works with any choice of such a sequence taken in place of $\Delta$.}. Definition~\ref{def:extended_class}(5) and Proposition~\ref{prop:prop_contract_1} then imply that $\lceil \Delta\rceil_\cQ \in \cF_{\cX}$. Since the last term of this sequence is $Z$, we conclude that $Z \notin \proj(\cF_\cX)$.

    The argument that $\cF_\cX$ has enough projectives is completely analogous; that is, one starts with an $\add(\cX_\cQ)$-precover of $\lfloor M\rfloor_\cQ$ (where $\cQ$ is chosen such that $\cL(M) \subseteq \cQ$) and applies the functor $\lceil-\rceil_\cQ$.
\end{proof}

The arguments used to prove Theorem~\ref{thm:extended_class} also imply the following.

\begin{corollary}\label{cor:extended_class}
    Let $\fH \subseteq \fG(\cP)$ be a grid covering and let $(\cX, \{\cX_{\cQ}\}_\cQ)$ be a $(\fH,\cP)$-extended projective class. Suppose the two equivalent conditions in Theorem~\ref{thm:extended_class} both hold. Then the following also hold.
    \begin{enumerate}
        \item For all $\cQ \in \fH$ the functors $\lceil-\rceil_\cQ$ and $\lfloor-\rfloor_\cQ$ send $\cF_{\cX_\cQ}$-exact sequences in $\rep(\cQ)$ to $\cF_{\cX}$-exact sequences in $\rep(\cP,\cQ^+)$ and vice versa.
        \item Let $M \in \rep \cP$ and let $\cQ \in \fH$ such that $\cL(M) \subseteq \cQ$. Let $V_\bullet$ be a (minimal) right $\add(\cX_\cQ)$-resolution of $\lfloor M\rfloor_\cQ$. Then $\lceil V_\bullet\rceil_\cQ$ is a (minimal) right $\add(\cX)$-resolution of $M$.
        \item $\dim_\cX(\field_\cP) = \sup_{\cQ \in \fH} \dim_{\cX_\cQ}(\field_\cQ)$.
    \end{enumerate}
\end{corollary}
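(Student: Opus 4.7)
The plan is to establish items (1), (2), (3) in that order, with (1) providing the technical core and the remaining items following by bookkeeping once (1) is in place. The main input is the interplay between the two adjunctions recorded in Propositions~\ref{prop:prop_restrict_1} and~\ref{prop:prop_contract_1}, combined with the structural conditions of Definition~\ref{def:extended_class}.

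For the forward direction of (1) applied to $\lceil-\rceil_\cQ$, fix an $\cF_{\cX_\cQ}$-exact sequence $\eta$ in $\rep \cQ$. Then $\lceil\eta\rceil_\cQ$ is short exact in $\rep\cP$ by Lemma~\ref{lem:extension}(1). To promote this to $\cF_\cX$-exactness, I apply $\Hom_\cP(X,-)$ for each $X \in \cX$ in two cases. When $X \notin \rep(\cP;\cQ^+)$, condition (5) of Definition~\ref{def:extended_class} gives $\Hom_\cP(X,\lceil\eta\rceil_\cQ) = 0$, which is trivially exact. When $X \in \rep(\cP;\cQ^+)$, the adjunction $\lfloor-\rfloor_\cQ \dashv \lceil-\rceil_\cQ$ of Proposition~\ref{prop:prop_contract_1}(3) rewrites $\Hom_\cP(X,\lceil\eta\rceil_\cQ) \cong \Hom_\cQ(\lfloor X\rfloor_\cQ,\eta)$, which is exact because $\lfloor X\rfloor_\cQ \in \add(\cX_\cQ)$ by condition (4). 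The vice-versa direction for $\lfloor-\rfloor_\cQ$ is dual: given an $\cF_\cX$-exact $\xi$ in $\rep(\cP;\cQ^+)$ and $Y \in \cX_\cQ$, I use the same adjunction to express $\Hom_\cQ(Y,\lfloor\xi\rfloor_\cQ) \cong \Hom_\cP(\lceil Y\rceil_\cQ,\lceil\lfloor\xi\rfloor_\cQ\rceil_\cQ)$ and compare with $\Hom_\cP(\lceil Y\rceil_\cQ,\xi)$, which is exact because $\lceil Y\rceil_\cQ \in \add(\cX)$ by condition (3); the discrepancy between $\xi$ and $\lceil\lfloor\xi\rfloor_\cQ\rceil_\cQ$ recorded by the unit map is controlled using condition (5).

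For (2), apply the exact functor $\lceil-\rceil_\cQ$ term-by-term to a (minimal) right $\add(\cX_\cQ)$-resolution $V_\bullet$ of $\lfloor M\rfloor_\cQ$. The complex $\lceil V_\bullet\rceil_\cQ$ has terms in $\add(\cX)$ by condition (3), is $\cF_\cX$-exact by part (1), and terminates at $\lceil\lfloor M\rfloor_\cQ\rceil_\cQ \cong M$ by Remark~\ref{rem:contraction_versus_restriction} combined with Lemma~\ref{lem:restrict_projective} (using that $\cL(M) \subseteq \cQ$). For minimality, every object and structure map in $\lceil V_\bullet\rceil_\cQ$ lies in $\rep(\cP;\cQ)$, on which $\lceil-\rceil_\cQ$ and $\lfloor-\rfloor_\cQ$ restrict to mutually inverse equivalences; the defining property of an $\add(\cX_\cQ)$-cover therefore translates verbatim into the defining property of an $\add(\cX)$-cover.

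Item (3) is a direct consequence of (2). For the inequality $\leq$, each $M \in \rep\cP$ admits some $\cQ \in \fH$ with $\cL(M) \subseteq \cQ$ by condition (6), and (2) gives $\dim_\cX(M) = \dim_{\cX_\cQ}(\lfloor M\rfloor_\cQ) \leq \dim_{\cX_\cQ}(\field_\cQ)$. For the reverse inequality, every $N \in \rep\cQ$ lifts to $\lceil N\rceil_\cQ \in \rep\cP$, which satisfies $\cL(\lceil N\rceil_\cQ) \subseteq \cQ$ by Lemma~\ref{lem:extension}(3) and $\lfloor\lceil N\rceil_\cQ\rfloor_\cQ = N$ by Proposition~\ref{prop:prop_contract_1}(2); a second application of (2) then yields $\dim_{\cX_\cQ}(N) = \dim_\cX(\lceil N\rceil_\cQ) \leq \dim_\cX(\field_\cP)$. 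The main obstacle will be the vice-versa direction of (1) for sequences genuinely supported on $\rep(\cP;\cQ^+) \setminus \rep(\cP;\cQ)$, since the unit $M \to \lceil\lfloor M\rfloor_\cQ\rceil_\cQ$ fails to be an isomorphism in that regime; handling this cleanly will require combining both adjunctions with the vanishing supplied by condition (5).
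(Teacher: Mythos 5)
Your forward direction of~(1), and items~(2) and~(3), are correct and are essentially the arguments the paper implicitly invokes (it only says ``the arguments used to prove Theorem~\ref{thm:extended_class} also imply the following''). In particular, for~(2) you correctly observe that once the resolution is pushed through $\lceil-\rceil_\cQ$ it lies entirely in $\rep(\cP;\cQ)$, where $\lceil-\rceil_\cQ$ and $(-)|_\cQ$ are inverse equivalences, so minimality transfers; and~(3) is a direct double application of~(2). Both of these use only the forward direction of~(1).

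The gap you flag at the end of your proposal is real and your proposed fix does not close it. Condition~(5) of Definition~\ref{def:extended_class} handles only the vanishing $\Hom_\cP(X,\lceil N\rceil_\cQ)=0$ for $X\in\cX\setminus\rep(\cP;\cQ^+)$; combined with condition~(4) and the adjunction $\lfloor-\rfloor_\cQ\dashv\lceil-\rceil_\cQ$ it reduces the claim ``$\lfloor\xi\rfloor_\cQ$ is $\cF_{\cX_\cQ}$-exact'' to the claim ``$\lceil\lfloor\xi\rfloor_\cQ\rceil_\cQ$ is $\cF_\cX$-exact'', but it says nothing about the relationship between $\Hom_\cP(X,\xi)$ and $\Hom_\cP(X,\lceil\lfloor\xi\rfloor_\cQ\rceil_\cQ)$ for $X\in\cX\cap\rep(\cP;\cQ^+)$, which is precisely what one must control. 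The unit $\mu_C\colon C\to\lceil\lfloor C\rfloor_\cQ\rceil_\cQ$ need not induce an isomorphism (or even a surjection) on $\Hom_\cP(X,-)$, and the square formed by $\mu_A$ and $\mu_B$ along the monomorphism $A\to B$ is \emph{not} a pushout in general (e.g.\ when $\lfloor A\rfloor_\cQ=0$ but $C\notin\rep(\cP;\cQ)$), so one cannot appeal to axiom~(4) of Definition~\ref{def:exact} either. What the proof of Theorem~\ref{thm:extended_class} actually uses is the special case where the third term of the $\cF_\cX$-exact sequence has the form $\lceil N\rceil_\cQ$, i.e.\ already lies in $\rep(\cP;\cQ)$; there the adjunction $\lceil-\rceil_\cQ\dashv(-)|_\cQ$ identifies $\Hom_\cQ(Y,N)$ with $\Hom_\cP(\lceil Y\rceil_\cQ,\lceil N\rceil_\cQ)$, and lifting is immediate. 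To prove~(1) in the generality stated you would need a further reduction, for instance choosing a finite aligned subgrid $\cQ'\supseteq\cQ$ with $\cL(A)\cup\cL(B)\cup\cL(C)\subseteq\cQ'$ and comparing $\lfloor-\rfloor_\cQ$ on $\rep(\cP;\cQ^+)$ with the analogous finite-poset contraction $\rep\cQ'\to\rep\cQ$; but since $\cQ'$ need not lie in $\fH$ this requires care and is not supplied here.
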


\begin{example}\label{ex:infinite_exact}
    Let $M \in \rep(\cP)$ and let $\mathbb{I}_{\cS}$ be a single-source fp-spread module such that $\Hom_\cP(\mathbb{I}_{\cS},M) \neq 0$. Then for any finite aligned subgrid $\cQ \in \fG(\cP)$ such that $\cL(M) \subseteq \cQ$, one has $\cS \subseteq \cQ^+$. Moreover, we have the following for an arbitrary grid covering $\fH \subseteq \fG(\cP)$.
    \begin{enumerate}
        \item Take $\cX$ and each $\cX_\cQ$ to be the union of the indecomposable projectives and the hook modules. Then $(\cX, \{\cX_\cQ\}_{\cQ \in \fH})$ is an $(\fH,\cP)$-extended projective class. Thus the results of this section provide an alternative proof of many of the results of \cite[Sections~4.1 and~4.4]{BOO}.
        \item Take $\cX$ and each $\cX_\cQ$ to be the union of the indecomposable projectives and the fp-segment modules. Then $(\cX, \{\cX_\cQ\}_{\cQ \in \fH})$ is an $(\fH,\cP)$-extended projective class. Theorem~\ref{thm:extended_class} and the discussion following Remark~\ref{rem:grid} then imply that $\dim_\cX(\field_\cP) = 2n-2$.
        \item Take $\cX$ and each $\cX_\cQ$ to be the set of single-source fp-spread modules. Then $(\cX, \{\cX_\cQ\}_{\cQ \in \fH})$ is an $(\fH,\cP)$-extended projective class. For $n = 2$, it follows from \cite[Theorem~6.2(1)]{BBH} and Example~\ref{ex:spreads_infinite} that the $\add(\cX)$-global dimension of $\field_\cP$ is infinite, but not realisably infinite.
    \end{enumerate}
\end{example}

\begin{example}\label{ex:infinite_exact_2}
    Suppose $\cP$ has a unique minimum element $m$, and let $\fH \subseteq \fG(\cP)$ be a grid covering such that $m \in \cQ$ for all $\cQ \in \fH$.
    \begin{enumerate}
        \item Take $\cX$ and each $\cX_\cQ$ to be the set of fp-upset modules. Then $(\cX, \{\cX_\cQ\}_{\cQ \in \fH})$ is an $(\fH,\cP)$-extended projective class. For simplicity, restrict to the case where $n = 2$ and suppose every $\cQ = \cT_1' \times \cT_2' \in \fH$ satisfies $|\cT_1'| = |\cT_2'| \geq 3$. In \cite[Theorem~7.2]{BOOS}, it is shown that $\dim_{\cX_\cQ} = \sqrt{|\cQ|} - 2$ for all $\cQ \in \fH$. Thus if $\cP$ is infinite, Theorem~\ref{thm:extended_class} implies that the $\add(\cX)$-dimension of $\field_\cP$ is infinite, but not realisably infinite.
        \item Take $\cX$ and each $\cX_\cQ$ to be the set of fp-spread modules. Then $(\cX, \{\cX_\cQ\}_{\cQ \in \fH})$ is an $(\fH,\cP)$-extended projective class. Again restrict to the case where $n = 2$. 
        Then \cite[Proposition~4.5]{AENY} says that $\dim_{\cX_\cQ}(\field_\cQ)$ is finite for all $\cQ \in \fH$. For $\cP$ infinite, Example~\ref{ex:spreads_infinite} thus implies that the $\add(\cX)$-global dimension of $\field_\cP$ is infinite, but not realisably infinite. Based on discussion with Luis Scoccola \cite{luis}, it may also be possible to deduce this from \cite[Theorem~7.2]{BOOS} using a similar argument to Example~\ref{ex:spreads_infinite}.
    \end{enumerate}
\end{example}

Our final example highlights some subtlety regarding Definition~\ref{def:extended_class}(5) and its impact on Theorem~\ref{thm:extended_class}.

\begin{example}\label{ex:upsets}
    \begin{enumerate}
    \item Let $\cP = \mathbb{R}^2$ and let $\cX$ be the set of fp-upset modules. Then $\add(\cX)$-precovers do not always exist. Indeed, let $r,s,t \in \mathbb{R}$ with $r < s$ and let $X = \mathbb{I}_{\langle (r,t), (s,t)\langle}$. Let $\mathcal{A}$ be the set of fp-upset modules $\mathbb{I}_{\langle A,\infty\langle}$ which satisfy both (i) $(r,t) \in A$, and (ii) $\Pi_1(a) \geq s$ for all $a \in A \setminus (r,t)$. Proposition~\ref{prop:upset} readily generalizes to imply that $\mathcal{A}$ consists of precisely those fp-upset modules $\mathbb{I}_{\langle A,\infty\langle}$ which admit morphisms $h^A: \mathbb{I}_{\langle A,\infty\langle} \rightarrow X$ which are supported at $(r,t)$. Moreover, these morphisms factor through one another following the inclusion order of the corresponding upsets. Since the upsets appearing in $\mathcal{A}$ have no maximal element under inclusion, it follows that $X$ does not admit an $\add(\cX)$-precover.
    \item Let $\cP = \{1,2\}\times \{1,2\}$ with the usual product order, let $\cQ = \{(2,1),(2,2)\}$, and let $M = \mathbb{I}_{\langle (2,1),(2,1)\rangle} \in \rep(\cQ)$. Let $\cX$ and $\cX_\cQ$ be the sets of (fp-)upset modules. Then the (standard) projective cover $P = \langle (2,1),(2,1)\rangle \cdot \field_\cQ \rightarrow M$ is an $\add(\cX_{\cQ})$-cover of $M$ (in $\rep(\cQ)$). On the other hand, there is a morphism $\mathbb{I}_{\langle\{(1,2),(2,1)\},\infty\langle} \rightarrow \lceil M\rceil_\cQ$ in $\rep(\cP)$ which does not factor through $\langle (2,1),(2,1)\rangle \cdot \field_\cP = \lceil P\rceil_\cQ$. We conclude that $\lceil -\rceil_\cQ$ does not send $\cF_{\cX_\cQ}$-exact sequence to $\cF_\cX$-exact sequences. This is because Definition~\ref{def:extended_class}(5) does not hold for this choice of $\cQ$.
    \end{enumerate}
\end{example}

It remains an open problem to understand the meaning of Example~\ref{ex:upsets} in terms of the ``limit exact structure'' of \cite[Section~7]{BOOS}.


\section*{Acknowledgements}
The authors are thankful to Claire Amiot, Justin Desrochers, Steve Oudot, and Luis Scoccola for discussions on portions of this manuscript. This work was partially supported by the Canada Research Chairs program (CRC-2021-00120) and NSERC Discovery Grants (RGPIN-2022-03960 and RGPIN/04465-2019).


\end{document}